\theoremstyle{plain}
\newtheorem{definition}{Definition}[section]
\newtheorem{remark}[definition]{Remark}
\newtheorem{theorem}[definition]{Theorem}
\newtheorem{lemma}[definition]{Lemma}
\newtheorem{proposition}[definition]{Proposition}
\DeclareMathOperator{\Pol}{Pol}
\DeclareMathOperator{\ext}{ext}
\DeclareMathOperator{\Card}{Card}
\DeclareMathOperator{\img}{Im}
\begin{document}
%\includepdf[pages=\startpage-\endpage,
%scale=\scalefactor]{\infilename}

\title{Meet-reducible submaximal clones determined by two central relations.}

\author[YANNICK L.~T. JEUFACK, Luc E.~F. Diekouam and ETIENNE R.~A. Temgoua]{Y.~L.~T. JEUFACK, L. Diekouam and E.~R.~A. Temgoua}
\email{retemgoua@yahoo.fr; tenkius2000@yahoo.fr;
lucdiekouam@yahoo.fr}
\address{Department of Mathematics\\Ecole Normale Sup\'erieure\\
University of Yaound\'e-1\\P.O.Box 47 Yaound\'e\\CAMEROON\\
Department
of mathematics\\ Ecole Normale Sup\'erieure\\ University of Maroua\\
P.O. Box 55 Maroua\\ Cameroon}

\subjclass[2010]{Primary: 08A40; Secondary: 08A02, 18B35.}
\keywords{central relations, meet-reducible, submaximal, clones.}

\begin{abstract}  Let $\rho$ and $\sigma$ be two central relations on a finite set $A$.
It is known from Rosenberg's classification theorem (1965) that the
clones $\Pol\rho$ and $\Pol\sigma$ which consists of all operations
on $A$ that preserve $\rho$  respectively $\sigma$  are among the
maximal clones on $A$. In this paper, we find all central relations
$\sigma$ such that the clone $\Pol\{\rho, \sigma\}$ is a maximal
subclone of $\Pol\rho$  where $\rho$ is a fixed central relation.
\end{abstract}
\maketitle
\section{Introduction}
 In 1941, E.L. Post presented
the complete description of the countably many clones on 2 elements.
It turned out that, all such clones are finitely generated and the
lattice of these clones is countable. The structure of the lattice
of clones on finitely many (but more than 2) elements is more
complex and is of the cardinality $2^{\aleph_{0}}$. For $k\geq 3$,
not much is known about the structure of the lattice of clones in
spite of the efforts made by many researchers in this area.
Therefore, every new piece of information  is considered valuable.
Indeed, it would be very interesting to know the clone lattice on
the next level (below the maximal clones) and even a partial
description will shed more light onto its structure. The complete
description of all submaximal clones is known only for the 2-element
case and the 3-element case (see \cite{Lau 1,Lau 3,Post1}), however
the result in (\cite{Lau 1}) and many result in the literature on
clones including those discussed in (\cite{Lau 1,Lau 3,Rosenb 1,temg
1,Temg 2}), require intensive knowledge of submaximal clone (below
certain maximal clones) on arbitrary finite sets. Clone theory is
considered to be very important because of its use to understand
universal algebras.

In \cite[Chapter~17]{Lau 3}, D. Lau presented all submaximal clones
of the clone $\Pol\rho$ where $\rho$ is a unary central relation on
an arbitrary finite set.
 In this paper, we characterize the five types of central relations  $\sigma$
 such that the clone of the form
$\Pol\{\rho, \sigma\}$ is  covered by $\Pol\rho$, where $\rho$ is a
$h$-ary $(h\geq 2)$ central relation on a given finite set.
Moreover, we give a result which will help anyone to decide whether
$\Pol\{\rho, \sigma\}$ is a submaximal clone where $\rho$ and
$\sigma$ are two central relations.

This paper consists of four sections. After this Introduction, in
which we motivated this research and we announce the five types of
central relations to be characterized in the paper, the second
section provides the reader with necessary notions and notations. It
is followed by the section dedicated to the description of the five
types of central relations that are in the focus of the study. It is
also the place where the main result of the paper is stated, and
proved in one direction (the sufficient condition). The final
section contains the proof that the given conditions are also
necessary.

\section{Preliminaries}
In this section, we provide the reader with some basic notions and notations; for more details
 the reader can see (\cite{Lau 3,A. Szendrei,temg 1,Temg 2}).

Let $A$ be a fixed finite set with $k$ elements, $n$ and $h$ be
integers
 such that $1\leq n,h$. An $n$-ary operation on $A$ is a function  $f:A^{n}\rightarrow A$. We will use the notation
$\mathcal{O}_{A}^{(n)}$ for the set of all $n$-ary operations on
$A$, and  $\mathcal{O}_{A}$ for the set
$\underset{n\geq1}{\cup}\mathcal{O}_{A}^{(n)}$ of all finitary
operations on A. For  $1\leq i\leq n$, the $i$-th projection is the
operation $\pi_{i}^{(n)}: A^{n}\rightarrow A,
(a_{1},\ldots,a_{n})\mapsto a_{i}$. For arbitrary positive integers
$m$ and $n$, there is a one-to-one correspondence between the
functions $f: A^{n}\rightarrow A^{m}$ and the $m$-tuples
$\mbox{\boldmath{$f$}}=(f_{1},\ldots,f_{m})$ of functions $f_{i}:
A^{n}\rightarrow A$ (for $i=1,\ldots,m$) via $f\mapsto
\mbox{\boldmath{$f$}}=(f_{1},\ldots,f_{m})$ with
$f_{i}=\pi_{i}^{(m)}\circ f$ for all $i=1,\ldots,m$. In particular,
$\pi^{(n)}=(\pi_{1}^{(n)},\ldots,\pi_{n}^{(n)})$ corresponds to the
identity function $f: A^{n}\rightarrow A^{n}$. From now on, we will
identify each function $f: A^{n}\rightarrow A^{m}$ with the
corresponding $m$-tuples
$\mbox{\boldmath{$f$}}=(f_{1},\ldots,f_{m})\in
(\mathcal{O}_{A}^{(n)})^{m}$ of $n$-ary operations. Using this
convention, the composition of two functions
$\mbox{\boldmath{$f$}}=(f_{1},\ldots,f_{m}): A^{n}\rightarrow A^{m}$
and $\mbox{\boldmath{$g$}}=(g_{1},\ldots,g_{p}): A^{m}\rightarrow
A^{p}$ can be described as follows:
$\mbox{\boldmath{$g$}}\circ\mbox{\boldmath{$f$}}=(g_{1}\circ\mbox{\boldmath{$f$}},\ldots,g_{p}\circ\mbox{\boldmath{$f$}})=
(g_{1}(f_{1},\ldots,f_{m}),\ldots,g_{p}(f_{1},\ldots,f_{m}))$ where
$g_{i}(f_{1},\ldots,f_{m})(\mbox{\boldmath{$a$}})=g_{i}(f_{1}(\mbox{\boldmath{$a$}}),\ldots,f_{m}(\mbox{\boldmath{$a$}}))$
for all $\mbox{\boldmath{$a$}}\in A^{n}$ and $1\leq i\leq p$.

A clone on $A$ is a subset $\mathcal{C}$ of $\mathcal{O}_{A}$ that
contains the projections and is closed  under composition; that is
$\pi_{i}^{(n)}\in \mathcal{C}$ for all $n\geq 1$ and $1\leq i\leq
n$, and $g\circ \mbox{\boldmath{$f$}}\in \mathcal{C}^{(n)}$ whenever
$g\in \mathcal{C}^{(m)}$ and $\mbox{\boldmath{$f$}}\in
(\mathcal{C}^{(n)})^{m}$ (for $m,n\geq 1$). The clones on $A$ form a
complete lattice $\mathcal{L}_{A}$ under inclusion. Therefore, for
each set $F\subseteq \mathcal{O}_{A}$ of operations, there exists a
smallest clone that contains $F$, which will be denoted by $\langle
F\rangle$ and will be called clone generated by $F$. Clones can also
be described via invariant relations.
 An $h$-ary relation on $A$ is a subset of $A^{h}$. For an $n$-ary operation $f\in
\mathcal{O}_{A}^{(n)}$ and an $h$-ary relation $\rho$  on $A$, we
say that $f$ preserves $\rho$ (or $\rho$ is invariant under $f$, or
$f$ is a polymorphism of $\rho$) if whenever $f$ is applied
coordinatewise to $h$-tuples from $\rho$, the resulting $h$-tuple
belongs to $\rho$ i.e., for all $(a_{1,i},\ldots,a_{h,i})\in\rho,
i=1,\ldots,n$,
$(f(a_{1,1},\ldots,a_{1,n}),f(a_{2,1},\ldots,a_{2,n})\ldots,f(a_{h,1},\ldots,a_{h,n}))\in\rho$.
 For any family $\mathcal{R}$ of (finitary) relations on $A$, the set
$\Pol \mathcal{R}$ of all operations $f\in \mathcal{O}_{A}$ that
preserve each relation in $\mathcal{R}$  is easily seen to be a
clone on $A$. Moreover,
 if $A$ is finite, then it is a well-known fact that every clone
on $A$ is of the form $\Pol \mathcal{R}$ for some family
$\mathcal{R}$ of relations on $A$. If $\mathcal{R}=\{\rho\}$, we
write $\Pol\rho$ for $\Pol\{\rho\}$. Let $\rho\subseteq A^{h}$; for
an integer $m>1$ and
$\mbox{\boldmath{$a$}}_{i}=(a_{1,i},\ldots,a_{m,i})\in A^{m},1\leq
i\leq h$, we will write
$(\mbox{\boldmath{$a$}}_{1},\ldots,\mbox{\boldmath{$a$}}_{h})\in\rho$
if for all $j\in\{1,\ldots,m\}, (a_{j,1},\ldots,a_{j,h})\in\rho$.

Since $A$ is finite, it is well known that every clone on $A$ other
than $\mathcal{O}_{A}$ is contained in a maximal clone. We say that
an $h$-ary relation  $\rho$ on $A$  is totally reflexive (reflexive
for $h=2$) if $\rho$ contains the $h$-ary relation $\iota_{A}^{h}$
defined by
\[
\iota_{A}^{h}=\{(a_{1},\ldots,a_{h})\in A^{h}| \exists
i,j\in\{1,\ldots,h\}: i\neq j~\mbox{and}~ a_{i}= a_{j}\},
\]
and is totally symmetric (symmetric if $h=2$) if $\rho$ is invariant
under any permutation of its coordinates. If $\rho$ is  totally
reflexive and totally symmetric, we define the center of $\rho$,
denoted by $C_{\rho}$, as follows:
\[ C_{\rho}=\{a\in
A:(a,a_{2},\ldots,a_{h})\in\rho~\mbox{for all}~a_{2},\ldots,a_{h}\in
A\}.\]

We say that $\rho$ is a  central relation  if $\rho$  is totally
reflexive, totally symmetric and has a nonvoid center which is a
proper subset of $A$. It is known of course that whenever $\rho$ and
$\sigma$ are distinct nontrivial central relations, $\Pol\rho$ and
$\Pol\sigma$ are distinct maximal clones. Let $\rho$ be a binary
relation on $A$, $\rho$ is an equivalence relation if $\rho$ is
symmetric, reflexive and transitive; $\rho$ is non-trivial
 if $\rho\neq A^{2}$ and $\rho\neq \{(a,a): a\in A\}$.
For instance, it is nice for us to give the following remark useful
to justify some inclusions between clones.

\begin{remark}
Let $R$ be a set of relations on a finite set $A$. If $f\in \Pol R$,
then $f\in\Pol [R]$ where $[R]$ is the relational clone generated by
$R$.
\end{remark}

For two clones $\mathcal{C}$ and $\mathcal{D}$ on $A$, we say that
$\mathcal{C}$ is maximal in $\mathcal{D}$ if $\mathcal{D}$ covers
$\mathcal{C}$ in $\mathcal{L}_{A}$, we also say that $\mathcal{C}$
is submaximal if $\mathcal{C}$ is maximal in a clone $\mathcal{D}$
and $\mathcal{D}$ is a maximal clone on $A$. For a maximal clone
$\mathcal{D}$, there are two types of clones $\mathcal{C}$ being
maximal in $\mathcal{D}$: $\mathcal{C}$ is meet-reducible if
$\mathcal{C}=\mathcal{D}\cap \mathcal{F}$ for a maximal clone
$\mathcal{F}$ distinct from $\mathcal{D}$ (but not necessarily
unique) and $\mathcal{C}$ is meet-irreducible if it is not
meet-reducible.

From now on we assume that we are working on the set
$E_{k}=\{0,1,\ldots,k-1\}$ where $k>1$. We will denote by
$\underline{h}$ the set $\{1,\ldots,h\}$ and by  $S_{h}$ the
 set of all permutations on $\underline{h}$ for all integers $h>1$.
 For any integer $ 2\leq h\leq k$, we denote by $\iota_{k}^{h}$ the set $\iota_{E_{k}}^{h}$.
It is well known (see \cite{Lau 3}) that the Slupecki clone
$\Pol\iota_{k}^{k}$ is a maximal clone.

\section{The five types of $\sigma$ such that $\Pol\{\rho, \sigma\}$ is maximal in $\Pol\rho$.}

In this section, we give the definition of those types of central
relations $\sigma$ such that $\Pol\{\rho, \sigma\}$ is maximal in
$\Pol\rho$. We recall some classical constructions. If $\alpha$ and
$\beta$ are two $h$-ary relations on $E_{k}$, the intersection of
the relations $\alpha$ and $\beta$, denoted by $\alpha\cap\beta$, is
the set: \[\alpha\cap\beta=\{(a_{1},\ldots,a_{h})\in E_{k}^{h}:
(a_{1},\ldots,a_{h})\in\alpha\wedge
(a_{1},\ldots,a_{h})\in\beta\}.\] If $\alpha$ is an $h$-ary relation
($h\geq 2$), we denote by $\alpha_{1}$, the relation
\[\{(x_{1},\ldots,x_{h})\in E_{k}^{h}:\exists u\in
E_{k}, \forall 1\leq i< h,
(x_{1},\ldots,x_{i-1},u,x_{i+1}\ldots,x_{h})\in\alpha)\}.\] Since
$\alpha\cap\beta$ and $\alpha_{1}$ belong to $[\{\alpha,\beta\}]$,
we have $\Pol\{\alpha,\beta\}\subseteq\Pol(\alpha\cap\beta)$ and
$\Pol\{\alpha, \beta\}\subseteq\Pol\alpha_{1}$.
 \begin{definition}
Let $h\geq 1$, $\rho$ and $\sigma$ be two $h$-ary relations on
$E_{k}$ such that $\rho\neq\sigma$.
\begin{itemize}
\item[(1)] We say that $\rho$ and $\sigma$ are comparable if $\rho\subseteq\sigma$ or
$\sigma\subseteq\rho$.
\item[(2)] A subset $B$ of  $E_{k}$ is called a $\rho$-chain if $B^{h}$ is a subset of $\rho$.
\item[(3)] A $\rho$-chain $B$ is called a maximal $\rho$-chain if $B$ is not a proper
subset of another  $\rho$-chain $D$.
\end{itemize}
\end{definition}
It is easy to check that for all $\rho$-chains $B$, there is a
maximal $\rho$-chain $D$ such that $B\subseteq D$. In the following
lines, $\rho$ is an $h$-ary central relation and $\sigma$ is an
$s$-ary central relation on $E_{k}$.
 If $2\leq h<s$, we consider the $s$-ary relations $\lambda$ and $\gamma'$ defined on $E_{k}$ by
$\lambda=\{(a_{1},\ldots,a_{s})\in E_{k}^{s}:
(a_{1},\ldots,a_{h})\in\rho\}~\text{ and }
~\gamma'=\lambda\cap\sigma$. If $\sigma$ is a unary central relation
and $h=2$, we consider the binary relation $\gamma$ defined on
$E_{k}$ by: $\gamma=\{(a,b)\in E_{k}^{2}: \exists u\in\sigma,
(a,u)\in\rho\wedge (b,u)\in\rho\}.$

Here we state the main result of this paper.
\begin{theorem}\label{theo 1}
Let $k\geq 3$, $\rho$ be a central relation on $E_{k}$ with arity
$h\geq 2$ and $\sigma$  an $s$-ary central relation on $E_{k}$ such
that $\sigma\neq\rho$ . $\Pol\{\rho, \sigma\}$ is a maximal clone
below $\Pol\rho$ if and only if $\sigma$ fulfils one of the
following five conditions:
\begin{itemize}
\item[(I)] $\sigma$ is unary and $C_{\rho}\cap\sigma\neq
\emptyset$;
\item[(II)] $\sigma$ is unary, $\rho=\{(a,b)\in E_{k}^{2}: \exists u\in \sigma, (a,u)\in\rho\wedge (b,u)\in\rho\}$
and for all maximal $\rho$-chains $B$, $B\cap\sigma\neq \emptyset$;
\item[(III)] $s=h$, $\rho$ and $\sigma$ are comparable (i.e.
$\rho\subsetneq\sigma$ or $\sigma\subsetneq\rho)$;
\item[(IV)] $2\leq s<h$ and $C_{\rho}\cap C_{\sigma}\neq\emptyset$;
\item[(V)] $2\leq h< s$ and $\lambda=\{(a_{1},\ldots,a_{s})\in E_{k}^{s}:(a_{1},\ldots,a_{h})\in\rho\}\subsetneq\sigma$.
\end{itemize}
\end{theorem}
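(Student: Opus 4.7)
The plan is to prove both directions of the theorem case by case through the five conditions (I)--(V), which partition according to the arity configuration between $\rho$ (arity $h\geq 2$) and $\sigma$ (arity $s$): $s=1$ gives cases (I) and (II); $s=h$ gives (III); $2\leq s<h$ gives (IV); and $2\leq h<s$ gives (V). Throughout, the key flexibility is the remark in Section~2: $\Pol\{\rho,\sigma\}\subseteq \Pol\tau$ for any $\tau\in[\{\rho,\sigma\}]$, so we may freely pass to the derived relations $\rho\cap\sigma$, $\alpha_{1}$, and the specific auxiliaries $\lambda,\gamma,\gamma'$ defined just before the theorem. Note that strict inclusion $\Pol\{\rho,\sigma\}\subsetneq\Pol\rho$ is automatic from the stated fact that distinct nontrivial central relations give distinct maximal clones, so only the maximality claim requires work.

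For the sufficient direction, in each case the plan is to fix an arbitrary $f\in\Pol\rho\setminus\Pol\{\rho,\sigma\}$, extract from the failure of $\sigma$-preservation a witness tuple $(\mathbf{a}_{1},\ldots,\mathbf{a}_{s})\in\sigma$ with $(f(\mathbf{a}_{1}),\ldots,f(\mathbf{a}_{s}))\notin\sigma$, and then show that composing $f$ with unary maps taking values in $C_{\rho}$ (which lie in $\Pol\{\rho,\sigma\}$ when chosen compatibly with $\sigma$) yields a generator of $\Pol\rho$ over $\Pol\{\rho,\sigma\}$. In case (I), the hypothesis $C_{\rho}\cap\sigma\neq\emptyset$ is precisely what supplies unary maps that preserve $\sigma$ while moving tuples around; in (III) the comparability $\rho\subsetneq\sigma$ or $\sigma\subsetneq\rho$ gives a direct relation between generators; in (IV) a common centre element in $C_{\rho}\cap C_{\sigma}$ plays an analogous anchoring role; in (V) the strict containment $\lambda\subsetneq\sigma$ provides tuples of $\sigma\setminus\lambda$ that are used as leverage.

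For the necessary direction, assume $\Pol\{\rho,\sigma\}$ is maximal in $\Pol\rho$ and case-split on $(h,s)$. For each derived relation $\tau\in[\{\rho,\sigma\}]$ we have $\Pol\{\rho,\sigma\}\subseteq\Pol\{\rho,\tau\}\subseteq\Pol\rho$, and maximality collapses this chain. Choosing $\tau=\gamma$ (in the unary case with $h=2$) forces either $\rho=\gamma$ (giving (II)) or $\sigma\cap C_{\rho}\neq\emptyset$ (giving (I)); choosing $\tau=\lambda$ and $\tau=\gamma'$ (when $h<s$) forces $\lambda\subsetneq\sigma$ (giving (V)); choosing $\tau=\rho\cap\sigma$ (when $s=h$) forces comparability (giving (III)); and a parallel argument using $\sigma$ together with a suitable $\alpha_{1}$-style construction disposes of the $s<h$ case, yielding (IV). The contrapositive (if $\sigma$ fits none of (I)--(V) then a proper clone sits strictly between $\Pol\{\rho,\sigma\}$ and $\Pol\rho$) can be verified by exhibiting, in each failing configuration, a concrete central relation $\tau$ with $\Pol\{\rho,\sigma\}\subsetneq\Pol\{\rho,\tau\}\subsetneq\Pol\rho$.

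The main obstacle will be the maximality step of the sufficient direction in cases (II) and (V), where the hypothesis ties $\sigma$ to $\rho$ in a subtle way ($\rho=\gamma$ in (II), $\lambda\subsetneq\sigma$ in (V)) and producing the required generator of $\Pol\rho$ from an arbitrary $f\in\Pol\rho\setminus\Pol\{\rho,\sigma\}$ will demand delicate tuple manipulations involving $\sigma$-tuples sitting outside $\rho$, respectively outside $\lambda$. Once this step is completed, the matching necessity arguments should become largely a matter of extracting the structural hypothesis from the derived-relation analysis, which is comparatively routine.
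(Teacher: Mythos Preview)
Your overall architecture matches the paper's: sufficiency via a generation argument for each of the five types, necessity via derived relations producing an intermediate clone. However, your sufficiency sketch is missing the central idea. Saying that one composes $f$ with unary maps landing in $C_{\rho}$ and thereby ``yields a generator of $\Pol\rho$'' is not a mechanism: to prove $\langle \Pol\{\rho,\sigma\}\cup\{g\}\rangle=\Pol\rho$ you must show that an \emph{arbitrary} $m$-ary $h\in\Pol\rho$ lies in the generated clone. The paper does this in two steps. First (Lemma~3.5), for every ``bad'' $s$-tuple configuration $(\mbox{\boldmath{$c$}}_{1},\ldots,\mbox{\boldmath{$c$}}_{m})$ it builds, from $g$ and small-range maps in $\Pol\{\rho,\sigma\}$, an operation $f_{\mbox{\boldmath{$c$}}_{1},\ldots,\mbox{\boldmath{$c$}}_{m}}$ that violates $\sigma$ on that specific tuple. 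Second (proof of Proposition~3.4), it collects all these into a list $f_{1},\ldots,f_{q}$, defines $\ext(\mbox{\boldmath{$x$}})=(\mbox{\boldmath{$x$}},f_{1}(\mbox{\boldmath{$x$}}),\ldots,f_{q}(\mbox{\boldmath{$x$}}))$, and constructs an $(m+q)$-ary $\tilde{H}\in\Pol\{\rho,\sigma\}$ with $h=\tilde{H}\circ\ext$. The point is that the extra coordinates $f_{i}(\mbox{\boldmath{$x$}})$ guarantee that any $s$-tuple of $\ext$-images already fails $\sigma$, so $\tilde{H}$ can be defined freely there. In type~(II) this is exactly where the maximal $\rho$-chain hypothesis enters: one must pick, for each $\mbox{\boldmath{$y$}}\in\sigma^{m+q}$, a value $u_{\mbox{\boldmath{$y$}}}\in\sigma$ that is $\rho$-compatible with every $h(\mbox{\boldmath{$x$}})$ for which $(\ext(\mbox{\boldmath{$x$}}),\mbox{\boldmath{$y$}})\in\rho$, and such a $u_{\mbox{\boldmath{$y$}}}$ exists precisely because every maximal $\rho$-chain meets $\sigma$. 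Your plan does not expose any of this, and without the $\ext/\tilde{H}$ coding device the sufficiency argument does not go through.

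On the necessity side, your plan is closer to the paper's but still incomplete. For $s=1$ you only treat $h=2$ via $\gamma$; when $h\geq 3$ the relation $\gamma$ is not even defined, and the paper instead introduces towers of relations $\alpha_{n}$ and $\beta_{j(h-1)+1}$ (Lemmas~4.11--4.13) to squeeze out $C_{\rho}\cap\sigma\neq\emptyset$. Similarly, for $h<s$ you cannot jump directly to $\lambda$ and $\gamma'$: the paper first establishes $C_{\rho}\cap C_{\sigma}\neq\emptyset$ through an auxiliary family $\theta_{t}$ (Lemmas~4.6--4.9), and only then does the $\gamma'=\lambda\cap\sigma$ argument (Lemma~4.10) force $\lambda\subsetneq\sigma$. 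Without the center-intersection step, there is no reason the maps you would build to separate $\Pol\{\rho,\sigma\}$ from $\Pol\gamma'$ preserve $\gamma'$. These are not cosmetic omissions; they are the technical heart of the respective cases.
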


The proof of Theorem \ref{theo 1}  is organized as follows. The
sufficiency of conditions is shown in Proposition \ref{prop 1} and
the necessity is shown in Propositions \ref{prop 2}, \ref{prop 3},
\ref{prop 4} and \ref{prop 5}.
\begin{definition}
Let $l\in\{I,II,III,IV,V\}$. We say that $\sigma$ is of type $l$ if
$\sigma$ satisfies the condition  $(l)$ of Theorem \ref{theo 1}.
\end{definition}
Now we will prove the sufficiency of the conditions in Theorem \ref{theo 1}.

\begin{proposition}\label{prop 1}
Let $k\geq 3$, $\rho$ be an h-ary central relation ($2\leq h$) and
$\sigma$ an s-ary  central relation ($s\geq 1$). If $\sigma$ is of
type $l\in\{I,II,III,IV,V\}$, then $\Pol\{\rho, \sigma\}$ is maximal
in $\Pol\rho$.
\end{proposition}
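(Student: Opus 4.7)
The plan is to show that for each type $l\in\{I,\ldots,V\}$, any clone $\mathcal{C}$ with $\Pol\{\rho,\sigma\}\subsetneq\mathcal{C}\subseteq\Pol\rho$ must coincide with $\Pol\rho$. Since $\Pol\{\rho,\sigma\}=\Pol\rho\cap\Pol\sigma$, the strict inclusion supplies an operation $f\in\mathcal{C}\cap\Pol\rho$ with $f\notin\Pol\sigma$; equivalently, there is an $n\times s$ matrix whose rows lie in $\sigma$ while the $s$-tuple obtained by applying $f$ coordinate-wise falls outside $\sigma$. I would fix such a witness once and for all and then, for an arbitrary $g\in\Pol\rho$, build $g$ by a term over $\Pol\{\rho,\sigma\}\cup\{f\}$. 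The auxiliary operations needed from $\Pol\{\rho,\sigma\}$ will act as controlled substitutions or projections on $\sigma$ and on $C_\rho$, so most of the work is to identify those operations and to stitch them together with $f$.

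For types (I) and (II), where $\sigma$ is unary, the witness is a tuple $(a_{1},\ldots,a_{n})\in\sigma^{n}$ with $f(a_{1},\ldots,a_{n})\notin\sigma$. In type (I), a fixed element $c\in C_{\rho}\cap\sigma$ plays a dual role: it lies in $\sigma$ (so can be substituted into the witness) and it is central for $\rho$ (so any $h$-tuple in which $c$ appears is automatically in $\rho$). This allows me to align arbitrary arguments of $g$ with the structure of $f$ and of $\sigma$. In type (II), the hypothesis $\rho=\{(a,b)\in E_{k}^{2}:\exists u\in\sigma,\ (a,u),(b,u)\in\rho\}$ is precisely $\rho=\gamma$ in the notation of the preliminaries, which forces $\gamma\in[\{\rho,\sigma\}]$ and $\Pol\{\rho,\sigma\}=\Pol\{\gamma,\sigma\}$; the maximal-$\rho$-chain condition $B\cap\sigma\neq\emptyset$ then furnishes, inside each maximal $\rho$-chain, a representative in $\sigma$ on which the failure of $f$ at $\sigma$ can be localised and then spread to reach arbitrary elements of $\Pol\rho$.

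For type (III), the comparability of $\rho$ and $\sigma$ implies that one of the two contains the other, so the witness for $f\notin\Pol\sigma$ either involves tuples of $\sigma\setminus\rho$ or tuples of $\rho\setminus\sigma$; in both sub-cases the witness is rigid enough that generation of $\Pol\rho$ by $\Pol\{\rho,\sigma\}\cup\{f\}$ can be established by direct substitution of projections. For type (IV) with $2\leq s<h$, a common central element $c\in C_{\rho}\cap C_{\sigma}$ allows one to pad $s$-tuples from $\sigma$ into $h$-tuples that automatically lie in $\rho$, so any $\rho$-preserving operation can be recovered from $f$ together with elementary operations in $\Pol\{\rho,\sigma\}$ that fix $c$. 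For type (V) with $2\leq h<s$ and $\lambda\subsetneq\sigma$, the auxiliary $s$-ary relation $\lambda$ is definable from $\rho$ alone, hence $\lambda\in[\{\rho\}]$ and every member of $\Pol\rho$ preserves $\lambda$; the failure of $f$ on $\sigma$ must therefore be on a tuple of $\sigma\setminus\lambda$, and combining such a tuple with witnesses from $\lambda\cap\sigma=\gamma'$ produced by $\Pol\{\rho,\sigma\}$ yields enough flexibility to reach all of $\Pol\rho$.

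The main obstacle will be case (II): its defining identity reconstructs $\rho$ out of $\sigma$ and an auxiliary relation in $[\{\rho,\sigma\}]$, but turning this set-theoretic identity into a term-generating argument requires careful book-keeping on the maximal $\rho$-chains together with an interplay between $C_{\rho}$ and the unary $\sigma$. The other four cases follow a reasonably uniform template once the right auxiliary relation in $[\{\rho,\sigma\}]$ has been identified and a single non-preserving witness for $f$ is fixed.
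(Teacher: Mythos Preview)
Your high-level strategy is the right one and matches the paper: pick $f\in\Pol\rho\setminus\Pol\sigma$ and show that any $g\in\Pol\rho$ lies in $\langle\Pol\{\rho,\sigma\}\cup\{f\}\rangle$. But the proposal stops short of the one construction that actually makes this work, and several of your type-by-type sketches do not correspond to arguments that can be completed as stated.

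The missing machinery is an \emph{extension trick}. From the single witness $(\mbox{\boldmath{$a$}}_1,\ldots,\mbox{\boldmath{$a$}}_n)\in\sigma^n$ with $f(\mbox{\boldmath{$a$}}_1,\ldots,\mbox{\boldmath{$a$}}_n)\notin\sigma$, the paper first builds, for every suitable configuration $(\mbox{\boldmath{$c$}}_1,\ldots,\mbox{\boldmath{$c$}}_m)$ of $m$-ary inputs, an operation $f_{\mbox{\boldmath{$c$}}_1,\ldots,\mbox{\boldmath{$c$}}_m}\in\langle\Pol\{\rho,\sigma\}\cup\{f\}\rangle$ whose value at that configuration falls outside $\sigma$ (this is done by composing $f$ with constant-like operations in $\Pol\{\rho,\sigma\}$ that send the given rows to the witness rows). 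Collecting these into $S=\{f_1,\ldots,f_q\}$, one defines $\ext(\mbox{\boldmath{$x$}})=(\mbox{\boldmath{$x$}},f_1(\mbox{\boldmath{$x$}}),\ldots,f_q(\mbox{\boldmath{$x$}}))$ and then a single $(m+q)$-ary $\tilde H\in\Pol\{\rho,\sigma\}$ with $\tilde H(\ext(\mbox{\boldmath{$x$}}))=g(\mbox{\boldmath{$x$}})$ and $\tilde H=c$ (a central element) elsewhere. The point of the extra coordinates is precisely that no $s$-tuple of extended inputs can simultaneously land in the ``dangerous'' region of $\sigma$: the tagging by $S$ forces either a repeated row, or a central value, or (in types III/V) a sub-$h$-tuple already in $\rho$. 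Without this mechanism your phrases ``direct substitution of projections'' (type III) and ``pad $s$-tuples into $h$-tuples'' (type IV) do not lead anywhere concrete; in type IV, for instance, the relevant fact is not padding but that $s<h$ forces any operation with $s$-element image to preserve $\rho$ by total reflexivity, which is what makes the row-sending maps lie in $\Pol\{\rho,\sigma\}$.

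Type II is indeed the delicate case, and here your sketch understates what is needed. One cannot simply use a central default value $c\in C_\rho$ for $\tilde H$, because $c\notin\sigma$. Instead, for each $\mbox{\boldmath{$y$}}\in\sigma^{m+q}$ one must choose $\tilde H(\mbox{\boldmath{$y$}})$ inside $\sigma$ \emph{and} compatible with $\rho$ against all genuine extended inputs $\ext(\mbox{\boldmath{$x$}})$ with $(\ext(\mbox{\boldmath{$x$}}),\mbox{\boldmath{$y$}})\in\rho$. The hypothesis $\rho=\gamma$ guarantees that the set $D_{\mbox{\boldmath{$y$}}}=\{g(\mbox{\boldmath{$x$}}):(\ext(\mbox{\boldmath{$x$}}),\mbox{\boldmath{$y$}})\in\rho\}$ is a $\rho$-chain, and the maximal-chain condition then supplies an element of $\sigma$ in a maximal chain containing $D_{\mbox{\boldmath{$y$}}}$; this element is the correct value for $\tilde H(\mbox{\boldmath{$y$}})$. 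Your proposal gestures at the chain condition but does not indicate this construction, which is the heart of the argument.
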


Let $g\in\Pol\rho\setminus\Pol\sigma$ be an $n$-ary operation; then
there exist
$\mbox{\boldmath{$a$}}_{1}=(a_{1,1},\ldots,a_{s,1})$,$\ldots$,
$\mbox{\boldmath{$a$}}_{n}=(a_{1,n},\ldots,a_{s,n})\in\sigma$ such
that
\[g(\mbox{\boldmath{$a$}}_{1},\ldots,\mbox{\boldmath{$a$}}_{n})=(g(a_{1,1},\ldots,a_{1,n}),
\ldots,g(a_{s,1},\ldots,a_{s,n}))\not\in~\sigma.\]

\begin{lemma}\label{lem 1}
Let $m\geq 1$ be an integer, $k\geq 3$ and $\rho$ and $\sigma$ be
central relations of arity $h\geq 2$ and $s\geq 1$, respectively.
Moreover let $g\in\Pol\rho\setminus\Pol\sigma$ and
$\mbox{\boldmath{$a$}}_{1},\ldots,\mbox{\boldmath{$a$}}_{n}\in\sigma$
with
$g(\mbox{\boldmath{$a$}}_{1},\ldots,\mbox{\boldmath{$a$}}_{n})\notin\sigma$.
\begin{itemize}
\item[(i)] If $\sigma$ is of type I or II, then for all $\mbox{\boldmath{$c$}}=(c_{1},\ldots,c_{m})\in\sigma^{m}$
there exists an $m$-ary operation $f_{\mbox{\boldmath{$c$}}}\in
\langle (\Pol\{\rho, \sigma\})\cup\{g\}\rangle$ such that
$f_{\mbox{\boldmath{$c$}}}(\mbox{\boldmath{$c$}})\not\in\sigma$.
\item[(ii)] If $\sigma\subsetneq\rho$ or $\sigma$ is of type IV, then for all $\mbox{\boldmath{$c$}}_{1}=(c_{1,1},\ldots,c_{s,1}),\ldots,\mbox{\boldmath{$c$}}_{m}=(c_{1,m},
\ldots,c_{s,m})\in E_{k}^{s}$
 such that $\mbox{\boldmath{$b$}}_{1}=(c_{1,1},\ldots,c_{1,m}),\ldots,
    \mbox{\boldmath{$b$}}_{s}=(c_{s,1},\ldots,c_{s,m})$ are pairwise distinct elements of $E_{k}^{m}$,
 there exists an $m$-ary operation

 $f_{\mbox{\boldmath{$c$}}_{1},\ldots,\mbox{\boldmath{$c$}}_{m}}\in~\langle (\Pol\{\rho, \sigma\})\cup\{g\}\rangle$ with
$f_{\mbox{\boldmath{$c$}}_{1},\ldots,\mbox{\boldmath{$c$}}_{m}}(\mbox{\boldmath{$c$}}_{1},\ldots,\mbox{\boldmath{$c$}}_{m})\not\in\sigma$.
\item[(iii)] If $\rho\subsetneq \sigma$ or $\sigma$ is of type V, then for all $\mbox{\boldmath{$c$}}_{1}=(c_{1,1},\ldots,c_{s,1}),\ldots,
\mbox{\boldmath{$c$}}_{m}=(c_{1,m},\ldots,c_{s,m})$ such that
$(\mbox{\boldmath{$b$}}_{i_{1}},\ldots,\mbox{\boldmath{$b$}}_{i_{h}})\not\in\rho$
for every $1\leq i_{1}< i_{2}<\cdots<i_{h}\leq s $ (where
$\mbox{\boldmath{$b$}}_{j}=(c_{j,1},\ldots,c_{j,m})$ for $1\leq
j\leq s$), there exists an $m$-ary operation
 $f_{\mbox{\boldmath{$c$}}_{1},\ldots,\mbox{\boldmath{$c$}}_{m}}\in \langle(\Pol\{\rho,\sigma\})\cup\{g\}\rangle$ such that  $f_{\mbox{\boldmath{$c$}}_{1},\ldots,
 \mbox{\boldmath{$c$}}_{m}}(\mbox{\boldmath{$c$}}_{1},\ldots,\mbox{\boldmath{$c$}}_{m})\not\in~\sigma$.
\end{itemize}
\end{lemma}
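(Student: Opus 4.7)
The plan is to build the required $m$-ary operations as compositions $f=g(h_1,\ldots,h_n)$ with $h_1,\ldots,h_n\in\Pol\{\rho,\sigma\}$ chosen so that, when $g$ is applied coordinatewise, one recovers exactly $g(\mathbf{a}_1,\ldots,\mathbf{a}_n)\notin\sigma$ at the prescribed input. All three parts follow this template; only the definition of the $h_i$ and the verification of their preservation properties differ.

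For part (i), $\sigma$ is unary, so each $\mathbf{a}_i$ is a single element $a_i\in\sigma$. I simply take $h_i$ to be the $m$-ary constant $a_i$: it preserves $\sigma$ trivially and preserves $\rho$ since $h\geq 2$ and $\rho$ is totally reflexive, so $(a_i,\ldots,a_i)\in\iota_{k}^{h}\subseteq\rho$. Then $f_{\mathbf{c}}:=g(h_1,\ldots,h_n)$ is the constant operation with value $g(a_1,\ldots,a_n)\notin\sigma$, which disposes of both types I and II at once without ever using the specific structure of those types.

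For parts (ii) and (iii), I first secure an element $u\in C_\rho\cap C_\sigma$. This is immediate in type IV; for $\sigma\subsetneq\rho$ (same arity) one has $C_\sigma\subseteq C_\rho$, for $\rho\subsetneq\sigma$ (same arity) one has $C_\rho\subseteq C_\sigma$, and in type V a direct unpacking of the definition of $\lambda$ yields $C_\lambda=C_\rho$, so $\lambda\subseteq\sigma$ gives $C_\rho\subseteq C_\sigma$. In every remaining case I then set
\[
h_i(\mathbf{x})=\begin{cases} a_{j,i} & \text{if } \mathbf{x}=\mathbf{b}_j \text{ for some } j\in\{1,\ldots,s\},\\ u & \text{otherwise.}\end{cases}
\]
This is well defined because the $\mathbf{b}_j$ are pairwise distinct: by hypothesis in (ii), and in (iii) because two equal $\mathbf{b}_j$'s could be completed to an $h$-tuple with strictly increasing indices lying in $\iota_{k}^{h}\subseteq\rho$, contradicting the assumption.

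To show that $h_i$ preserves $\sigma$ (resp.\ $\rho$), I split on whether all of the $s$ (resp.\ $h$) arguments belong to $\{\mathbf{b}_1,\ldots,\mathbf{b}_s\}$: if some argument falls outside, the corresponding output is $u$, and centrality closes the computation; if every argument is $\mathbf{b}_{\phi(j)}$ for some $\phi$, the output $(a_{\phi(j),i})_j$ has a repetition (hence lies in $\iota_{k}^{s}\subseteq\sigma$ or $\iota_{k}^{h}\subseteq\rho$) whenever $\phi$ does, and is a permutation of $\mathbf{a}_i\in\sigma$ when $\phi$ is a bijection. Preserving $\rho$ requires disposing of the injective subcase: pigeonhole handles $s<h$ (types IV and V), the inclusion $\sigma\subseteq\rho$ handles the subcase $\sigma\subsetneq\rho$ of part (ii), and the no-$h$-subtuple assumption of (iii) makes this subcase vacuous via total symmetry of $\rho$. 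Finally $f_{\mathbf{c}_1,\ldots,\mathbf{c}_m}:=g(h_1,\ldots,h_n)$ lies in $\langle\Pol\{\rho,\sigma\}\cup\{g\}\rangle$, and its value at $(\mathbf{c}_1,\ldots,\mathbf{c}_m)$ is exactly $g(\mathbf{a}_1,\ldots,\mathbf{a}_n)\notin\sigma$. The main obstacle is organising the unified case analysis across (ii) and (iii): locating $u\in C_\rho\cap C_\sigma$ needs each of the five sub-hypotheses separately, and the "no $h$-subtuple" assumption of (iii) must do double duty, first ensuring the $\mathbf{b}_j$'s are distinct and then excluding the injective branch of the $\rho$-preservation check; the remaining details reduce to routine uses of total reflexivity, total symmetry, and the centrality of $u$.
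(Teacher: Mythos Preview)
Your argument is correct and follows the paper's template: constant functions for (i), and for (ii)--(iii) the maps $\mathbf{b}_j\mapsto a_{j,i}$ with a fixed default value elsewhere, composed under $g$. The only variation is the default in part (ii): the paper takes $a_{1,i}$ (so $\mathrm{Im}(h_i)=\{a_{1,i},\ldots,a_{s,i}\}$ and preservation follows directly from $s\le h$, $\mathbf{a}_i\in\sigma$, and total reflexivity/symmetry), whereas you first secure $u\in C_\rho\cap C_\sigma$ via the center inclusions---a touch more work up front, but it lets you run a single construction across (ii) and (iii).

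One expository slip to fix: you write ``pigeonhole handles $s<h$ (types IV and V)'', but type~V has $h<s$, so pigeonhole applies only to type~IV; type~V is covered (as you in fact say in the next clause) by the no-$h$-subtuple hypothesis of (iii).
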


\begin{proof}
i) Let $\mbox{\boldmath{$c$}}=(c_{1},\ldots,c_{m})\in\sigma^{m}$ and
consider the $m$-ary function $f_{\mbox{\boldmath{$c$}}}^{i},1\leq
i\leq n$ defined by
$f_{\mbox{\boldmath{$c$}}}^{i}(\mbox{\boldmath{$x$}})=\mbox{\boldmath{$a$}}_{i}$.
Since $\mbox{\boldmath{$a$}}_{i}\in\sigma$ and $\rho$ is totally
reflexive, we have $f_{\mbox{\boldmath{$c$}}}^{i}\in \Pol\{\rho,
\sigma\}$. Set
$f_{\mbox{\boldmath{$c$}}}=g(f_{\mbox{\boldmath{$c$}}}^{1},\ldots,f_{\mbox{\boldmath{$c$}}}^{n})$;
$f_{\mbox{\boldmath{$c$}}}\in\langle (\Pol\{\rho,
\sigma\})\cup\{g\}\rangle$ and
$f_{\mbox{\boldmath{$c$}}}(\mbox{\boldmath{$c$}})
=g(f_{\mbox{\boldmath{$c$}}}^{1}(\mbox{\boldmath{$c$}}),\ldots,f_{\mbox{\boldmath{$c$}}}^{n}(\mbox{\boldmath{$c$}}))=g(\mbox{\boldmath{$a$}}_{1},\ldots,
\mbox{\boldmath{$a$}}_{n})\not\in\sigma$.

ii) In this case we have $s\leq h$. Let
$\mbox{\boldmath{$c$}}_{1}=(c_{1,1},\ldots,c_{s,1})$,$\ldots$,$\mbox{\boldmath{$c$}}_{m}=(c_{1,m},\ldots,c_{s,m})$
in $ E_{k}^{s}$ such that
$\mbox{\boldmath{$b$}}_{1}=(c_{1,1},\ldots,c_{1,m}),\ldots,\mbox{\boldmath{$b$}}_{s}=(c_{s,1},\ldots,c_{s,m})$
are pairwise distinct elements of $E_{k}^{m}$. For $1\leq i\leq n$,
we consider the $m$-ary function
$f_{\mbox{\boldmath{$c$}}_{1},\ldots,\mbox{\boldmath{$c$}}_{m}}^{i}$
defined by
\[f_{\mbox{\boldmath{$c$}}_{1},\ldots,\mbox{\boldmath{$c$}}_{m}}^{i}(\mbox{\boldmath{$x$}})=
\begin{cases}
a_{j,i} & \text{if }\mbox{\boldmath{$x$}}=\mbox{\boldmath{$b$}}_{j} \text{ for some } 1\leq j\leq s, \\
a_{1,i} & \text{otherwise.}
\end{cases}\]
Since $(a_{1,i},\ldots,a_{s,i})\in\sigma$ and $\sigma\subsetneq\rho$
or $h> s$, we have
$f_{\mbox{\boldmath{$c$}}_{1},\ldots,\mbox{\boldmath{$c$}}_{m}}^{i}\in
\Pol\{\rho,\sigma\}$ for all $1\leq i\leq n$.
 Set $f_{\mbox{\boldmath{$c$}}_{1},\ldots,\mbox{\boldmath{$c$}}_{m}}=g(f^{1}_{\mbox{\boldmath{$c$}}_{1},\ldots,\mbox{\boldmath{$c$}}_{m}},\ldots,f^{n}_{\mbox{\boldmath{$c$}}_{1},\ldots,\mbox{\boldmath{$c$}}_{m}})$.
  We have $f_{\mbox{\boldmath{$c$}}_{1},\ldots,\mbox{\boldmath{$c$}}_{m}}\in \langle (\Pol\{\rho,\sigma\})\cup\{g\}\rangle$ and
\begin{eqnarray*}
% \nonumber to remove numbering (before each equation)
  f_{\mbox{\boldmath{$c$}}_{1},\ldots,\mbox{\boldmath{$c$}}_{m}}(\mbox{\boldmath{$c$}}_{1},\ldots,\mbox{\boldmath{$c$}}_{m}) &=& g(f^{1}_{\mbox{\boldmath{$c$}}_{1},
  \ldots,\mbox{\boldmath{$c$}}_{m}}(\mbox{\boldmath{$c$}}_{1},
  \ldots,\mbox{\boldmath{$c$}}_{m}),\ldots,f^{n}_{\mbox{\boldmath{$c$}}_{1},\ldots,\mbox{\boldmath{$c$}}_{m}}(\mbox{\boldmath{$c$}}_{1},
  \ldots,\mbox{\boldmath{$c$}}_{m})) \\
  &=& g((f^{1}_{\mbox{\boldmath{$c$}}_{1},\ldots,\mbox{\boldmath{$c$}}_{m}}(\mbox{\boldmath{$b$}}_{1}),\ldots,f^{1}_{\mbox{\boldmath{$c$}}_{1},\ldots,
  \mbox{\boldmath{$c$}}_{m}}(\mbox{\boldmath{$b$}}_{s})),\ldots,(f^{n}_{\mbox{\boldmath{$c$}}_{1},\ldots,\mbox{\boldmath{$c$}}_{m}}(\mbox{\boldmath{$b$}}_{1}),\\
  & &\ldots,f^{n}_{\mbox{\boldmath{$c$}}_{1},\ldots,\mbox{\boldmath{$c$}}_{m}}(\mbox{\boldmath{$b$}}_{s}))) \\
   &=& g((a_{1,1},\ldots,a_{s,1}),\ldots,(a_{1,n},\ldots,a_{s,n})) \\
   &=& (g(a_{1,1},\ldots,a_{1,n}),\ldots,g(a_{s,1},\ldots,a_{s,n}))\not\in\sigma.
\end{eqnarray*}
iii) Let $c\in C_{\rho}\subseteq C_{\sigma}$ and
$\mbox{\boldmath{$c$}}_{1}=(c_{1,1},\ldots,c_{s,1}),\ldots,\mbox{\boldmath{$c$}}_{m}=(c_{1,m},\ldots,c_{s,m})\in
E_{k}^{s}$ satisfy the following condition: for every $1\leq
i_{1}<i_{2}<\cdots<i_{h}\leq s,
(\mbox{\boldmath{$b$}}_{i_{1}},\ldots,\mbox{\boldmath{$b$}}_{i_{h}})\not\in\rho$
where $\mbox{\boldmath{$b$}}_{j}=(c_{j,1},\ldots,c_{j,m})$ for
$1\leq j\leq s$, i.e., there is $1\leq j\leq m$ such that
$(c_{i_{1},j},\ldots,c_{i_{h},j})\notin\rho$. Note that this implies
that $\mbox{\boldmath{$b$}}_{1},\ldots,\mbox{\boldmath{$b$}}_{s}$
are pairwise distinct. For $1\leq i\leq n$, we consider the $m$-ary
function
$f^{i}_{\mbox{\boldmath{$c$}}_{1},\ldots,\mbox{\boldmath{$c$}}_{m}}$
defined by
\[f^{i}_{\mbox{\boldmath{$c$}}_{1},\ldots,\mbox{\boldmath{$c$}}_{m}}(\mbox{\boldmath{$x$}})=
\begin{cases}
a_{j,i} & \text{if }\mbox{\boldmath{$x$}}=\mbox{\boldmath{$b$}}_{j}\text{ for some }1\leq j\leq s, \\
c & \text{otherwise.}
\end{cases}
\]
It is easy to see that
$f^{i}_{\mbox{\boldmath{$c$}}_{1},\ldots,\mbox{\boldmath{$c$}}_{m}}\in
\Pol\{\rho,\sigma\}$  for $1\leq i\leq n$. Hence, as above
$f_{\mbox{\boldmath{$c$}}_{1},\ldots,\mbox{\boldmath{$c$}}_{m}}=g(f^{1}_{\mbox{\boldmath{$c$}}_{1},\ldots,\mbox{\boldmath{$c$}}_{m}},\ldots,f^{n}_{\mbox{\boldmath{$c$}}_{1},\ldots,\mbox{\boldmath{$c$}}_{m}})\in
\langle (\Pol\{\rho, \sigma\})\cup\{g\}\rangle$ and

$f_{\mbox{\boldmath{$c$}}_{1},\ldots,\mbox{\boldmath{$c$}}_{m}}(\mbox{\boldmath{$c$}}_{1},\ldots,\mbox{\boldmath{$c$}}_{m})=g(\mbox{\boldmath{$a$}}_{1},
\ldots,\mbox{\boldmath{$a$}}_{n})\not\in\sigma$.
\end{proof}
Now we can give the proof of Proposition \ref{prop 1}.
\begin{proof}[Proof (of Proposition \ref{prop 1})]
Let $g\in \Pol\rho\setminus \Pol\sigma$ be an $n$-ary operation. We
will show that $\Pol\rho=\langle (\Pol\{\rho,
\sigma\})\cup\{g\}\rangle$. We have $\langle (\Pol\{\rho,
\sigma\})\cup\{g\}\rangle\subseteq \Pol\rho$. Let $h\in \Pol\rho$ be
an $m$-ary operation, we will show that $h\in \langle (\Pol\{\rho,
\sigma\})\cup\{g\}\rangle$. Using Lemma \ref{lem 1}, we define the
set $S$ as follows:

If $\sigma$ is of type I or II, then $S=\{f_{\mbox{\boldmath{$c$}}}:
\mbox{\boldmath{$c$}}\in\sigma^{m}\}$;

If $\sigma\subsetneq\rho$ or $\sigma$ is of type IV, then

$
S=\{f_{\mbox{\boldmath{$c$}}_{1},\ldots,\mbox{\boldmath{$c$}}_{m}}:
\mbox{\boldmath{$c$}}_{1},\ldots, \mbox{\boldmath{$c$}}_{m}\text{
satisfy condition } (ii) \text{ of Lemma } \ref{lem 1}\}$;

If $\rho\subsetneq\sigma$ or $\sigma$ is of type V, then

$
S=\{f_{\mbox{\boldmath{$c$}}_{1},\ldots,\mbox{\boldmath{$c$}}_{m}}:
\mbox{\boldmath{$c$}}_{1},\ldots, \mbox{\boldmath{$c$}}_{m}\text{
satisfy condition } (iii) \text{ of Lemma } \ref{lem 1}\}$.

To simplify our
notations, we assume that $S=\{f_{i}:1\leq i\leq q\}$. We consider
the mapping $\ext: E_{k}^{m}\rightarrow E_{k}^{m+q}$ defined by
$\ext(\mbox{\boldmath{$x$}})=(\mbox{\boldmath{$x$}},f_{1}(\mbox{\boldmath{$x$}}),\ldots,f_{q}(\mbox{\boldmath{$x$}}))$.
We construct an $(m+q)$-ary function $\tilde{H}$ as follows:

If $\sigma$ is of type $l$ with $l\in\{I,III,IV,V\}$, then choosing
$c\in C_{\rho}\cap \sigma$ if $l=I$ and $c\in C_{\rho}\cap
C_{\sigma}$ otherwise, we define
\[\tilde{H}(\mbox{\boldmath{$y$}})=
\begin{cases}
h(\mbox{\boldmath{$x$}}) & \text{if there exists }\mbox{\boldmath{$x$}}\in E_{k}^{m}\text{ such that } \mbox{\boldmath{$y$}}=\ext(\mbox{\boldmath{$x$}}),\\
c & \mbox{otherwise.}
\end{cases}\]

If $\sigma$ is of type II, then for every maximal $\rho$-chain B, we
have $B\cap\sigma\neq \emptyset$. For all $\mbox{\boldmath{$y$}}\in
\sigma^{m+q}$, we set
$D_{\mbox{\boldmath{$y$}}}=\{h(\mbox{\boldmath{$x$}}):
\mbox{\boldmath{$x$}}\in E_{k}^{m}\wedge
(\ext(\mbox{\boldmath{$x$}}),\mbox{\boldmath{$y$}})\in \rho\}$. If
$h(\mbox{\boldmath{$x$}}),h(\mbox{\boldmath{$x$}}')\in
D_{\mbox{\boldmath{$y$}}}$, then
$(\ext(\mbox{\boldmath{$x$}}),\mbox{\boldmath{$y$}}),(\ext(\mbox{\boldmath{$x$}}'),\mbox{\boldmath{$y$}})\in\rho$,
hence
$(\ext(\mbox{\boldmath{$x$}}),\ext(\mbox{\boldmath{$x$}}'))\in\rho$
(due to $\sigma$ is of type II), so
$(\mbox{\boldmath{$x$}},\mbox{\boldmath{$x$}}')\in\rho$ and
$(h(\mbox{\boldmath{$x$}}),h(\mbox{\boldmath{$x$}}'))\in\rho$ (due
to $h\in \Pol\rho$); therefore $D_{\mbox{\boldmath{$y$}}}$ is a
$\rho$-chain;  there is a maximal $\rho$-chain B such that
$D_{\mbox{\boldmath{$y$}}}\subseteq B$.
 Set
 \[\eta=\{B: D_{\mbox{\boldmath{$y$}}}\subseteq B\text{ and B is a maximal }\rho\text{-chain}\}\]
  and $u_{\mbox{\boldmath{$y$}}}=
 \min[\sigma\cap(\underset{B\in\eta}{\cup}B)]$.
We set: \[\tilde{H}(\mbox{\boldmath{$y$}})=
\begin{cases}
h(\mbox{\boldmath{$x$}}) & \text{if }\exists
\mbox{\boldmath{$x$}}\in
E_{k}^{m},\mbox{\boldmath{$y$}}=\ext(\mbox{\boldmath{$x$}}),\\
u_{\mbox{\boldmath{$y$}}} & \text{if } \mbox{\boldmath{$y$}}\in\sigma^{m+q},\\
c & \text{otherwise.}
\end{cases}\]
where $c\in C_{\rho}$. We will show that $\tilde{H}\in
\Pol\{\rho,\sigma\}$.

Firstly, we show that $\tilde{H}\in \Pol\rho$.

Let
$\mbox{\boldmath{$a$}}_{1}=(a_{1,1},\ldots,a_{h,1})$,$\ldots$,$\mbox{\boldmath{$a$}}_{m+q}=(a_{1,m+q},\ldots,a_{h,m+q})\in\rho$
and set
$\mbox{\boldmath{$b$}}_{1}=(a_{1,1},\ldots,a_{1,m+q})$,$\ldots,$
$\mbox{\boldmath{$b$}}_{h}=(a_{h,1},\ldots,a_{h,m+q})$. If there is
$j\in\{1,\ldots,h\}$ such that
$\tilde{H}(\mbox{\boldmath{$b$}}_{j})=c$, then
$\tilde{H}(\mbox{\boldmath{$a$}}_{1},\ldots,\mbox{\boldmath{$a$}}_{m+q})=(\tilde{H}(\mbox{\boldmath{$b$}}_{1}),\ldots,\tilde{H}(\mbox{\boldmath{$b$}}_{h}))
\in\rho$. If for all $j\in\{1,\ldots,h\}$,
$\tilde{H}(\mbox{\boldmath{$b$}}_{j})\neq c$ and if $\sigma$ is not
of
 type II, then there exist $\mbox{\boldmath{$x$}}_{1},\ldots,\mbox{\boldmath{$x$}}_{h}$ such that
$\mbox{\boldmath{$b$}}_{1}=\ext(\mbox{\boldmath{$x$}}_{1}),\ldots,\mbox{\boldmath{$b$}}_{h}=\ext(\mbox{\boldmath{$x$}}_{h})$;
hence $\tilde{H}(\mbox{\boldmath{$a$}}_{1},\ldots,
\mbox{\boldmath{$a$}}_{m+q})=(h(\mbox{\boldmath{$x$}}_{1}),\ldots,h(\mbox{\boldmath{$x$}}_{h}))\in\rho$
(due to $h\in \Pol\rho$ and
$(\mbox{\boldmath{$x$}}_{1},\ldots,\mbox{\boldmath{$x$}}_{h})\in\rho$).

Now we suppose that $\sigma$ is of type II. Therefore $h=2$. If
there exist $\mbox{\boldmath{$x$}}_{1},\mbox{\boldmath{$x$}}_{2}\in
E_{k}^{m}$ such that
$\mbox{\boldmath{$b$}}_{1}=\ext(\mbox{\boldmath{$x$}}_{1})$ and
$\mbox{\boldmath{$b$}}_{2}=\ext(\mbox{\boldmath{$x$}}_{2})$, then
$\tilde{H}(\mbox{\boldmath{$a$}}_{1},
\mbox{\boldmath{$a$}}_{2})=(h(\mbox{\boldmath{$x$}}_{1}),h(\mbox{\boldmath{$x$}}_{2}))\in~\rho.$
 If $\mbox{\boldmath{$b$}}_{1},\mbox{\boldmath{$b$}}_{2}\in\sigma^{m+q}$, then $D_{\mbox{\boldmath{$b$}}_{1}}=D_{\mbox{\boldmath{$b$}}_{2}}$
 (due to $\rho$ being of type II,
 and $(\ext(\mbox{\boldmath{$x$}}),\mbox{\boldmath{$b$}}_{1})\in\rho$ if and only if $(\ext(x),\mbox{\boldmath{$b$}}_{2})\in\rho$ as
  $(\mbox{\boldmath{$b$}}_{1},\mbox{\boldmath{$b$}}_{2})\in\rho$);
 hence  $\tilde{H}(\mbox{\boldmath{$b$}}_{1})=\tilde{H}(\mbox{\boldmath{$b$}}_{2})$ and $\tilde{H}(\mbox{\boldmath{$a$}}_{1},\mbox{\boldmath{$a$}}_{2})\in\rho$.
  Otherwise, without loss of generality we suppose that
there exists $\mbox{\boldmath{$x$}}\in E_{k}^{m}$ such that
$\mbox{\boldmath{$b$}}_{1}=\ext(\mbox{\boldmath{$x$}})$ and
 $\mbox{\boldmath{$b$}}_{2}\in\sigma^{m+q}$. Therefore $h(\mbox{\boldmath{$x$}})\in D_{\mbox{\boldmath{$b$}}_{2}}$ and $(h(\mbox{\boldmath{$x$}}),u_{\mbox{\boldmath{$b$}}_{2}})\in\rho$.
 Hence $\tilde{H}(\mbox{\boldmath{$a$}}_{1},\mbox{\boldmath{$a$}}_{2})\in\rho$. We conclude that $\tilde{H}\in \Pol\rho$.

Secondly, we show that $\tilde{H}\in \Pol\sigma$.

i) We suppose that $\sigma$ is of type I or II. Let
$a_{1},\ldots,a_{m+q}\in\sigma$. By the construction of $\ext$, we
have $(a_{1},\ldots,a_{m+q})\not\in \ext(E_{k}^{m})$, hence
$\tilde{H}(a_{1},\ldots,a_{m+q})=c\in C_{\rho}\cap\sigma$ if
$\sigma$ is of type I or
$\tilde{H}(a_{1},\ldots,a_{m+q})=u_{(a_{1},\ldots,a_{m+q})}\in\sigma$
if $\sigma$ is of type II.

ii) We suppose that $\sigma$ is not of type I or II. Let
$\mbox{\boldmath{$a$}}_{1}=(a_{1,1},\ldots,a_{s,1}),\ldots,\mbox{\boldmath{$a$}}_{m+q}=(a_{1,m+q},\ldots,a_{s,m+q})\in\sigma$.
Set $\mbox{\boldmath{$b$}}_{1}=(a_{1,1},\ldots,a_{1,m+q}),\ldots,$
$\mbox{\boldmath{$b$}}_{s}=(a_{s,1},\ldots,a_{s,m+q})$. If there is
$j\in\{1,\ldots,~s\}$ such that
$\tilde{H}(\mbox{\boldmath{$b$}}_{j})=c$, then

$h(\mbox{\boldmath{$a$}}_{1},\ldots,\mbox{\boldmath{$a$}}_{m+q})=(\tilde{H}(\mbox{\boldmath{$b$}}_{1}),\ldots,\tilde{H}(\mbox{\boldmath{$b$}}_{s}))\in\sigma$
(due to $c\in C_{\sigma}$). If for all $1\leq j\leq s,
\tilde{H}(\mbox{\boldmath{$b$}}_{j})\neq c$, then there exist
$\mbox{\boldmath{$x$}}_{1},\ldots,\mbox{\boldmath{$x$}}_{s}\in
E_{k}^{m}$
such that $\textbf{\mbox{\boldmath{$b$}}}_{1}=\ext(\mbox{\boldmath{$x$}}_{1}), \mbox{\boldmath{$b$}}_{2}=\ext(\mbox{\boldmath{$x$}}_{2}),\ldots,\mbox{\boldmath{$b$}}_{s}=\ext(\mbox{\boldmath{$x$}}_{s})$.\\
If $\sigma\subsetneq\rho$ or $\sigma$ is of type IV, then
$\mbox{\boldmath{$x$}}_{1},\ldots,\mbox{\boldmath{$x$}}_{s}$ are
 not pairwise distinct elements of $E_{k}^{m}$(due to the construction of the set $S$).
  Therefore there is  $1\leq p<q\leq s$ such that $\mbox{\boldmath{$x$}}_{p}=\mbox{\boldmath{$x$}}_{q}$. Hence
$\tilde{H}(\mbox{\boldmath{$a$}}_{1},\ldots,\mbox{\boldmath{$a$}}_{m+q})=(\tilde{H}(\mbox{\boldmath{$b$}}_{1}),\ldots,
\tilde{H}(\mbox{\boldmath{$b$}}_{s}))=(h(\mbox{\boldmath{$x$}}_{1}),\ldots,
\ldots,h(\mbox{\boldmath{$x$}}_{s}))\in\sigma$ (due to $\sigma$
being totally reflexive).

 If $\rho\subsetneq\sigma$ or $\sigma$ is
of type V, then there exist $1\leq i_{1}<i_{2}<\ldots<i_{h}\leq s$
such that $(\mbox{\boldmath{$x$}}_{i_{1}},
\ldots,\mbox{\boldmath{$x$}}_{i_{h}})\in\rho$. Therefore
$\tilde{H}(\mbox{\boldmath{$a$}}_{1},\ldots,\mbox{\boldmath{$a$}}_{m+q})=(h(\mbox{\boldmath{$x$}}_{1}),\ldots,h(\mbox{\boldmath{$x$}}_{s}))$
and
$(h(\mbox{\boldmath{$x$}}_{i_{1}}),\ldots,h(\mbox{\boldmath{$x$}}_{i_{h}}))\in\rho$
(due to $h\in \Pol\rho$). So a permutation of
$(h(\mbox{\boldmath{$x$}}_{1}),\ldots,h(\mbox{\boldmath{$x$}}_{h}))$
 belongs to $\lambda\subset\sigma$, hence $\tilde{H}(\mbox{\boldmath{$a$}}_{1},\ldots,\mbox{\boldmath{$a$}}_{m+q})\in\sigma$
 (due to $\sigma$ being totally symmetric). We conclude that $\tilde{H}\in \Pol\sigma$. Therefore, for every
$\mbox{\boldmath{$x$}}\in E_{k}^{m},
h(\mbox{\boldmath{$x$}})=\tilde{H}(\mbox{\boldmath{$x$}},f_{1}(\mbox{\boldmath{$x$}}),\ldots,f_{q}(\mbox{\boldmath{$x$}}))$
and $h\in\langle (\Pol\{\rho, \sigma\})\cup\{g\}\rangle$.
\end{proof}

The more difficult part of this work is the completeness criterion
which will be discussed in the next section.

\section{Proof of the completeness criterion}
In this section, we will show that the relations of type I, II, III,
IV, and V are the only central  relations $\sigma$ such that
$\Pol\{\rho, \sigma\}$ is maximal in $\Pol\rho$. We recall that
$\rho$ is an $h$-ary central relation ($h\geq 2$) and $\sigma$ is an
$s$-ary
 central relation $(s\geq 1$). We will distinguish the following cases:

i) $s=1$,  ii) $s=h$,  iii) $h\leq s$, iv) $h\geq s$. We begin with the case  $s=1$.

\begin{proposition}\label{prop 2}
  Let $k\geq 3$, $\rho$ an h-ary central  relation $(h\geq 2)$ and
 $\sigma$ a unary central relation on $E_{k}$.
If $\Pol\{\rho, \sigma\}$ is maximal in $\Pol\rho$, then $\sigma$ is
of type I or II.
\end{proposition}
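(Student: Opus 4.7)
The plan is a contrapositive argument. Assuming $\sigma$ is a unary central relation of neither type I nor type II, I would construct a clone $\mathcal{C}$ strictly between $\Pol\{\rho, \sigma\}$ and $\Pol\rho$, contradicting maximality. Failure of type I immediately gives $C_\rho \cap \sigma = \emptyset$. Since type II is formulated only for $h = 2$, its failure splits into three mutually exclusive sub-cases: (a) $h \geq 3$; (b) $h = 2$ with $\rho \neq \gamma$; and (c) $h = 2$ with $\rho = \gamma$ but some maximal $\rho$-chain $B_{0}$ disjoint from $\sigma$, where $\gamma$ is the binary auxiliary relation introduced just before Theorem~\ref{theo 1}.

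For sub-case (b), the natural intermediate is $\Pol\{\rho, \gamma\}$: the inclusion $\Pol\{\rho, \sigma\} \subseteq \Pol\{\rho, \gamma\}$ is free because $\gamma \in [\{\rho, \sigma\}]$. Strictness of the upper inclusion $\Pol\{\rho, \gamma\} \subsetneq \Pol\rho$ is verified by picking a witness tuple in the symmetric difference of $\gamma$ and $\rho$ (available precisely because $\rho \neq \gamma$) and using it to build an operation in $\Pol\rho$ that violates $\gamma$. Strictness below is established by producing an operation in $\Pol\{\rho, \gamma\} \setminus \Pol\sigma$, which I would model on the construction in Lemma~\ref{lem 1}(i), exploiting the slack between $\gamma$ and $\sigma$. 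Sub-case (c) proceeds analogously, but with a unary intermediate $\tau$ strictly containing $\sigma$—for instance $\tau = \sigma \cup \{b\}$ for some $b \in B_{0}$: the disjointness $B_{0} \cap \sigma = \emptyset$ secures $\sigma \subsetneq \tau$, while the equation $\rho = \gamma$ confines operations enough to keep $\Pol\{\rho, \tau\}$ strictly below $\Pol\rho$.

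Sub-case (a), with $h \geq 3$, is the main obstacle because type II has no meaningful formulation in higher arity and no ready-made intermediate relation is supplied in the paper's setup. The plan is to manufacture one by an $h$-ary analogue of $\gamma$, requiring that each coordinate can be swapped for a common element of $\sigma$ while staying in $\rho$, for example
\[
\gamma^{(h)} = \{(a_{1}, \ldots, a_{h}) \in E_{k}^{h} : \exists u \in \sigma,\ (a_{1}, \ldots, a_{i-1}, u, a_{i+1}, \ldots, a_{h}) \in \rho \text{ for every } i\},
\]
or a closely related variant. The challenge is twofold: first, pin down the correct $h$-ary auxiliary so that it is totally reflexive, totally symmetric, lies in $[\{\rho, \sigma\}]$, and is \emph{strictly} between $\rho$ and $E_{k}^{h}$ (the hypothesis $C_\rho \cap \sigma = \emptyset$ is invoked here to rule out triviality at the upper end, while the presence of tuples outside $\rho$ with a central-like witness in $\sigma$ is used at the lower end); second, once $\gamma^{(h)}$ is fixed, mimic the sub-case (b) argument to produce separating operations on both sides. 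The routine reflexivity, symmetry, and definability checks are standard once the relation is chosen, so the bulk of the technical work concentrates on this choice and on the two strict inclusions.
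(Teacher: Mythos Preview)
Your overall contrapositive strategy is sound, but the proposed intermediate clones in each sub-case have genuine gaps.

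In sub-case (b), taking $\mathcal{C}=\Pol\{\rho,\gamma\}$ fails when $\gamma=E_k^2$: then $\Pol\{\rho,\gamma\}=\Pol\rho$ and there is no witness in $\Pol\rho\setminus\Pol\gamma$, yet $\gamma=E_k^2$ is perfectly compatible with $C_\rho\cap\sigma=\emptyset$ (every pair may have a common $\rho$-neighbour in $\sigma$ without any element of $\sigma$ being central). The paper handles this by passing to the tower $\gamma_t=\{(a_1,\dots,a_t):\exists u\in\sigma,\ (a_i,u)\in\rho\text{ for all }i\}$ and locating the least $n$ with $\gamma_n\neq E_k^n$; the intermediate is then $\Pol\{\rho,\gamma_n\}$, not $\Pol\{\rho,\gamma_2\}$ (Lemmas~\ref{lem 6}--\ref{lem 7}). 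In sub-case (c), the choice $\tau=\sigma\cup\{b\}$ is not in the relational clone $[\{\rho,\sigma\}]$, so you have no reason to expect $\Pol\{\rho,\sigma\}\subseteq\Pol\{\rho,\tau\}$: an operation fixing $\sigma$ setwise may send $b$ anywhere outside $\tau$ while still preserving $\rho$. The paper instead compares $\gamma_m$ with $\rho_m=\{(a_1,\dots,a_m):\{a_1,\dots,a_m\}^2\subseteq\rho\}$ for $m$ the maximal $\rho$-chain size, and shows $\gamma_m\subsetneq\rho_m$ contradicts maximality while $\gamma_m=\rho_m$ forces every maximal chain to meet $\sigma$ (Lemmas~\ref{lem 8}--\ref{lem 9}).

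For sub-case (a), your $\gamma^{(h)}$ need not contain $\rho$ (with $C_\rho\cap\sigma=\emptyset$ there is no obvious $u\in\sigma$ witnessing an arbitrary $\rho$-tuple), so ``strictly between $\rho$ and $E_k^h$'' is not secured. The paper's route is again more layered: it first uses the $(h{-}1)$-ary relation $\alpha_{h-1}$, reduces to $\alpha_{h-1}=E_k^{h-1}$, and then splits into two further configurations (Lemmas~\ref{lem 11}--\ref{lem 13}), each requiring its own auxiliary relation of growing arity. The moral in all three sub-cases is the same: a single fixed-arity intermediate is not enough; one must climb an arity tower until the first nontrivial level appears.
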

 The proof of Proposition \ref{prop 2} is spread across in the Lemmas \ref{lem 2}--\ref{lem 13}.
 For all $a\in E_{k}\setminus\sigma$ we denote by $c_{a}$ the unary constant
 operation on $E_{k}$ with value $a$. Firstly we suppose that $\rho$ is a  binary central relation.
Let $\tau$ be the unary relation defined on $E_{k}$  by
$$\tau=\{y\in E_{k}:\exists u\in\sigma, (u,y)\in\rho\}.$$
 Reflexivity of $\rho$ implies that $\sigma\subseteq\tau\subseteq E_{k}$
and we have the following three cases:

 (1) $\sigma=\tau$, (2)
$\sigma\subsetneq \tau\subsetneq E_{k}$, (3) $\tau=E_{k}$.
 \begin{lemma}\label{lem 2} Under the assumptions of Proposition \ref{prop
 2} and $\rho$ being binary, the subcase $\tau=\sigma$ is impossible.
 \end{lemma}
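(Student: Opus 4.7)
The plan is to derive a contradiction directly from the hypothesis $\tau=\sigma$, without even needing the full maximality of $\Pol\{\rho,\sigma\}$ in $\Pol\rho$: the assertion is already forced by the definitions of central relation and of $\tau$. The idea is to exhibit an element that lies in $\tau$ (via the center of $\rho$) and then, using $\tau=\sigma$, promote this to membership in $\sigma$, from which we will deduce that $\tau=E_{k}$ and obtain a contradiction with $\sigma\subsetneq E_{k}$.

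Concretely, I would proceed in three short steps. First, choose any $a\in C_{\rho}$, which exists because $\rho$ is a binary central relation, and any $u\in\sigma$, which exists because the unary central relation $\sigma$ has nonvoid center equal to $\sigma$ itself. The defining property of $C_{\rho}$ gives $(a,u)\in\rho$, and the total symmetry of $\rho$ yields $(u,a)\in\rho$; hence $a\in\tau$ by the very definition of $\tau$. Second, apply the standing assumption $\tau=\sigma$ to conclude $a\in\sigma$. Third, for every $b\in E_{k}$ we have $(a,b)\in\rho$ since $a\in C_{\rho}$, and with $a\in\sigma$ this witnesses $b\in\tau$; thus $\tau=E_{k}$. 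But $\sigma$ is a proper subset of $E_{k}$ (being a unary central relation) and $\tau=\sigma$ by assumption, so $\tau\subsetneq E_{k}$, a contradiction.

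The only delicate point worth flagging is the use of the total symmetry of $\rho$ to convert $(a,u)\in\rho$ into $(u,a)\in\rho$, which is what is actually required by the definition $\tau=\{y\in E_{k}:\exists u\in\sigma,\ (u,y)\in\rho\}$. For binary $\rho$ total symmetry is just ordinary symmetry, so this step is immediate, and no further case analysis or maximality argument is needed. I therefore expect no real obstacle in carrying out this proof.
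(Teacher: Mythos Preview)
Your proof is correct and follows essentially the same approach as the paper: pick $c\in C_{\rho}$ and $u\in\sigma$, use symmetry of $\rho$ to get $(u,c)\in\rho$, conclude $c\in\tau=\sigma$, and then observe that $c\in\sigma\cap C_{\rho}$ forces every element of $E_{k}$ into $\tau=\sigma$, contradicting $\sigma\subsetneq E_{k}$. The only difference is that you spell out the symmetry step explicitly, whereas the paper writes $(u,c)\in\rho$ directly.
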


 \begin{proof}
 Let $c\in C_{\rho}$ and  $u\in\sigma$, then
 $(u,c)\in\rho$. So $c\in\tau=\sigma$; hence for all $a\in E_{k}, a\in\tau=\sigma$.
 Therefore, $E_{k}=\sigma$, which is a contradiction.
\end{proof}

 \begin{lemma}\label{lem 3}
 Under the assumptions of Proposition \ref{prop 2} and $\rho$ being binary, the subcase
 $\sigma\subsetneq\tau\subsetneq E_{k}$ is impossible.
 \end{lemma}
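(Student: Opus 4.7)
The plan is to argue by contradiction: assuming $\sigma \subsetneq \tau \subsetneq E_k$, I will exhibit an intermediate clone strictly between $\Pol\{\rho,\sigma\}$ and $\Pol\rho$, contradicting the maximality hypothesis. The natural candidate for the intermediate clone is $\Pol\{\rho,\tau\}$, so the task splits into showing two strict inclusions $\Pol\{\rho,\sigma\}\subsetneq\Pol\{\rho,\tau\}\subsetneq\Pol\rho$.

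First I would observe that $\tau$ is pp-definable from $\{\rho,\sigma\}$ via the formula $\tau(y)\equiv\exists u\,(\sigma(u)\wedge\rho(u,y))$, so $\tau\in[\{\rho,\sigma\}]$. By the Remark, every polymorphism of $\{\rho,\sigma\}$ also preserves $\tau$, hence $\Pol\{\rho,\sigma\}\subseteq\Pol\{\rho,\tau\}$. Note that $\tau$ is a unary central relation: it is nonempty (it contains $\sigma$) and it is a proper subset of $E_k$ by the case assumption.

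Next I would use the constant unary operations $c_a$ introduced in the preamble to Proposition~\ref{prop 2}. Since $\rho$ is binary and reflexive, every $c_a$ preserves $\rho$, so $c_a\in\Pol\rho$ for each $a\in E_k$. For any $a\in E_k\setminus\tau$ (which exists because $\tau\subsetneq E_k$), the operation $c_a$ fails to preserve $\tau$: picking any $u\in\tau$, we get $c_a(u)=a\notin\tau$. This gives $c_a\in\Pol\rho\setminus\Pol\{\rho,\tau\}$, establishing the second strict inclusion. Similarly, for any $b\in\tau\setminus\sigma$ (which exists because $\sigma\subsetneq\tau$), the operation $c_b$ lies in $\Pol\rho$ (reflexivity) and in $\Pol\tau$ (since $b\in\tau$), but not in $\Pol\sigma$ (since $b\notin\sigma$). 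This gives $c_b\in\Pol\{\rho,\tau\}\setminus\Pol\{\rho,\sigma\}$, establishing the first strict inclusion.

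The two strict inclusions together contradict the maximality of $\Pol\{\rho,\sigma\}$ in $\Pol\rho$, so the case $\sigma\subsetneq\tau\subsetneq E_k$ cannot occur. There is no serious obstacle here; the only thing to be careful about is the pp-definition of $\tau$, which certifies that $\tau$ lies in the relational clone generated by $\{\rho,\sigma\}$ and thus that $\Pol\{\rho,\sigma\}\subseteq\Pol\tau$. The rest is a routine verification with constant operations that exploits the reflexivity of $\rho$.
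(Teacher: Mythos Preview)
Your proof is correct and follows essentially the same approach as the paper: both argue by contradiction, observe that $\tau\in[\{\rho,\sigma\}]$ to get the chain $\Pol\{\rho,\sigma\}\subseteq\Pol\{\rho,\tau\}\subseteq\Pol\rho$, and then use constant operations with values in $\tau\setminus\sigma$ and $E_k\setminus\tau$ to make both inclusions strict. The only difference is cosmetic (you swap the names $a$ and $b$ relative to the paper and spell out the pp-definition of $\tau$ more explicitly).
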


\begin{proof}
 We assume that  $\sigma\subsetneq\tau\subsetneq E_{k}$. Since $\tau\in[\{\rho, \sigma\}]$, we have $\Pol\{\rho, \sigma\}\subseteq
 \Pol\{\rho,\tau\}\subseteq \Pol\rho$. Let  $a\in\tau\setminus\sigma$; $c_{a}\not\in
\Pol\sigma$ and $c_{a}\in \Pol\{\rho, \tau\}$, therefore
$\Pol\{\rho, \sigma\}\subsetneq \Pol\{\rho, \tau\}$. Let $b\in
E_{k}\setminus\tau$; $c_{b}\not\in \Pol\tau$ and $c_{b}\in
\Pol\rho$; hence $\Pol\{\rho, \tau\}\subsetneq \Pol\rho$. Thus,
$\Pol\{\rho, \sigma\}\subsetneq \Pol\{\rho, \tau\}\subsetneq
\Pol\rho$. We obtain a contradiction with the maximality of
 $\Pol\{\rho, \sigma\}$ in $\Pol\rho$.
\end{proof}
We conclude that $\tau=E_{k}$, which implies that for all $x\in
E_{k}$ there exists $u\in\sigma$ such that $(u,x)\in\rho$. Let
$\gamma_{2}=\gamma$ be the binary relation defined before Theorem~
\ref{theo 1}. $\tau=E_{k}$ implies that $\gamma_{2}$ is reflexive;
$\gamma_{2}$ is symmetric by definition. Let $x\not\in\sigma$, there
is $u\in\sigma$ such that $(u,x)\in\rho$. So
$(u,x)\in\gamma_{2}\cap\rho$ and $\iota_{k}^{2}\subsetneq
\gamma_{2}\cap\rho\subseteq \rho$. Hence $\iota_{k}^{2}\subsetneq
\gamma_{2}\cap\rho\subsetneq\rho$ or $\gamma_{2}\cap\rho=\rho$.

\begin{lemma}\label{lem 4}
Under the assumptions of proposition \ref{prop 2} and $\rho$ being
binary, the subcase
$\iota_{k}^{2}\subsetneq\gamma_{2}\cap\rho\subsetneq \rho$ is
impossible.
\end{lemma}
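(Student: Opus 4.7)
The plan is to argue by contradiction, interposing a new clone between $\Pol\{\rho,\sigma\}$ and $\Pol\rho$ built from the relation $\mu:=\gamma_{2}\cap\rho$. Since $\gamma_{2}\in[\{\rho,\sigma\}]$ by construction (it is a pp-definable combination of $\rho$ and $\sigma$), so is $\mu$. By the remark on relational clones, this already gives
$\Pol\{\rho,\sigma\}\subseteq\Pol\{\rho,\mu\}\subseteq\Pol\rho,$
so to reach a contradiction with maximality it suffices to show that both inclusions are strict.

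For the first strict inclusion, I would mimic the idea already used in Lemma~\ref{lem 3}. Since $\sigma$ is a proper unary relation there exists $a\in E_{k}\setminus\sigma$, and the constant operation $c_{a}$ then fails to preserve $\sigma$. On the other hand, both $\rho$ and $\mu$ are reflexive (reflexivity of $\mu$ is inherited from $\rho$ and from the fact that $\tau=E_{k}$, which forces $\gamma_{2}$ to be reflexive), so $c_{a}$ lies in $\Pol\{\rho,\mu\}$. Hence $\Pol\{\rho,\sigma\}\subsetneq\Pol\{\rho,\mu\}$.

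The more delicate part, and the main obstacle, is the second strict inclusion, because constants trivially preserve every reflexive relation and so cannot separate $\Pol\{\rho,\mu\}$ from $\Pol\rho$. I would instead build an explicit unary witness $f\in\Pol\rho\setminus\Pol\mu$. From $\iota_{k}^{2}\subsetneq\mu$ I pick $(a_{1},b_{1})\in\mu$ with $a_{1}\neq b_{1}$, and from $\mu\subsetneq\rho$ I pick $(a_{0},b_{0})\in\rho\setminus\mu$; note that $\mu\supseteq\iota_{k}^{2}$ forces $a_{0}\neq b_{0}$. Partition $E_{k}=A\sqcup B$ with $a_{1}\in A$, $b_{1}\in B$, and define $f$ to be $a_{0}$ on $A$ and $b_{0}$ on $B$. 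The image of $f$ is $\{a_{0},b_{0}\}$, and all pairs in $\{a_{0},b_{0}\}^{2}$ lie in $\rho$ (diagonal pairs by reflexivity, off-diagonal by $(a_{0},b_{0})\in\rho$ and symmetry), so $f\in\Pol\rho$. But $f(a_{1},b_{1})=(a_{0},b_{0})\notin\mu$, while $(a_{1},b_{1})\in\mu$, so $f\notin\Pol\mu$.

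Combining these two strict inclusions yields
$\Pol\{\rho,\sigma\}\subsetneq\Pol\{\rho,\mu\}\subsetneq\Pol\rho,$
contradicting the hypothesis that $\Pol\{\rho,\sigma\}$ is maximal in $\Pol\rho$. The delicate point to double-check is that $(a_{0},b_{0})\in\rho\setminus\mu$ really guarantees $a_{0}\neq b_{0}$ (so the partition into two nonempty parts $A,B$ can accommodate $a_{1}\in A$, $b_{1}\in B$), which follows precisely because $\iota_{k}^{2}\subseteq\mu$ puts the whole diagonal inside $\mu$; everything else is a short verification.
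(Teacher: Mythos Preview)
Your proof is correct and follows essentially the same approach as the paper. Both arguments sandwich $\Pol\{\rho,\mu\}$ strictly between $\Pol\{\rho,\sigma\}$ and $\Pol\rho$: the first strict inclusion via the constant $c_a$ with $a\notin\sigma$, and the second via a two-valued unary map with image $\{a_0,b_0\}\subseteq E_k$ where $(a_0,b_0)\in\rho\setminus\mu$, sending a fixed off-diagonal pair $(a_1,b_1)\in\mu$ to $(a_0,b_0)$. The only cosmetic difference is that the paper takes the specific partition $\{a_1\}\sqcup(E_k\setminus\{a_1\})$, whereas you allow any partition separating $a_1$ from $b_1$; and reflexivity of $\mu$ is in fact immediate from the hypothesis $\iota_k^2\subsetneq\mu$, so your appeal to $\tau=E_k$ is not needed.
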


\begin{proof}
We choose $a\in E_{k}\setminus\sigma$. $c_{a}\not\in \Pol\sigma$ and
$c_{a}\in \Pol\{\rho,(\gamma_{2}\cap\rho)\}$;
 therefore, $\Pol\{\rho,\sigma\}\subsetneq \Pol\{\rho,(\rho\cap\gamma_{2})\}$.
Let us consider an element $(e,d)\in\rho\setminus\gamma_{2}$ and
$(u,v)\in (\gamma_{2}\cap\rho)\setminus \iota_{k}^{2}$. The unary
operation $f$ defined on $E_{k}$ by
\[f(x)=
\begin{cases}
e & \text{if } x=u, \\
d & \text{otherwise.}
\end{cases}
\]
preserves $\rho$, but not $\gamma_{2}\cap\rho$ due to $(u,v)\in
\gamma_{2}\cap\rho$ and $(f(u),f(v))=(e,d)\not\in
\gamma_{2}\cap\rho$. Thus $\Pol\{\rho,\sigma\}\subsetneq
\Pol\{\rho,(\gamma_{2}\cap\rho)\}\subsetneq \Pol\rho$ contradicting
the maximality of $\Pol\{\rho, \sigma\}$ in $\Pol\rho$.
\end{proof}
From this lemma we have $\rho=\gamma_{2}\cap\rho$, i.e.
$\rho\subseteq\gamma_{2}$. The following three subcases are
possible:

(2.1) $\rho\subsetneq \gamma_{2}\subsetneq E_{k}^{2}$, (2.2)
$\gamma_{2}=E_{k}^{2}$ and (2.3) $\gamma_{2}=\rho$.

\begin{lemma}\label{lem 5}
Under the assumptions of Proposition \ref{prop 2} and $\rho$ being
binary, the subcase $\rho\subsetneq\gamma_{2}\subsetneq E_{k}^{2}$
is impossible.
\end{lemma}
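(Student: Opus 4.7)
The plan is to imitate the arguments of Lemmas \ref{lem 3} and \ref{lem 4}: I would exhibit a clone strictly between $\Pol\{\rho,\sigma\}$ and $\Pol\rho$, contradicting the maximality of $\Pol\{\rho,\sigma\}$ in $\Pol\rho$. The natural candidate here is $\Pol\{\rho,\gamma_2\}$. Since $\gamma_2\in[\{\rho,\sigma\}]$, the Remark gives the chain $\Pol\{\rho,\sigma\}\subseteq \Pol\{\rho,\gamma_2\}\subseteq \Pol\rho$, and it remains to verify that both inclusions are strict.

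For the first strict inclusion, I would pick $a\in E_k\setminus\sigma$. The constant operation $c_a$ clearly fails to preserve $\sigma$, while it does preserve both $\rho$ and $\gamma_2$ because both relations are reflexive (recall $\gamma_2$ is reflexive thanks to $\tau=E_k$). Hence $c_a\in \Pol\{\rho,\gamma_2\}\setminus \Pol\{\rho,\sigma\}$.

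For the second strict inclusion I would build a unary operation $f\in\Pol\rho\setminus\Pol\gamma_2$. Using $\rho\subsetneq\gamma_2\subsetneq E_k^2$, choose $(u,v)\in\gamma_2\setminus\rho$ (so $u\neq v$ by reflexivity of $\rho$, and $(v,u)\notin\rho$ by symmetry), $(e,d)\in E_k^2\setminus\gamma_2$ (so $e\neq d$ since $\iota_k^2\subseteq\gamma_2$), and $c\in C_\rho$. Define
\[
f(x)=\begin{cases} e & \text{if } x=u,\\ d & \text{if } x=v,\\ c & \text{otherwise.}\end{cases}
\]
Then $(u,v)\in\gamma_2$ whereas $(f(u),f(v))=(e,d)\notin\gamma_2$, which gives $f\notin\Pol\gamma_2$.

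The only thing left is to check $f\in\Pol\rho$, and this is where a small case analysis is needed, but no real obstacle arises. For $(x,y)\in\rho$, the case $\{x,y\}=\{u,v\}$ with $x\neq y$ is impossible because $(u,v),(v,u)\notin\rho$. If $x=y$, then $(f(x),f(y))\in\rho$ by reflexivity. Otherwise at least one of $x,y$ lies outside $\{u,v\}$, so at least one of $f(x),f(y)$ equals $c\in C_\rho$, and hence $(f(x),f(y))\in\rho$. This completes the argument and produces the required contradiction. The main technical point is precisely the use of the center element $c$: it absorbs every case not involving the distinguished pair $(u,v)$, and this is what lets us exploit the fact that $(u,v)$ is a ``$\gamma_2$-edge'' which is not a ``$\rho$-edge'' in order to violate $\gamma_2$ without violating $\rho$.
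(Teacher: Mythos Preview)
Your argument is correct and reaches the same contradiction via the same intermediate clone $\Pol\{\rho,\gamma_2\}$, but you establish the strict inclusion $\Pol\{\rho,\gamma_2\}\subsetneq\Pol\rho$ differently from the paper. The paper observes that, under the hypothesis $\rho\subsetneq\gamma_2\subsetneq E_k^2$, the relation $\gamma_2$ is itself a central relation with $\rho\subsetneq\gamma_2$ (hence of type~III), and then invokes Proposition~\ref{prop 1} to conclude immediately that $\Pol\{\rho,\gamma_2\}$ is maximal in $\Pol\rho$, in particular strictly contained in it. Your route is more elementary: you construct an explicit unary $f\in\Pol\rho\setminus\Pol\gamma_2$ by hand, using a center element $c\in C_\rho$ as a default value. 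The paper's argument is shorter and exploits work already done, but tacitly requires verifying that $\gamma_2$ is central (reflexive from $\tau=E_k$, symmetric by definition, and $C_\rho\subseteq C_{\gamma_2}$); your construction avoids that verification entirely and is self-contained.
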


\begin{proof}
We suppose that $\rho\subsetneq\gamma_{2}\subsetneq E_{k}^{2}$. From
$\rho\subsetneq \gamma_{2}\subsetneq E_{k}^{2}$, $\gamma_{2}$ is a
central relation of type III. By Proposition \ref{prop 1},
$\Pol\{\rho, \gamma_{2}\}$ is maximal in $\Pol\rho$. Hence
$\Pol\{\rho, \gamma_{2}\}\subsetneq \Pol\rho$. Let $a\in
E_{k}\setminus\sigma$, we have $c_{a}\in \Pol\{\rho, \gamma_{2}\}$
and $c_{a}\not\in \Pol\sigma$. Therefore $\Pol\{\rho,
\sigma\}\subsetneq \Pol\{\rho, \gamma_{2}\}\subsetneq \Pol\rho$
contradicting the fact that $\Pol\{\rho, \sigma\}$ is a submaximal
clone of $\Pol\rho$.
\end{proof}

Hence, we are left with cases (2.2) and (2.3). First we suppose that
$E_{k}^{2}=\gamma_{2}$ and we set for all $2\leq t\leq k$
 $$\gamma_{t}=\{(a_{1},\ldots,a_{t})\in E_{k}^{t}: \exists
u\in\sigma, \{(a_{1},u),\ldots,(a_{t},u)\}\subseteq\rho\}.$$
 Assuming that $\gamma_{k}\neq E_{k}^{k}$, let $n$ be the least integer such that
 $\gamma_{n}\neq E_{k}^{n}$. Then
$\gamma_{n-1}=E_{k}^{n-1}$ and $n >2$.

\begin{lemma}\label{lem 6}
Under the assumption of Proposition \ref{prop 2}, $\rho$ being
binary and
 $\gamma_{2}=E_{k}^{2}$, the subcase  $\gamma_{k}\neq E_{k}^{k}$ is impossible.
\end{lemma}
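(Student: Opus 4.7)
The plan is to mimic the strategy already used in Lemma~\ref{lem 5}: exhibit $\gamma_{n}$ as a central relation different from $\rho$ such that the inclusions
$\Pol\{\rho,\sigma\}\subsetneq \Pol\{\rho,\gamma_{n}\}\subsetneq \Pol\rho$
hold, contradicting the maximality of $\Pol\{\rho,\sigma\}$ in $\Pol\rho$. Since $\gamma_{n}$ is defined from $\rho$ and $\sigma$ by a primitive positive formula, $\gamma_{n}\in[\{\rho,\sigma\}]$ and hence by the Remark $\Pol\{\rho,\sigma\}\subseteq \Pol\{\rho,\gamma_{n}\}$.

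The heart of the argument is to verify that $\gamma_{n}$ is a central relation. Total symmetry is immediate from the definition, since the existence of $u\in\sigma$ with $(a_{i},u)\in\rho$ for all $i$ does not depend on the order of the $a_{i}$'s. For total reflexivity, I would use the minimality of $n$: if $(a_{1},\ldots,a_{n})\in \iota_{k}^{n}$, then the tuple has at most $n-1$ pairwise distinct entries, which can be listed as a tuple in $E_{k}^{n-1}=\gamma_{n-1}$; the corresponding witness $u\in\sigma$ then also works for $(a_{1},\ldots,a_{n})$, so the tuple lies in $\gamma_{n}$. For the center, any $c\in C_{\rho}$ satisfies $(c,u)\in\rho$ for every $u\in\sigma$; since $\gamma_{n-1}=E_{k}^{n-1}$ produces a common $u\in\sigma$ for the remaining $n-1$ coordinates, we get $C_{\rho}\subseteq C_{\gamma_{n}}$, so $C_{\gamma_{n}}\neq\emptyset$. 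Finally, $C_{\gamma_{n}}\subsetneq E_{k}$ because $C_{\gamma_{n}}=E_{k}$ would force $\gamma_{n}=E_{k}^{n}$, contradicting the assumption.

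Once $\gamma_{n}$ is established as a central relation, it is distinct from $\rho$ (different arity, since $n>2$), so $\Pol\rho$ and $\Pol\gamma_{n}$ are distinct maximal clones by the quoted fact from Rosenberg's classification. Therefore $\Pol\rho\not\subseteq\Pol\gamma_{n}$, giving an operation $f\in\Pol\rho\setminus\Pol\gamma_{n}$, which yields $\Pol\{\rho,\gamma_{n}\}\subsetneq\Pol\rho$. On the other side, pick any $a\in E_{k}\setminus\sigma$: the constant $c_{a}$ preserves every totally reflexive relation, hence $c_{a}\in\Pol\{\rho,\gamma_{n}\}$, but $c_{a}\notin\Pol\sigma$ since $a\notin\sigma$; therefore $\Pol\{\rho,\sigma\}\subsetneq\Pol\{\rho,\gamma_{n}\}$. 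Combining the two strict inclusions contradicts the assumed maximality of $\Pol\{\rho,\sigma\}$ in $\Pol\rho$.

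The main obstacle is the verification that $\gamma_{n}$ is totally reflexive, because this is where the minimality of $n$ (i.e., the hypothesis $\gamma_{n-1}=E_{k}^{n-1}$) really enters the argument; the rest of the steps are structural and parallel what was done in Lemma~\ref{lem 5} with $\gamma_{2}$ replaced by $\gamma_{n}$.
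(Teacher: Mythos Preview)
Your argument is correct and follows the same overall architecture as the paper's proof: both verify that $\gamma_{n}$ is totally symmetric and totally reflexive (via $\gamma_{n-1}=E_{k}^{n-1}$), use the constant $c_{a}$ with $a\notin\sigma$ to obtain $\Pol\{\rho,\sigma\}\subsetneq\Pol\{\rho,\gamma_{n}\}$, and then establish $\Pol\{\rho,\gamma_{n}\}\subsetneq\Pol\rho$.

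The one genuine difference is in this last step. The paper does not bother to check that $\gamma_{n}$ has a nonempty center; instead it builds an explicit witness $f\in\Pol\rho\setminus\Pol\gamma_{n}$ by choosing $(a,b)\notin\rho$, setting $\mbox{\boldmath{$w$}}_{i}=(a,\ldots,a,b,a,\ldots,a)$ with $b$ in position $i$, and mapping $\mbox{\boldmath{$w$}}_{i}\mapsto u_{i}$ (everything else to $c\in C_{\rho}$) for some fixed $(u_{1},\ldots,u_{n})\notin\gamma_{n}$. Your route is more structural: you verify the additional facts $C_{\rho}\subseteq C_{\gamma_{n}}$ and $C_{\gamma_{n}}\neq E_{k}$, conclude that $\gamma_{n}$ is a central relation of arity $n>2$, and invoke Rosenberg's theorem to get that $\Pol\rho$ and $\Pol\gamma_{n}$ are distinct maximal clones, hence $\Pol\{\rho,\gamma_{n}\}\subsetneq\Pol\rho$. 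This is exactly the device the paper itself uses later (e.g.\ in Lemma~\ref{lem 15}, part~1, and Lemma~\ref{lem 32}), so it is entirely in keeping with the paper's toolkit. Your version trades the explicit construction for a short extra verification about the center; both are clean, and neither is obviously superior.
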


\begin{proof}
 $\gamma_{n}$ is totally symmetric by definition. Let  $a_{1},\ldots, a_{n-1}\in E_{k}$. We will show that
$(a_{1},\ldots,a_{n-1},a_{n-1})\in\gamma_{n}$.
$(a_{1},\ldots,a_{n-1})\in E_{k}^{n-1}=\gamma_{n-1}$, thus there
exists $u\in\sigma$ such that
$\{(a_{1},u),\ldots,(a_{n-1},u)\}\subseteq\rho$. We deduce that
$(a_{1},\ldots,a_{n-1},a_{n-1})\in\gamma_{n}$  and then total
symmetry of
 $\gamma_{n}$ implies that $\gamma_{n}$ is totally reflexive.

 Since $\gamma_{n}\in [\{\rho, \sigma\}]$, we have $\Pol\{\rho, \sigma\}\subseteq
\Pol\{\rho, \gamma_{n}\}\subseteq \Pol\rho$. Let $a\in
E_{k}\setminus\sigma$; $c_{a}\in \Pol\{\rho, \gamma_{n}\}$ and
$c_{a}\not\in \Pol\sigma$. Let $(a,b)\in E_{k}^{2}\setminus\rho$. We
set
$\mbox{\boldmath{$w$}}_{1}=(b,a,\ldots,a),\ldots,\mbox{\boldmath{$w$}}_{n}=(a,a,\ldots,a,b)$.
Let $c\in C_{\rho}$, $(u_{1},\ldots,u_{n})\in
E_{k}^{n}\setminus\gamma_{n}$. Since
$\mbox{\boldmath{$w$}}_{1},\ldots,\mbox{\boldmath{$w$}}_{n}$ are
pairwise distinct, the following $n$-ary operation $f$ on $E_{k}$ is
well defined.
\[
f(\mbox{\boldmath{$x$}})=
\begin{cases}
u_{i} & \text{if }\mbox{\boldmath{$x$}}=\mbox{\boldmath{$w$}}_{i}\text{ for some }1\leq i\leq n, \\
c & \text{otherwise}.
\end{cases}
\]
$\{\mbox{\boldmath{$w$}}_{1},\ldots,\mbox{\boldmath{$w$}}_{n}\}\subseteq\gamma_{n}$
and
$f(\mbox{\boldmath{$w$}}_{1},\ldots,\mbox{\boldmath{$w$}}_{n})=(f(\mbox{\boldmath{$w$}}_{1}),\ldots,f(\mbox{\boldmath{$w$}}_{n}))=(u_{1},\ldots,u_{n})\not\in\gamma_{n}$.\\
So $f\not\in \Pol\gamma_{n}$. Let
$\mbox{\boldmath{$x$}}=(a_{1},\ldots,a_{n})$ and
$\mbox{\boldmath{$y$}}=(b_{1},\ldots,b_{n})$ such that
$(\mbox{\boldmath{$x$}},\mbox{\boldmath{$y$}})\in\rho$. We will show
that $(f(\mbox{\boldmath{$x$}}),f(\mbox{\boldmath{$y$}}))\in\rho$.
By the construction of $\mbox{\boldmath{$w$}}_{i}$,
$(f(\mbox{\boldmath{$x$}}),f(\mbox{\boldmath{$y$}}))\in\{(u_{i},u_{i}),(u_{i},c),(c,c),(c,u_{i})\colon
 1\leq i\leq n\}\subseteq\rho$ (due to  $(\mbox{\boldmath{$w$}}_{i},\mbox{\boldmath{$w$}}_{j})\not\in\rho$ for all $i \neq j$).
Hence  $f\in \Pol\rho$ and $\Pol\{\rho, \sigma\}\subsetneq
\Pol\{\rho, \gamma_{n}\}\subsetneq \Pol\rho$. Therefore $\Pol\{\rho,
\sigma\}$ is not a submaximal clone of $\Pol\rho$.
\end{proof}

Hence, in case (2.2) we have $\gamma_{k}=E_{k}^{k}$.
\begin{lemma}\label{lem 7}
Under the assumptions of Proposition \ref{prop 2}, $\rho$ being
binary and $\gamma_{k}=E_{k}^{k}$, $\sigma$ is of type I.
\end{lemma}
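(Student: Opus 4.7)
The plan is to exploit the hypothesis $\gamma_k=E_k^k$ directly by feeding in the $k$-tuple that enumerates $E_k$. Since $\gamma_k$ consists of those $(a_1,\ldots,a_k)$ for which some $u\in\sigma$ is $\rho$-related to every $a_i$, the assumption $\gamma_k=E_k^k$ says that for \emph{any} $k$ chosen elements we can find such a witness $u$. Applying this to a tuple listing every element of $E_k$ will produce a $u\in\sigma$ that is $\rho$-related to every element of $E_k$.

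Concretely, I would fix an enumeration $E_k=\{e_1,\ldots,e_k\}$ and observe that $(e_1,\ldots,e_k)\in E_k^k=\gamma_k$. Unfolding the definition of $\gamma_k$ yields $u\in\sigma$ with $(e_i,u)\in\rho$ for every $i\in\underline{k}$; equivalently $(a,u)\in\rho$ for all $a\in E_k$. Because $\rho$ is binary and totally symmetric (i.e.\ symmetric), this gives $(u,a)\in\rho$ for all $a\in E_k$, which is exactly the condition defining $u\in C_\rho$. Therefore $u\in C_\rho\cap\sigma$, so $C_\rho\cap\sigma\neq\emptyset$, and since $\sigma$ is unary by hypothesis of Proposition~\ref{prop 2} this places $\sigma$ in type~I.

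There is no real obstacle here beyond correctly parsing the definition of $\gamma_k$ and invoking total symmetry of $\rho$ to upgrade ``$u$ is on the right of every pair'' to ``$u$ is central''. The argument does not use the maximality hypothesis on $\Pol\{\rho,\sigma\}$ at all; all the work has already been done in Lemmas~\ref{lem 2}--\ref{lem 6}, which successively rule out every alternative and force us into the regime $\gamma_2=E_k^2$, $\gamma_k=E_k^k$. In that regime the existence of a common $\rho$-neighbour to all of $E_k$ inside $\sigma$ is automatic, so the lemma reduces to a one-line verification from the definitions.
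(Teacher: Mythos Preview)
Your proof is correct and essentially identical to the paper's: both take the $k$-tuple enumerating all of $E_k$, apply $\gamma_k=E_k^k$ to extract a witness $u\in\sigma$ with $(a,u)\in\rho$ for every $a\in E_k$, and conclude $u\in C_\rho\cap\sigma$. Your explicit invocation of symmetry to pass from $(a,u)\in\rho$ to $(u,a)\in\rho$ is a detail the paper leaves implicit, but otherwise there is no difference.
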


\begin{proof}
We have $\{0,1,\ldots,k-1\}^{k}=E_{k}^{k}=\gamma_{k}$; hence there exists $u\in\sigma$ such that\\
 $\{(0,u),\ldots,(k-1,u)\}\subseteq\rho$. Thus $u\in C_{\rho}$ and $\sigma$ is of type I.
\end{proof}
Now we study the subcase (2.3) $\gamma_{2}=\rho$. Let $\Gamma=\{
B\subseteq E_{k}\colon B^{2}\subseteq\rho\}$ and $m=\max\{
\Card(B)\colon B\in \Gamma\}$. We have  $m\geq 2$; for all
$l\in\{2,3,\ldots,m\}$ we set:
$$\rho_{l}=\{(a_{1},\ldots,a_{l})\in E_{k}^{l}:
\{a_{1},\ldots,a_{l}\}^{2}\subseteq\rho\}.$$ Since
$\gamma_{2}=\rho_{2}=\rho$, we have $\gamma_{m}\subseteq \rho_{m}$.
\begin{lemma}\label{lem 8}
Under the assumptions of Proposition \ref{prop 2} and $\rho$ being
binary, the case $\gamma_{m}\subsetneq\rho_{m}$ is impossible.
\end{lemma}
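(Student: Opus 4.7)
The plan is to derive a contradiction from $\gamma_m \subsetneq \rho_m$ by exhibiting the chain
$\Pol\{\rho,\sigma\} \subsetneq \Pol\{\rho,\gamma_m\} \subsetneq \Pol\rho$,
contradicting the maximality of $\Pol\{\rho,\sigma\}$ in $\Pol\rho$. The first inclusion is automatic because $\gamma_m$ is primitive positively definable from $\rho$ and $\sigma$ (existentially quantify $u\in\sigma$), so $\gamma_m\in[\{\rho,\sigma\}]$. For its strictness I would pick any $a\in E_k\setminus\sigma$ (nonempty, since $\sigma$ is a unary central relation and hence a proper subset of $E_k$) and observe that the constant operation $c_a$ preserves $\rho$ by reflexivity, preserves $\gamma_m$ because $\tau=E_k$ supplies some $u\in\sigma$ with $(a,u)\in\rho$ (hence $(a,\ldots,a)\in\gamma_m$), yet $c_a\notin\Pol\sigma$.

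The heart of the argument is to construct $f\in\Pol\rho\setminus\Pol\gamma_m$. I would pick $(d_1,\ldots,d_m)\in\rho_m\setminus\gamma_m$, pick $a\in C_\rho$ and $b\in E_k\setminus\{a\}$ (so $(a,b)\in\rho$ because $a\in C_\rho$), and for each $1\leq j\leq m$ set $\mbox{\boldmath{$w$}}_j=(a,\ldots,a,b,a,\ldots,a)\in E_k^m$ with $b$ in position $j$. The $\mbox{\boldmath{$w$}}_j$ are pairwise distinct since $a\neq b$. Define the $m$-ary operation
\[
f(\mbox{\boldmath{$x$}})=
\begin{cases}
d_j & \text{if } \mbox{\boldmath{$x$}}=\mbox{\boldmath{$w$}}_j \text{ for some } 1\leq j\leq m,\\
a & \text{otherwise.}
\end{cases}
\]

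To verify $f\in\Pol\rho$, a short case analysis on a coordinatewise $\rho$-related pair $(\mbox{\boldmath{$P$}},\mbox{\boldmath{$Q$}})\in E_k^m\times E_k^m$ suffices: if both inputs are among the $\mbox{\boldmath{$w$}}_i$, say $\mbox{\boldmath{$P$}}=\mbox{\boldmath{$w$}}_i$ and $\mbox{\boldmath{$Q$}}=\mbox{\boldmath{$w$}}_j$, then $(f(\mbox{\boldmath{$P$}}),f(\mbox{\boldmath{$Q$}}))=(d_i,d_j)\in\rho$ since $\{d_1,\ldots,d_m\}^2\subseteq\rho$; in every other case at least one of $f(\mbox{\boldmath{$P$}}),f(\mbox{\boldmath{$Q$}})$ equals $a\in C_\rho$, so the image lies in $\rho$. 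To see $f\notin\Pol\gamma_m$, note that each $\mbox{\boldmath{$w$}}_j$ itself lies in $\gamma_m$ (its entries take only the two values $a,b$ with $(a,b)\in\gamma_2=\rho$, so a common $u\in\sigma$ exists); and by the symmetry of the construction, $\mbox{\boldmath{$w$}}_j[k]=b$ iff $j=k$, so the $k$-th column of the matrix $[\mbox{\boldmath{$w$}}_1\,\cdots\,\mbox{\boldmath{$w$}}_m]$ equals $\mbox{\boldmath{$w$}}_k$; applying $f$ coordinatewise to $(\mbox{\boldmath{$w$}}_1,\ldots,\mbox{\boldmath{$w$}}_m)$ therefore returns $(f(\mbox{\boldmath{$w$}}_1),\ldots,f(\mbox{\boldmath{$w$}}_m))=(d_1,\ldots,d_m)\notin\gamma_m$.

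The main obstacle is that, in contrast to Lemma~\ref{lem 6} where $\gamma_2=E_k^2$ allowed any two elements to share a common $\sigma$-neighbor, here $\gamma_2=\rho$ forces any $m$ tuples whose rows lie in $\gamma_m$ to have pairwise $\rho$-related columns, so the Lemma~\ref{lem 6} trick of making the $\mbox{\boldmath{$w$}}_j$'s pairwise $\rho$-unrelated and choosing arbitrary outputs is unavailable. The workaround is to force the output tuple $(d_1,\ldots,d_m)$ to be a $\rho$-chain, so that the compatibility constraint $(d_i,d_j)\in\rho$ is satisfied automatically; this is exactly what $(d_1,\ldots,d_m)\in\rho_m$ provides, and is why the relation one must rule out at this step is $\gamma_m\subsetneq\rho_m$.
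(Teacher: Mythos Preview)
Your proof is correct and follows essentially the same approach as the paper's. The paper's version is slightly terser: it picks any $(a,b)\in\rho\setminus\iota_k^2$ (rather than $a\in C_\rho$) and works at the least arity $n$ with $\gamma_n\subsetneq\rho_n$ rather than at $m$ itself, but neither change is essential---your argument at arity $m$ goes through because each $\mbox{\boldmath{$w$}}_j$ has only two distinct entry values, so membership in $\gamma_m$ reduces to $\gamma_2=\rho$, and the key observation that $(d_i,d_j)\in\rho$ comes for free from $(d_1,\ldots,d_m)\in\rho_m$ is exactly what the paper uses.
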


\begin{proof}
It is similar to the proof of Lemma~\ref{lem 6}. Choose
$(a,b)\in\rho\setminus\tau_{k}^{2}$ and
$(u_{1},\ldots,u_{n})\in\rho_{n}\setminus\gamma_{n}$ for the least
$n$ such that $\gamma_{n}\varsubsetneq\rho_{n}$. The function $f$
defined in the proof of Lemma \ref{lem 6} preserves $\rho$, but not
$\gamma_{n}$ because $\{u_{1},\ldots,u_{n}\}$ is a $\rho$-chain,
$(\omega_{1},\ldots,\omega_{n})\in~\gamma_{n}$, and
$(f(\omega_{1}),\ldots,f(\omega_{n}))=(u_{1},\ldots,u_{n})\notin\gamma_{n}$
. Hence $\Pol\{\rho, \gamma_{n}\}\varsubsetneq\Pol\rho$. Therefore
$\Pol\{\rho, \sigma\}\varsubsetneq\Pol\{\rho,
\gamma_{n}\}\varsubsetneq\Pol\rho$, contradicting the submaximality
of $\Pol\{\rho, \sigma\}$.
\end{proof}
From Lemma \ref{lem 8}, we conclude that $\gamma_{m}=\rho_{m}$.
\begin{lemma}\label{lem 9}
Under the assumptions of Proposition \ref{prop 2}, $\rho=\gamma_{2}$
and $\gamma_{m}=\rho_{m}$, we have that for all maximal
$\rho$-chains $B$, $B\cap\sigma\neq \emptyset$.
\end{lemma}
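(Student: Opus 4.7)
The plan is to exploit the hypothesis $\gamma_m = \rho_m$ to produce, for any maximal $\rho$-chain $B$, an element $u \in \sigma$ that is $\rho$-related to every element of $B$; the maximality of $B$ will then force $u$ itself to lie in $B$.

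First, I would fix a maximal $\rho$-chain $B \subseteq E_k$ and set $l = \Card(B)$, so that $l \le m$ by the definition of $m$. Writing $B = \{b_1,\ldots,b_l\}$, I would form the $m$-tuple obtained by padding with repetitions, namely $(b_1,\ldots,b_l,b_1,\ldots,b_1) \in E_k^m$. Its set of coordinates is exactly $B$, so $\{b_1,\ldots,b_l\}^2 \subseteq \rho$ holds and this tuple lies in $\rho_m$.

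Next, applying the hypothesis $\gamma_m = \rho_m$ to that tuple yields some $u \in \sigma$ with $(b_i,u) \in \rho$ for every $1 \le i \le l$. I would then verify that $B \cup \{u\}$ is itself a $\rho$-chain: $B^2 \subseteq \rho$ by hypothesis; $(b_i,u), (u,b_i) \in \rho$ for all $i$ by the total symmetry of $\rho$; and $(u,u) \in \rho$ since $\rho$ contains $\iota_k^2$ by total reflexivity. Hence $(B \cup \{u\})^2 \subseteq \rho$, and the maximality of $B$ forces $u \in B$, giving $u \in B \cap \sigma$.

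There is no real obstacle here beyond bookkeeping: the key points are that a maximal $\rho$-chain has size at most $m$ (so we can always form a length-$m$ tuple from it), and that the hypothesis $\gamma_m = \rho_m$ is precisely what converts the combinatorial $\rho$-chain condition into the existence of a common $\sigma$-neighbor. The only subtlety worth flagging is the use of total reflexivity of $\rho$ (to get $(u,u) \in \rho$) and total symmetry (to get $(u,b_i) \in \rho$ from $(b_i,u) \in \rho$), both of which are available because $\rho$ is a binary central relation.
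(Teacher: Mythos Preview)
Your proof is correct and follows essentially the same approach as the paper: pad a maximal $\rho$-chain to an $m$-tuple in $\rho_m=\gamma_m$, extract the witness $u\in\sigma$, and use maximality of $B$ to conclude $u\in B$. The paper's version is slightly terser (it just says ``by duplicating entries'' and omits the explicit verification that $B\cup\{u\}$ is a $\rho$-chain), but the argument is the same.
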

\begin{proof}
Let $B=\{a_{1},\ldots,a_{n}\}$ be a maximal $\rho$-chain with $n\leq
m$ elements. By duplicating entries, we can find
$u_{1},\ldots,u_{m}$ such that
$(u_{1},\ldots,u_{m})\in\rho_{m}=\gamma_{m}$ and
$\{u_{1},\ldots,u_{m}\}=\{a_{1},\ldots,a_{n}\}$. Consequently, there
exists $u\in\sigma$ such that
$\{(u_{1},u),\ldots,(u_{m},u)\}\subseteq\rho$; hence $
(a_{1},u),\ldots,(a_{n},u)\in\rho$. Therefore $B\cup\{u\}$ is a
$\rho$-chain containing $B$. By maximality,  $u\in B$ and
$B\cap\sigma\neq \emptyset$. We conclude that $\sigma$ is of type
II.
\end{proof}
Secondly we suppose that $\rho$ is a $h$-ary central relation with $3\leq h$. We distinguish two subcases:

(1) $C_{\rho}\cap\sigma\neq \emptyset$ or (2) $C_{\rho}\cap\sigma=\emptyset$.

The first one is easy; the second one we shall prove to be in
contradiction with $h>2$.

\begin{lemma}\label{lem 10}
Under the assumptions of Proposition \ref{prop 2}, $h>2$ and
$C_{\rho}\cap\sigma\neq\emptyset$, we obtain a relation of type I.
\end{lemma}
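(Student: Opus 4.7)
The plan is essentially to observe that under the stated hypotheses, $\sigma$ already satisfies the definition of type~I relation by inspection. Recall that a central relation $\sigma$ is of type~I (as given in Theorem~\ref{theo 1}(I)) precisely when $\sigma$ is unary and $C_{\rho}\cap\sigma\neq\emptyset$.

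Proposition~\ref{prop 2} stipulates that $\sigma$ is a unary central relation on $E_{k}$, which takes care of the first condition of type~I. The second condition, $C_{\rho}\cap\sigma\neq\emptyset$, is literally the standing assumption of the lemma. Thus the conclusion follows immediately from the definition, with no use of $h>2$ (that hypothesis is only what delineates this case from the binary analysis carried out in Lemmas~\ref{lem 2}--\ref{lem 9}).

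There is no real obstacle here: no construction of auxiliary operations or relations is required, and no appeal to the submaximality of $\Pol\{\rho,\sigma\}$ is needed in this direction. The role of this lemma in the overall argument is simply to record the ``easy'' subcase of the split
\[
C_{\rho}\cap\sigma\neq\emptyset \qquad \text{or} \qquad C_{\rho}\cap\sigma=\emptyset,
\]
so that the remaining work (in the next lemmas) can focus on deriving a contradiction from $C_{\rho}\cap\sigma=\emptyset$ together with $h>2$. Accordingly, the proof I would write consists of a single sentence invoking the definition of type~I and quoting the two hypotheses.
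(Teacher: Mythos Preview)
Your proposal is correct and matches the paper's own proof, which consists of the single sentence ``$\sigma$ obviously satisfies the condition I of Theorem~\ref{theo 1}.'' You have simply unpacked that sentence: $\sigma$ is unary by the hypotheses of Proposition~\ref{prop 2}, and $C_{\rho}\cap\sigma\neq\emptyset$ is assumed, so type~I holds by definition.
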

\begin{proof}
$\sigma$ obviously satisfies the condition I of Theorem \ref{theo
1}.
\end{proof}
Now, we suppose that $C_{\rho}\cap\sigma=\emptyset$. Let
$\omega\in\sigma$, there exist $a_{2},\ldots,a_{h}\in E_{k}$
 such that $(\omega,a_{2},\ldots,a_{h})\not\in\rho$. For $n\geq h-1$, we set:
\[\alpha_{n}=\{(b_{1},\ldots,b_{n})\in E_{k}^{n}:\exists
u\in\sigma\forall 1\leq i_{1} <i_{2}<\ldots < i_{h-1}\leq n,
(u,b_{i_{1}},\ldots,b_{i_{h-1}})\in\rho \}.\] For $n\geq h-1$,
$\alpha_{n}$ is totally symmetric. $\alpha_{h-1}$ is totally
reflexive and
$(\omega,a_{2},\ldots,a_{h-1})\in\alpha_{h-1}\setminus\iota_{k}^{h-1}$.
Therefore $\iota_{k}^{h-1}\subsetneq\alpha_{h-1}\subsetneq
E_{k}^{h-1}$ or $\alpha_{h-1}=E_{k}^{h-1}$.

\begin{lemma}\label{lem 11}
 Under the assumptions of Proposition \ref{prop 2}, $h>2$ and $C_{\rho}\cap\sigma=\emptyset$, the subcase
  $\iota_{k}^{h-1}\subsetneq\alpha_{h-1}\subsetneq E_{k}^{h-1}$ is impossible.
\end{lemma}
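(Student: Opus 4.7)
The plan is to mimic the strategy of Lemma~\ref{lem 6}: interpose the relation $\alpha_{h-1}$ strictly between $\Pol\{\rho,\sigma\}$ and $\Pol\rho$, thereby contradicting the maximality of $\Pol\{\rho,\sigma\}$. Since $\alpha_{h-1}$ is defined by the primitive positive formula $\exists u\,(u\in\sigma\wedge(u,b_{1},\ldots,b_{h-1})\in\rho)$, it lies in $[\{\rho,\sigma\}]$, so $\Pol\{\rho,\sigma\}\subseteq\Pol\{\rho,\alpha_{h-1}\}\subseteq\Pol\rho$. The first inclusion is strict: for any $a\in E_{k}\setminus\sigma$, the unary constant operation $c_{a}$ preserves $\rho$ and $\alpha_{h-1}$ (both being totally reflexive) but does not preserve $\sigma$.

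The main step is to produce $f\in\Pol\rho\setminus\Pol\alpha_{h-1}$. Pick $(u_{1},\ldots,u_{h-1})\in E_{k}^{h-1}\setminus\alpha_{h-1}$, which exists because $\alpha_{h-1}\subsetneq E_{k}^{h-1}$. The elements $\omega,a_{2},\ldots,a_{h-1}$ are pairwise distinct, since $(\omega,a_{2},\ldots,a_{h})\notin\rho\supseteq\iota_{k}^{h}$. For $1\leq i\leq h-1$ let $\mathbf{w}_{i}$ denote the $(i-1)$-fold cyclic left shift of $(\omega,a_{2},\ldots,a_{h-1})$. The $\mathbf{w}_{i}$ are pairwise distinct, and each column of the $(h-1)\times(h-1)$ matrix whose rows are $\mathbf{w}_{1},\ldots,\mathbf{w}_{h-1}$ is again a cyclic shift of $(\omega,a_{2},\ldots,a_{h-1})$, hence lies in $\alpha_{h-1}$ by total symmetry. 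Fix $c\in C_{\rho}$ and define
\[
f(\mathbf{x})=\begin{cases} u_{i} & \text{if }\mathbf{x}=\mathbf{w}_{i}\text{ for some }1\leq i\leq h-1,\\ c & \text{otherwise.}\end{cases}
\]

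Applying $f$ columnwise to the above matrix yields $(u_{1},\ldots,u_{h-1})\notin\alpha_{h-1}$, so $f\notin\Pol\alpha_{h-1}$. To verify $f\in\Pol\rho$, take arbitrary $\mathbf{b}_{1},\ldots,\mathbf{b}_{h-1}\in\rho$; these form the columns of an $h\times(h-1)$ matrix whose $h$ rows are vectors in $E_{k}^{h-1}$. If some row is not of the form $\mathbf{w}_{i}$, then $f$ outputs $c\in C_{\rho}$ on it and the resulting $h$-tuple lies in $\rho$ by centrality. Otherwise all $h$ rows are drawn from the $h-1$ spikes $\mathbf{w}_{1},\ldots,\mathbf{w}_{h-1}$, and by the pigeonhole principle two rows coincide, so the output $h$-tuple has a repeated coordinate and belongs to $\iota_{k}^{h}\subseteq\rho$. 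Hence $\Pol\{\rho,\sigma\}\subsetneq\Pol\{\rho,\alpha_{h-1}\}\subsetneq\Pol\rho$, contradicting submaximality. The delicate part is the choice of spikes: one must find $h-1$ pairwise distinct tuples whose matrix columns all lie in $\alpha_{h-1}$, so that the pigeonhole argument compares $h$ rows to only $h-1$ spikes — this is precisely where the arity gap between $\rho$ and $\alpha_{h-1}$ (equivalently, the hypothesis $h>2$) is used.
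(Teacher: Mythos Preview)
Your argument is correct, but it takes a more elaborate route than the paper. The paper uses a \emph{unary} operation: choosing any $(a_{2},\ldots,a_{h})\in E_{k}^{h-1}\setminus\alpha_{h-1}$ and any $(u_{2},\ldots,u_{h})\in\alpha_{h-1}\setminus\iota_{k}^{h-1}$, it defines $f(x)=a_{i}$ if $x=u_{i}$ and $f(x)=a_{2}$ otherwise. Since the image of $f$ has at most $h-1$ elements, every $h$-tuple of outputs has a repeated coordinate, so $f\in\Pol\rho$ by total reflexivity alone---no pigeonhole bookkeeping on rows is needed. And $f$ maps the single tuple $(u_{2},\ldots,u_{h})\in\alpha_{h-1}$ to $(a_{2},\ldots,a_{h})\notin\alpha_{h-1}$.

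Your construction instead builds an $(h-1)$-ary operation via cyclic shifts of $(\omega,a_{2},\ldots,a_{h-1})$, in the style of Lemmas~\ref{lem 6}, \ref{lem 12} and \ref{lem 13}. This works, but the machinery is heavier than necessary here: the point of those later lemmas is that the auxiliary relation has arity $\geq h$, so a unary function with small image cannot violate it, forcing a higher-arity witness. In the present lemma $\alpha_{h-1}$ has arity $h-1<h$, so the arity gap already lets a unary function with $(h-1)$-element image do the job. Your approach has the virtue of being uniform with the harder cases, while the paper's exploits the specific arity drop to give a two-line argument. One small omission in your write-up: you use that $(\omega,a_{2},\ldots,a_{h-1})\in\alpha_{h-1}$ without saying why---this holds by taking $u=\omega\in\sigma$ and noting $(\omega,\omega,a_{2},\ldots,a_{h-1})\in\iota_{k}^{h}\subseteq\rho$, which the paper records just before the lemma.
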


\begin{proof}
It is easy to check that $\Pol\{\rho, \sigma\}\subsetneq \Pol\{\rho,
\alpha_{h-1}\}\subseteq \Pol\rho$. Let $(a_{2},\ldots,a_{h})\in
E_{k}^{h-1}\setminus\alpha_{h-1},
(u_{2},\ldots,u_{h})\in\alpha_{h-1}\setminus\iota_{k}^{h-1}$ and
define the unary operation $f$ on $E_{k}$ by
\[
f(x)=
\begin{cases}
a_{i} & \text{if }x=u_{i}\text{ for some }2\leq i\leq h,\\
a_{2} & \text{otherwise.}
\end{cases}
\]
$f\in \Pol\rho$ by total reflexivity of $\rho$.
$(u_{2},\ldots,u_{h})\in\alpha_{h-1}$
 and $(f(u_{2}),\ldots,f(u_{h}))=(a_{2},\ldots,a_{h})\not\in\alpha_{h-1}$. So $f\in \Pol\rho\setminus \Pol\alpha_{h-1}$.
Thus $\Pol\{\rho,\sigma\}\subsetneq \Pol\{\rho,
\alpha_{h-1}\}\subsetneq \Pol\rho$; contradicting the maximality of
$\Pol\{\rho, \sigma\}$ in $\Pol\rho$.
\end{proof}
We continue with the subcase $\alpha_{h-1}=E_{k}^{h-1}$.
 It is easy to see that for all $j$ such that $h-1\leq j\leq n$, if $\alpha_{j}=E_{k}^{j}$, then
$\alpha_{j+1}$ is totally reflexive. Since
$\alpha_{h-1}=E_{k}^{h-1}$, there exists $u\in\sigma$ such that
$(u,a_{2},\ldots,a_{h})\in\rho$. We obtain the following two
subcases:\\
\textbf{Case 1:} for all $j\in \{2,\ldots,h\}, (\omega,a_{2},\ldots,a_{j-1},u,a_{j+1},\ldots,a_{h})\in\rho$;\\
\textbf{Case 2:} there exists $j\in\{2,\ldots,h\}$ such that
 $(\omega,a_{2},\ldots,a_{j-1},u,a_{j+1},\ldots,a_{h})\not\in\rho$.

We will study these two cases in the following two lemmas. Firstly
we study \textbf{Case~ 1}. We suppose that for all $j\in
\{2,\ldots,h\},
(\omega,a_{2},\ldots,a_{j-1},u,a_{j+1},\ldots,a_{h})\in~\rho$.

\begin{lemma}\label{lem 12}
Under the assumptions of Proposition \ref{prop 2}, $h> 2$ and
$C_{\rho}\cap\sigma=\emptyset$, \textbf{Case 1}, i.e., there exist
$\omega, u\in\sigma, a_{2},\ldots,a_{h}\in E_{k}$ such that
$(\omega,a_{2},\ldots,a_{h})\not\in\rho,
(u,a_{2},\ldots,a_{h})\in\rho$ and for all $j\in\{2,\ldots,h\},
(\omega,a_{2},\ldots,a_{j-1},u,a_{j+1},\ldots,a_{h})\in\rho$, is
impossible.
\end{lemma}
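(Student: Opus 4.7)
The plan is to derive a contradiction with the maximality of $\Pol\{\rho,\sigma\}$ in $\Pol\rho$ by exhibiting a relation $\delta\in [\{\rho,\sigma\}]$ with $\Pol\{\rho,\sigma\}\subsetneq \Pol\{\rho,\delta\}\subsetneq \Pol\rho$. Case~1 immediately supplies $(\omega,a_{2},\ldots,a_{h})\in \alpha_{h}$ with witness $v=u$ while $(\omega,a_{2},\ldots,a_{h})\not\in\rho$, so a natural candidate is one of the relations $\alpha_{n}$ introduced before Lemma~\ref{lem 11}. To avoid the degenerate possibility $\alpha_{h}=E_{k}^{h}$, I would take $N$ to be the least integer $n\geq h$ with $\alpha_{n}\subsetneq E_{k}^{n}$ and set $\delta=\alpha_{N}$.

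Such an $N$ with $N\leq k$ must exist: otherwise $\alpha_{k}=E_{k}^{k}$, and choosing $(b_{1},\ldots,b_{k})=(0,1,\ldots,k-1)$ would produce a $v\in\sigma$ with $(v,c_{1},\ldots,c_{h-1})\in\rho$ for every $(h-1)$-subset $\{c_{1},\ldots,c_{h-1}\}$ of $E_{k}$; combined with total reflexivity of $\rho$, this forces $v\in C_{\rho}\cap\sigma$, contradicting $C_{\rho}\cap\sigma=\emptyset$. By the paper's remark preceding Case~1, $\alpha_{N}$ is totally reflexive, and it is totally symmetric by construction, so $\iota_{k}^{N}\subseteq \alpha_{N}\subsetneq E_{k}^{N}$ (with strict containment in $\iota_{k}^{N}$ witnessed by $(\omega,a_{2},\ldots,a_{h})$ when $N=h$, and by extending this tuple using $\alpha_{N-1}=E_{k}^{N-1}$ when $N>h$). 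Since $\delta\in [\{\rho,\sigma\}]$ we have $\Pol\{\rho,\sigma\}\subseteq \Pol\{\rho,\delta\}\subseteq \Pol\rho$, and the first inclusion is strict: for $a\in E_{k}\setminus\sigma$, the constant $c_{a}$ preserves both $\rho$ and $\delta$ by their total reflexivity but fails to preserve $\sigma$.

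The key remaining step is to produce $f\in \Pol\rho\setminus\Pol\delta$, giving $\Pol\{\rho,\delta\}\subsetneq \Pol\rho$. Adapting the technique of Lemma~\ref{lem 6}, I would pick $(u_{1},\ldots,u_{N})\in \alpha_{N}\setminus\iota_{k}^{N}$, pick $(\xi_{1},\ldots,\xi_{N})\in E_{k}^{N}\setminus \alpha_{N}$, fix $c\in C_{\rho}$, and construct pairwise distinct $N$-tuples $\mbox{\boldmath{$w$}}_{1},\ldots,\mbox{\boldmath{$w$}}_{N}$ built from the alphabet of the bad tuple $(\omega,a_{2},\ldots,a_{h})$ so that (i) each $\mbox{\boldmath{$w$}}_{i}\in \alpha_{N}$ and the diagonal of the matrix $[\mbox{\boldmath{$w$}}_{1}\,\cdots\,\mbox{\boldmath{$w$}}_{N}]$ forces $f(\mbox{\boldmath{$w$}}_{i})=\xi_{i}$, and (ii) for any $h$ indices $i_{1},\ldots,i_{h}$ not all equal, some row of the column-matrix $[\mbox{\boldmath{$w$}}_{i_{1}}\,\cdots\,\mbox{\boldmath{$w$}}_{i_{h}}]$ fails to lie in $\rho$. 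Defining $f$ to send $\mbox{\boldmath{$w$}}_{i}\mapsto \xi_{i}$ and all other inputs to $c$, condition~(ii) together with $c\in C_{\rho}$ yields $f\in \Pol\rho$ by the same case analysis as in Lemma~\ref{lem 6}, while condition~(i) yields $(f(\mbox{\boldmath{$w$}}_{1}),\ldots,f(\mbox{\boldmath{$w$}}_{N}))=(\xi_{1},\ldots,\xi_{N})\not\in \alpha_{N}$. The principal obstacle is arranging (i) and (ii) simultaneously, since the arity mismatch between the $h$-ary $\rho$ and the $N$-ary $\alpha_{N}$ blocks a direct transcription of Lemma~\ref{lem 6}'s symmetric diagonal design; Case~1's swap hypothesis is precisely what controls which permutations and substitutions of $(\omega,a_{2},\ldots,a_{h})$ land in $\rho$, and this is the place where the Case~1 assumption---as opposed to Case~2---is crucially used.
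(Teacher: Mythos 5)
Your overall strategy is the same as the paper's: take the least $N\geq h$ with $\alpha_{N}\subsetneq E_{k}^{N}$ (your argument that such an $N\leq k$ exists, via $\alpha_{k}=E_{k}^{k}$ forcing an element of $C_{\rho}\cap\sigma$, is exactly the justification the paper leaves implicit), note that $\alpha_{N}\in[\{\rho,\sigma\}]$ is totally reflexive and totally symmetric, get strictness of $\Pol\{\rho,\sigma\}\subseteq\Pol\{\rho,\alpha_{N}\}$ from a constant with value outside $\sigma$, and then contradict maximality by exhibiting $f\in\Pol\rho\setminus\Pol\alpha_{N}$. But that last step is precisely where your proposal stops: you describe desiderata (i) and (ii) for a hypothetical family $\mbox{\boldmath{$w$}}_{1},\ldots,\mbox{\boldmath{$w$}}_{N}$ mimicking the square, symmetric ``diagonal'' design of Lemma \ref{lem 6}, and you yourself flag arranging them simultaneously as ``the principal obstacle'' without resolving it. This is a genuine gap, since the whole content of the lemma beyond routine inclusions is exactly this construction. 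Two further problems with your sketch: first, condition (ii) as stated (``for any $h$ indices not all equal, some row fails to lie in $\rho$'') is unsatisfiable, because if two of the chosen indices coincide, every row restricted to those columns has a repeated entry and hence lies in $\iota_{k}^{h}\subseteq\rho$; the condition is only needed (and only possible) for pairwise distinct indices, repeats being harmless on the image side by total reflexivity. Second, a square $N\times N$ symmetric design over the $h$-letter alphabet $\{\omega,a_{2},\ldots,a_{h}\}$ in which every $h$-subset of columns is ``rainbow'' in some row is a perfect-hash-family-type requirement that is not obviously available with only $N$ rows, so insisting on arity $N$ for $f$ is the wrong move.

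The paper removes the obstacle by abandoning the square design: it takes $f$ of arity $q=\binom{n_{0}}{h}$, with one row $\mbox{\boldmath{$y$}}_{j}$ for each $h$-subset $\{i_{1}^{j}<\cdots<i_{h}^{j}\}$ of $\{1,\ldots,n_{0}\}$, placing $(b_{1},\ldots,b_{h})=(\omega,a_{2},\ldots,a_{h})$ at those positions and $b_{1}=\omega$ elsewhere. Each such row lies in $\alpha_{n_{0}}$ with witness $u$: every $(h-1)$-subset of its entries either has a repeat, or is $\{a_{2},\ldots,a_{h}\}$ (use $(u,a_{2},\ldots,a_{h})\in\rho$), or contains $\omega$ and omits some $a_{j}$ (use the Case~1 tuples and total symmetry) --- this is where Case~1 enters, as you correctly anticipated. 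The $n_{0}$ columns $\mbox{\boldmath{$x$}}_{i}\in E_{k}^{q}$ are pairwise distinct; define $f(\mbox{\boldmath{$x$}}_{i})=v_{i}$ and $f=c\in C_{\rho}$ elsewhere, where $(v_{1},\ldots,v_{n_{0}})\notin\alpha_{n_{0}}$. For any $h$ pairwise distinct columns, the row indexed by that very $h$-subset shows exactly $(b_{1},\ldots,b_{h})\notin\rho$, so no $h$ distinct columns are coordinatewise $\rho$-related and $f\in\Pol\rho$; feeding the $q$ rows into $f$ yields $(v_{1},\ldots,v_{n_{0}})\notin\alpha_{n_{0}}$, so $f\notin\Pol\alpha_{n_{0}}$. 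Your proof would be complete if you replaced your square design by this rows-indexed-by-$h$-subsets construction (or supplied an actual construction meeting your (i)--(ii) for pairwise distinct indices); as written, the decisive step is missing.
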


\begin{proof}
 Since $ C_{\rho}\cap\sigma=\emptyset$ we have $\alpha_{k}\neq E_{k}^{k}$. Let $n_{0}\geq h-1$ be the least integer such that
 $\alpha_{n_{0}}\neq E_{k}^{n_{0}}$.
 Since $\alpha_{h-1}=E_{k}^{h-1}$, we conclude that $n_{0}\geq h$ and $\alpha_{n_{0}}$ is totally reflexive and totally symmetric.
 We will show that $\Pol\{\rho, \sigma\}\subsetneq \Pol\{\rho, \alpha_{n_{0}}\}\subsetneq \Pol\rho$.

Since $\alpha_{n_{0}}\in[\{\rho, \sigma\}]$, we have $\Pol\{\rho,
\sigma\}\subsetneq \Pol\{\rho, \alpha_{n_{0}}\}$. Let
$(b_{1},\ldots,b_{h}):=(\omega,a_{2},\ldots,a_{h})\in
E_{k}^{h}\setminus\rho$, $W=\{(i_{1},\ldots,i_{h}): 1\leq
i_{1}<i_{2}<\cdots<i_{h}\leq n_{0}\}$. We set
$W=\{(i^{j}_{1},\ldots,i_{h}^{j}):1\leq j\leq q\}$. For all $1\leq
j\leq q$ we set
$\mbox{\boldmath{$y$}}_{j}=(x_{j,1},\ldots,x_{j,n_{0}})$ where
\[
x_{j,p}=
\begin{cases}
b_{l} & \text{if }p=i_{l}^{j} \text{ for some }1\leq l\leq h, \\
b_{1} & \text{otherwise.}
\end{cases}
\]
Let us set $\mbox{\boldmath{$x$}}_{i}=(x_{1,i},\ldots,x_{q,i}),1\leq
i\leq n_{0}$, and choose $(v_{1},\ldots,v_{n_{0}})\in
E_{k}^{n_{0}}\setminus\alpha_{n_{0}}$ and $c\in C_{\rho}$. Note that
$\mbox{\boldmath{$x$}}_{i}\neq\mbox{\boldmath{$x$}}_{l}$ for $1\leq
i< l\leq n_{0}$ by the construction of
$\mbox{\boldmath{$y$}}_{1},\ldots,\mbox{\boldmath{$y$}}_{q}$, so we
can define the $q$-ary operation $f$ on $E_{k}$ by
\[
f(\mbox{\boldmath{$x$}})=
\begin{cases}
v_{i} & \text{if }\mbox{\boldmath{$x$}}=\mbox{\boldmath{$x$}}_{i}\text{ for some }1\leq i\leq n_{0}, \\
c & \text{otherwise.}
\end{cases}
\]
Because of the choice of $(b_{1},\ldots,b_{n})$, the construction of
$\mbox{\boldmath{$y$}}_{1},\ldots,\mbox{\boldmath{$y$}}_{q}$, and of
the assumptions of \textbf{Case} 1, we have
$\{\mbox{\boldmath{$y$}}_{1},\ldots,\mbox{\boldmath{$y$}}_{q}
\}\subseteq\alpha_{n_{0}}$; moreover
$f(\mbox{\boldmath{$y$}}_{1},\ldots,\mbox{\boldmath{$y$}}_{q})=(f(\mbox{\boldmath{$x$}}_{1}),\ldots,
f(\mbox{\boldmath{$x$}}_{n_{0}}))=(v_{1},\ldots,v_{n_{0}})\not\in\alpha_{n_{0}}$.
So $f\not\in \Pol\alpha_{n_{0}}$. Using the construction of
$\mbox{\boldmath{$y$}}_{j},1\leq j\leq q$, and the fact that $c\in
C_{\rho}$ and $(b_{1},\ldots,b_{n})\notin\rho$, we can show that
$f\in \Pol\rho$, so $\Pol\{\rho, \alpha_{n_{0}}\}\subsetneq
\Pol\rho$. Therefore $\Pol\{\rho, \sigma\}\subsetneq \Pol\{\rho,
\alpha_{n_{0}}\}\subsetneq \Pol\rho$; contradicting the maximality
of $\Pol\{\rho, \sigma\}$ in $\Pol\rho$.
\end{proof}
Secondly, we discuss \textbf{Case} 2: there exists
$j\in\{2,\ldots,h\}$ such that
$(\omega,a_{2},\ldots,a_{j-1},$\\
$u,a_{j+1},\ldots,a_{h})\notin\rho$. Without loss of generality, we
suppose that $(\omega,u,a_{3},\ldots,a_{h})\not\in\rho$. Let
\[F=\{\{b_{1},\ldots,b_{h-1}\}\subseteq
E_{k}:\Card(\{b_{1},\ldots,b_{h-1}\})=h-1\text{ and
}C_{\rho}\cap\{b_{1},\ldots,b_{h-1}\}=\emptyset\}\] and
$m=\Card(F)$. For $1\leq j\leq m$ set
\[\beta_{j(h-1)+1}=\{(b_{1},\ldots,b_{j(h-1)+1})\in
E_{k}^{j(h-1)+1}:\exists u\in\sigma: \{(u,b_{2},\ldots,b_{h}),\]
\[(u,b_{h+1},\ldots,b_{2h-1}),\ldots,
(u,b_{(j-1)(h-1)+2},\ldots,b_{j(h-1)+1})\}\subseteq\rho\}.\]

\begin{lemma}\label{lem 13}
Under the assumptions of Proposition \ref{prop 2}, $h> 2$ and
$C_{\rho}\cap\sigma=\emptyset$, \textbf{Case 2}, i.e., there exist
$\omega,u \in\sigma, a_{2},\ldots,a_{h}\in E_{k}$ such that
$(\omega,a_{2},\ldots,a_{h})\not\in\rho$,
$(u,a_{2},\ldots,a_{h})\in\rho$,
$(\omega,u,a_{3},\ldots,a_{h})\notin\rho$, is impossible.
\end{lemma}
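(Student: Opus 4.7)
The plan is to mimic Lemmas \ref{lem 11} and \ref{lem 12} by exhibiting an auxiliary relation $R\in[\{\rho,\sigma\}]$ for which
\[
\Pol\{\rho,\sigma\}\subsetneq\Pol\{\rho,R\}\subsetneq\Pol\rho,
\]
contradicting the submaximality of $\Pol\{\rho,\sigma\}$ in $\Pol\rho$. The natural candidate is $R:=\beta_{j_0(h-1)+1}$ for the least $j_0\geq 1$ with $\beta_{j_0(h-1)+1}\neq E_{k}^{j_0(h-1)+1}$. Such a $j_0$ exists because $C_\rho\cap\sigma=\emptyset$ forces every $u\in\sigma$ to be outside $\rho$ on some $(h-1)$-tuple taken from a class in $F$, so once $j$ exceeds $m=\Card(F)$ a pigeonhole argument on the groups of coordinates makes $\beta_{j(h-1)+1}$ proper in $E_k^{j(h-1)+1}$.

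First I would verify that $\beta:=\beta_{j_0(h-1)+1}$ is totally reflexive and totally symmetric in its last $j_0(h-1)$ coordinates. Symmetry follows from the total symmetry of $\rho$ (within each block) and from the interchangeability of the blocks (they appear identically under the existential witness $u$). Total reflexivity comes from the minimality of $j_0$: if two of the last coordinates coincide, one can merge the two blocks that contain them and reduce to $\beta_{(j_0-1)(h-1)+1}=E_k^{(j_0-1)(h-1)+1}$ combined with the total reflexivity of $\rho$ on the merged block. Since $\beta$ is primitively positively defined from $\{\rho,\sigma\}$, I obtain $\Pol\{\rho,\sigma\}\subseteq\Pol\{\rho,\beta\}$; strictness is standard: for $a\in E_k\setminus\sigma$ the constant $c_a$ preserves $\beta$ (the tuple $(a,\ldots,a)$ has repetitions in each block, so any $u\in\sigma$ witnesses membership by total reflexivity of $\rho$) but not $\sigma$.

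The heart of the argument is the construction of $f\in\Pol\rho\setminus\Pol\beta$, modelled on Lemma \ref{lem 12}. Set $(b_1,\ldots,b_h):=(\omega,u,a_3,\ldots,a_h)\notin\rho$ and, for each ordered $h$-subset of coordinates of $E_k^{j_0(h-1)+1}$ that ``straddles'' the block structure of $\beta$, build a witness tuple $\mbox{\boldmath{$y$}}_j\in\beta$ by placing $b_l$ in the selected positions and $b_1$ elsewhere. Choose $c\in C_\rho$ and $(v_1,\ldots,v_{j_0(h-1)+1})\notin\beta$, let $\mbox{\boldmath{$x$}}_i$ denote the $i$-th column of $(\mbox{\boldmath{$y$}}_1,\ldots,\mbox{\boldmath{$y$}}_q)$, and define
\[
f(\mbox{\boldmath{$x$}})=\begin{cases} v_i & \text{if }\mbox{\boldmath{$x$}}=\mbox{\boldmath{$x$}}_i\text{ for some }1\leq i\leq j_0(h-1)+1,\\ c & \text{otherwise.}\end{cases}
\]
Then $f(\mbox{\boldmath{$y$}}_1,\ldots,\mbox{\boldmath{$y$}}_q)=(v_1,\ldots,v_{j_0(h-1)+1})\notin\beta$ by design, whereas preservation of $\rho$ follows because any $h$-tuple of columns either contains a repetition (handled by total reflexivity of $\rho$), or combines some $\mbox{\boldmath{$x$}}_i$ with the constant $c\in C_\rho$ (handled by centrality), or reproduces the forbidden pattern $(\omega,u,a_3,\ldots,a_h)\notin\rho$.

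The main obstacle will be the bookkeeping in the construction of the $\mbox{\boldmath{$y$}}_j$: the block partition of $\beta$ must be matched with the enumeration of $h$-subsets so that \emph{every} $h$-wise column configuration appearing while checking $f\in\Pol\rho$ is either trivially safe (via repetition or via the central element $c$) or indeed exhibits the pattern $(\omega,u,a_3,\ldots,a_h)$. Unlike Case 1, where a single forbidden tuple $(\omega,a_2,\ldots,a_h)$ sufficed and any permutation of the remaining data could be absorbed into $\rho$, Case 2 forces $\omega$ and $u$ to coexist in the same column, and one must check that splitting $(\omega,u,a_3,\ldots,a_h)$ across the blocks of $\beta$ never accidentally satisfies the definition of $\beta$---this is precisely where the hypothesis $(\omega,u,a_3,\ldots,a_h)\notin\rho$ plays its decisive role.
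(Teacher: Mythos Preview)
Your plan matches the paper's strategy exactly: set $R=\beta_{m_0(h-1)+1}$ for the least $m_0$ making it proper, separate $\Pol\{\rho,\sigma\}$ from $\Pol\{\rho,R\}$ via a constant, and separate $\Pol\{\rho,R\}$ from $\Pol\rho$ via a function $f$ built on column patterns extracted from placements of $(\omega,u,a_3,\ldots,a_h)$.

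There is one concrete divergence worth flagging. You copy Lemma~\ref{lem 12} verbatim, filling unselected positions with $b_1=\omega$. The paper instead fills with $u$ (so positions $i_2^j$ \emph{and} all unselected positions get $u$). This is not cosmetic: with the paper's choice, the single witness $u\in\sigma$ certifies $\mbox{\boldmath{$y$}}_j\in\beta$ for every $j$, because any block either contains a position carrying $u$ (giving a repetition in $(u,\text{block})$) or would have to contain $i_1^j$ together with all of $i_3^j,\ldots,i_h^j$ while omitting $i_2^j$, which is impossible since $i_1^j<i_2^j<i_3^j$ and blocks are intervals. With your filler $\omega$, a block can equal $\{i_2^j,\ldots,i_h^j\}$, whose entries are $u,a_3,\ldots,a_h$, and then the witness $\omega$ fails since $(\omega,u,a_3,\ldots,a_h)\notin\rho$; you must switch to witness $u$ for that particular $j$ and check the remaining blocks separately. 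This is recoverable (for each $j$ at most one ``bad'' block type occurs, and one of $\omega,u$ always works), but the paper's tweak avoids the case split.

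Your closing paragraph slightly misidentifies where the hypothesis $(\omega,u,a_3,\ldots,a_h)\notin\rho$ is used. It is not about preventing the $\mbox{\boldmath{$y$}}_j$ from ``accidentally satisfying'' $\beta$ (we want them in $\beta$); it is what guarantees $(\mbox{\boldmath{$x$}}_{i_1},\ldots,\mbox{\boldmath{$x$}}_{i_h})\notin\rho$ for every $h$-subset of columns, which is exactly the fact making $f\in\Pol\rho$. The discussion of total symmetry and total reflexivity of $\beta$ is unnecessary for the argument and does not actually hold in all coordinates (the first coordinate of $\beta$ is a dummy).
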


\begin{proof}
 Since $C_{\rho}\cap\sigma=\emptyset$, we have $\beta_{m(h-1)+1}\neq E_{k}^{m(h-1)+1}$. Let $m_{0}\geq 1$ be the least integer such that
$\beta_{m_{0}(h-1)+1}\neq E_{k}^{m_{0}(h-1)+1}$. We will show that
$\Pol\{\rho, \sigma\}\subsetneq \Pol\{\rho,
\beta_{m_{0}(h-1)+1}\}\subsetneq \Pol\rho$.

Since $\beta_{m_{0}(h-1)+1}\in[\{\rho, \sigma\}]$, we have
$\Pol\{\rho, \sigma\}\subsetneq \Pol\{\rho,
\beta_{m_{0}(h-1)+1}\}\subseteq \Pol\rho$.
 Since $\beta_{m_{0}(h-1)+1}\neq E_{k}^{m_{0}(h-1)+1}$, there exists $(v_{1},\ldots,v_{m_{0}(h-1)+1})\in E_{k}^{m_{0}(h-1)+1}\setminus\beta_{m_{0}(h-1)+1}$.
 Let $W'=\{(i_{1},\ldots,i_{h}): 1\leq i_{1}<i_{2}<\ldots<i_{h}\leq
m_{0}(h-1)+1\}$. For the reason of simpler notations we set\\
$W'=\{(i_{1}^{j},
  \ldots,i_{h}^{j}):1\leq j\leq q\}$. Set $\mbox{\boldmath{$y$}}_{j}=(x_{j,1},\ldots,x_{j,m_{0}(h-1)+1})$ with
 \[
x_{j,p}=
\begin{cases}
\omega & \text{if }p=i_{1}^{j}, \\
u & \text{if }p=i_{2}^{j}, \\
a_{l} & \text{if }p=i_{l}^{j}\text{ for some }3\leq l\leq h, \\
u & \text{otherwise.}
\end{cases}
\]
Set $\mbox{\boldmath{$x$}}_{i}=(x_{1,i},\ldots,x_{q,i}),1\leq i\leq m_{0}(h-1)+1$. We have $(\mbox{\boldmath{$x$}}_{i_{1}},\ldots,\mbox{\boldmath{$x$}}_{i_{h}})\not\in\rho$ for all\\
$(1\leq i_{1}<i_{2}<\ldots<i_{h}\leq m_{0}(h-1)+1$ ($\ast$)  (due to
$(\omega,u,a_{3},\ldots,a_{h})\not\in\rho$). Choose $c\in C_{\rho}$
and consider the $q$-ary operation $f$ defined on $E_{k}$ by
\[
f(\mbox{\boldmath{$x$}})=
\begin{cases}
v_{i} & \text{if }\mbox{\boldmath{$x$}}=\mbox{\boldmath{$x$}}_{i}\text{ for some }1\leq i\leq m_{0}(h-1)+1, \\
c & \text{otherwise.}
\end{cases}
\]
This operation is well defined, similarly as in Lemma~\ref{lem 12},
and we have $f\in \Pol\rho$ (due to $c\in C_{\rho}$ and ($\ast$)).
By the construction
$\{\mbox{\boldmath{$y$}}_{1},\ldots,\mbox{\boldmath{$y$}}_{q}\}\subseteq\beta_{m_{0}(h-1)+1}$,
furthermore
$(f(\mbox{\boldmath{$x$}}_{1}),\ldots,f(\mbox{\boldmath{$x$}}_{m_{0}(h-1)+1}))=(v_{1},\ldots,v_{m_{0}(h-1)+1})\not\in
\beta_{m_{0}(h-1)+1}$, so $f\not\in \Pol\beta_{m_{0}(h-1)+1}$.
Therefore $\Pol\{\rho, \sigma\}\subsetneq \Pol\{\rho,
\beta_{m_{0}(h-1)+1}\}\subsetneq \Pol\rho$; contradicting the
maximality of $\Pol\{\rho, \sigma\}$ in $\Pol\rho$.
\end{proof}
\begin{proof}[Proof (of Proposition \ref{prop 2})]
It follows from Lemmas \ref{lem 2}--\ref{lem 13}.
\end{proof}

We continue with the case $s=h$.

\begin{proposition}\label{prop 3}
Let $k\geq 3$, $\rho$ and $\sigma$ two h-ary central relations  on
$E_{k}$ ($h\geq 2)$. If $\Pol\{\rho, \sigma\}$ is maximal in
$\Pol\rho$, then $\sigma$ is of type III.
\end{proposition}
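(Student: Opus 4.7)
The plan is to prove the contrapositive: assuming that $\rho$ and $\sigma$ are two incomparable $h$-ary central relations, I will exhibit a clone strictly between $\Pol\{\rho,\sigma\}$ and $\Pol\rho$, contradicting maximality. The natural candidate for the intermediate relation is $\delta := \rho \cap \sigma$, which, as an intersection of two totally reflexive and totally symmetric relations, inherits these properties, contains $\iota_k^h$, belongs to $[\{\rho,\sigma\}]$ (so $\Pol\{\rho,\sigma\} \subseteq \Pol\{\rho,\delta\}$), and satisfies $\delta \subsetneq \rho$ because incomparability gives $\rho \not\subseteq \sigma$. The task therefore reduces to proving that both inclusions in $\Pol\{\rho,\sigma\} \subseteq \Pol\{\rho,\delta\} \subseteq \Pol\rho$ are strict.

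I would first handle the principal case $C_\rho \cap C_\sigma \neq \emptyset$. Here $\delta$ is itself an $h$-ary central relation (with centre $C_\rho \cap C_\sigma$) strictly contained in $\rho$, hence of type III with respect to $\rho$, so Proposition~\ref{prop 1} delivers $\Pol\{\rho,\delta\} \subsetneq \Pol\rho$ at once. For the other strict inclusion, pick $\bar b = (b_1,\ldots,b_h) \in \sigma \setminus \rho$ --- which exists by incomparability and has pairwise distinct entries since $\iota_k^h \subseteq \rho$ --- an arbitrary $\bar c = (c_1,\ldots,c_h) \in E_k^h \setminus \sigma$ (also with pairwise distinct entries, since $\iota_k^h \subseteq \sigma$), and a central element $e \in C_\rho \cap C_\sigma$. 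Define a unary operation $f$ on $E_k$ by $f(b_i) = c_i$ for $1 \leq i \leq h$ and $f(x) = e$ otherwise. A short case analysis shows $f \in \Pol\rho \cap \Pol\delta$: either some coordinate of the input lies outside $\{b_1,\ldots,b_h\}$, in which case the image contains $e$ and belongs to $\rho \cap \sigma$; or all coordinates lie inside $\{b_1,\ldots,b_h\}$ and either there is a repetition (image in $\iota_k^h$) or the input is a permutation of $\bar b$, which is ruled out by total symmetry of $\rho$ combined with $\bar b \notin \rho$. Since $f(\bar b) = \bar c \notin \sigma$, $f$ does not preserve $\sigma$, producing the required operation in $\Pol\{\rho,\delta\} \setminus \Pol\sigma$.

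The remaining case $C_\rho \cap C_\sigma = \emptyset$ is where I expect the main obstacle to lie: $\delta$ then has empty centre and is not a central relation, so Proposition~\ref{prop 1} no longer directly yields $\Pol\{\rho,\delta\} \subsetneq \Pol\rho$, and the constant $e$ above loses one of its verifications. Here both strict inclusions must be obtained by explicit constructions. For $\Pol\{\rho,\delta\} \subsetneq \Pol\rho$, use a tuple $\bar a \in \rho \setminus \sigma$ (with pairwise distinct entries) and some $\bar b \in \delta$ with pairwise distinct entries, together with an element of $C_\rho$, to build a unary operation in $\Pol\rho$ that sends $\bar b$ to $\bar a \notin \delta$; the degenerate subcase $\delta = \iota_k^h$ has to be treated separately by relating $\Pol\iota_k^h$ to a Slupecki-type clone known to differ from $\Pol\rho$. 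For $\Pol\{\rho,\sigma\} \subsetneq \Pol\{\rho,\delta\}$, symmetrically build a unary operation in $\Pol\rho \cap \Pol\delta$ moving a tuple of $\sigma$ outside $\sigma$, exploiting the asymmetry between $C_\rho$ and $C_\sigma$. If this direct route breaks down, one can replace $\delta$ by an auxiliary central relation in $[\{\rho,\sigma\}]$ obtained through existential projections, in the spirit of the relations $\alpha_n$, $\beta_{j(h-1)+1}$, $\gamma_n$ built in the proof of Proposition~\ref{prop 2}, which supplies the non-empty proper centre that $\delta$ itself lacks and restores the applicability of Proposition~\ref{prop 1}(III).
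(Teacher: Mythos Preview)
Your treatment of the case $C_\rho \cap C_\sigma \neq \emptyset$ is correct and tidier than the paper's: with $\delta=\rho\cap\sigma$ a central relation of type~III, Proposition~\ref{prop 1} gives $\Pol\{\rho,\delta\}\subsetneq\Pol\rho$, and your unary map with default value $e\in C_\rho\cap C_\sigma$ separates $\Pol\{\rho,\delta\}$ from $\Pol\sigma$. Likewise, in the hard case your sketch for $\Pol\{\rho,\delta\}\subsetneq\Pol\rho$ goes through: $\delta$ always contains a tuple with pairwise distinct entries (e.g.\ $(c,\omega,x_3,\ldots,x_h)$ with $c\in C_\rho$, $\omega\in C_\sigma$), so the degenerate subcase $\delta=\iota_k^h$ does not arise, and the map $b_i\mapsto a_i$, else $\mapsto c\in C_\rho$, lies in $\Pol\rho\setminus\Pol\delta$.

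The genuine gap is the inclusion $\Pol\{\rho,\sigma\}\subsetneq\Pol\{\rho,\delta\}$ when $C_\rho\cap C_\sigma=\emptyset$. Your ``symmetric'' unary construction cannot work: a unary $f$ sending some $\bar b\in\sigma\setminus\rho$ to $\bar c\notin\sigma$ needs a default value $d$, and for $f\in\Pol\delta$ one needs, on inputs from $\delta$ with a coordinate outside $\{b_1,\ldots,b_h\}$, that the output lands in $\delta=\rho\cap\sigma$; this forces $d\in C_\rho\cap C_\sigma=\emptyset$. Taking $d\in C_\rho$ breaks $\Pol\delta$, taking $d\in C_\sigma$ breaks $\Pol\rho$. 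Your fallback (``replace $\delta$ by an auxiliary relation'') names no relation and gives no argument.

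The paper's resolution uses a different split, via $\gamma_1=\{\bar x:\exists u\,\forall i\,(x_1,\ldots,x_{i-1},u,x_{i+1},\ldots,x_h)\in\delta\}$. If $\delta\subsetneq\gamma_1\cap\rho$ (Lemma~\ref{lem 16}), one picks $(x_1,\ldots,x_h)\in(\gamma_1\cap\rho)\setminus\delta$ together with its witness $u$; the map $u_i\mapsto x_i$, else $\mapsto u$, lies in $\Pol\{\rho,\delta\}\setminus\Pol\sigma$ because $u$ supplies exactly the ``common-centre'' behaviour your construction lacks. If $\delta=\gamma_1\cap\rho$ (Lemma~\ref{lem 15}), no such witness exists and $\delta$ is abandoned as intermediate; the paper instead builds a family $\beta_t\in[\{\rho,\sigma\}]$ (for $t\geq h$) by further existential closure, shows $\Pol\{\rho,\sigma\}\subsetneq\Pol\{\rho,\beta_t\}\subsetneq\Pol\rho$ for the least $t$ with $\beta_t\neq E_k^t$, and derives a contradiction when all $\beta_t$ are full. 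This last subcase carries most of the work, and your proposal contains nothing that addresses it.
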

The proof of Proposition \ref{prop 3} is contained in Lemmas
\ref{lem 14}--\ref{lem 16}. We set $\gamma=\rho\cap\sigma$,
$\gamma_{1}=\{(x_{1},\ldots,x_{h})\in E_{k}^{h}: \exists u\in E_{k},
\forall 1\leq i\leq h,
(x_{1},\ldots,x_{i-1},u,x_{i+1},\ldots,x_{h})\in\gamma\}$ and
$\beta=\rho\cap\gamma_{1}$. By the definition of $\gamma_{1}$, we
have $\gamma\subseteq\gamma_{1}$, $\gamma_{1}$ reflexive or totally
reflexive, and $\gamma_{1}$ symmetric or totally symmetric. Since
$\gamma=\rho\cap\sigma$, we have two cases: (3.1)
$\gamma\in\{\rho,\sigma\}$ and (3.2) $\gamma\varsubsetneq\rho$ and
$\gamma\varsubsetneq\sigma$.

\begin{lemma}\label{lem 14}
Under the assumptions of Proposition \ref{prop 3} and
$\gamma\in\{\rho,\sigma\}$, we obtain a relation of type III.
\end{lemma}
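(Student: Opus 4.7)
The plan is to unwind the definition $\gamma=\rho\cap\sigma$ directly in the two subcases of the hypothesis and check that each one forces comparability, which is exactly condition (III) of Theorem \ref{theo 1}.

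First I would treat the subcase $\gamma=\rho$. Here $\rho=\rho\cap\sigma\subseteq\sigma$, and since the running hypotheses (both from the statement of Theorem \ref{theo 1} and from the fact that $\rho$ and $\sigma$ are \emph{two} central relations in Proposition \ref{prop 3}) include $\rho\neq\sigma$, this inclusion is strict: $\rho\subsetneq\sigma$. Symmetrically, in the subcase $\gamma=\sigma$ we get $\sigma=\rho\cap\sigma\subseteq\rho$, hence $\sigma\subsetneq\rho$ by the same non-equality. In either situation $\rho$ and $\sigma$ are comparable, and because both relations are $h$-ary we have $s=h$; this is precisely condition (III).

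There is no real obstacle: the lemma is a purely set-theoretic observation once we recall that $\rho\neq\sigma$ is part of the standing data. In particular no use of the maximality assumption on $\Pol\{\rho,\sigma\}$ inside $\Pol\rho$ is needed for this lemma; maximality will come into play only in the more substantive Lemmas \ref{lem 15}--\ref{lem 16} that handle the remaining subcase $\gamma\subsetneq\rho$ and $\gamma\subsetneq\sigma$, where the auxiliary relations $\gamma_{1}$ and $\beta=\rho\cap\gamma_{1}$ introduced just before the lemma have to be exploited to derive a contradiction with submaximality.
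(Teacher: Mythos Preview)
Your proposal is correct and follows essentially the same argument as the paper: from $\rho\cap\sigma\in\{\rho,\sigma\}$ deduce $\rho\subseteq\sigma$ or $\sigma\subseteq\rho$, use $\rho\neq\sigma$ to make the inclusion strict, and conclude that $\sigma$ is of type~III. Your remark that the maximality hypothesis is not used here is accurate and harmless.
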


\begin{proof}
 Since $\rho\cap\sigma\in\{\rho,\sigma\}$, we have $\rho\cap\sigma=\rho$ or $\rho\cap\sigma=\sigma$; hence, $\rho\subsetneq\sigma$ or $\sigma\subsetneq\rho$.
 Therefore, $\sigma$ is of type III.
\end{proof}

We look at the case $\gamma\varsubsetneq\rho$ and
$\gamma\varsubsetneq \sigma$, i.e $\rho\nsubseteq\sigma$ and
$\sigma\nsubseteq\rho$. We have the following subcases: (i)
$\gamma_{1}\cap\rho=\gamma$ and (ii)
$\gamma\varsubsetneq\gamma_{1}\cap\rho$.

We begin with $\gamma_{1}\cap\rho=\gamma$. Hence, $\gamma$ is not a
central relation because $\gamma_{1}\neq E_{k}^{h}$. Choose $c\in
C_{\rho}$, $\omega\in C_{\sigma}$, and
$a_{2},\ldots,a_{h},b_{2},\ldots,b_{h}$ such that
$(c_{1},\ldots,c_{h}):=(c,a_{2},\ldots,a_{h})\notin\sigma$ and
$(\omega_{1},\ldots,\omega_{h}):=(\omega,b_{2},\ldots,b_{h})\notin\rho$.

\begin{lemma}\label{lem 15}
Under the assumptions of Proposition \ref{prop 3} and
$\rho\cap\sigma\not\in\{\rho,\sigma\}$, the subcase
 $\gamma=\gamma_{1}\cap\rho$ is impossible.
\end{lemma}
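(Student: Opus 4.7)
The plan is to derive a contradiction with the maximality of $\Pol\{\rho,\sigma\}$ in $\Pol\rho$ by inserting a clone strictly between them. The natural candidate for the intermediate relation is $\gamma_1$ itself. Because $\gamma = \rho \cap \sigma$ and $\gamma_1$ is defined from $\gamma$ by a primitive positive formula, $\gamma_1 \in [\{\rho,\sigma\}]$, which immediately gives $\Pol\{\rho,\sigma\} \subseteq \Pol\{\rho,\gamma_1\} \subseteq \Pol\rho$. Note that $\gamma_1$ is totally reflexive and totally symmetric, and the subcase hypothesis $\gamma_1\cap\rho=\gamma\subsetneq\rho$ forces $\gamma_1 \neq E_k^h$ (otherwise $\gamma_1\cap\rho=\rho$). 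So $\gamma_1$ is a well-behaved auxiliary relation, and the task reduces to showing both inclusions are strict.

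For the upper strict inclusion $\Pol\{\rho,\gamma_1\} \subsetneq \Pol\rho$, I would exploit the witness $(c_1,\ldots,c_h)=(c,a_2,\ldots,a_h)$. Since $c \in C_\rho$ we have $(c,a_2,\ldots,a_h)\in\rho$, but because $(c,a_2,\ldots,a_h)\notin\sigma$, it cannot lie in $\gamma=\gamma_1\cap\rho$, hence it is not in $\gamma_1$. Following the pattern of Lemmas~\ref{lem 11}–\ref{lem 13}, I would list the $h$-subsets of indices of some $n$-tuple and build auxiliary column-tuples $\mathbf{y}_1,\ldots,\mathbf{y}_q$ whose entries realize $(c_1,\ldots,c_h)$ at the chosen positions and are padded by $c$ elsewhere, so that $\{\mathbf{y}_1,\ldots,\mathbf{y}_q\}\subseteq\gamma_1$ but the associated row-tuples $\mathbf{x}_1,\ldots,\mathbf{x}_{h}$ are pairwise distinct. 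Then I would define an $n$-ary $f$ sending $\mathbf{x}_i \mapsto c_i$ and everything else to a fixed element of $C_\rho$. Total reflexivity of $\rho$ together with $c\in C_\rho$ ensures $f \in \Pol\rho$, but $f$ maps the $\gamma_1$-tuples $(\mathbf{y}_1,\ldots,\mathbf{y}_q)$ to $(c_1,\ldots,c_h)\notin\gamma_1$.

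For the lower strict inclusion $\Pol\{\rho,\sigma\} \subsetneq \Pol\{\rho,\gamma_1\}$, I would use the dual witness $(\omega_1,\ldots,\omega_h)=(\omega,b_2,\ldots,b_h)$: it is in $\sigma$ because $\omega\in C_\sigma$, but not in $\rho$. I would construct an operation $g$ that sends suitably chosen tuples (all simultaneously in $\rho$ and in $\gamma_1$) to this forbidden $\sigma$-tuple, while on all other inputs it is constant with value in $C_\rho \cap C_\sigma$ if such an element exists, or else is driven back to a central element of $\rho$ together with a case analysis. Preservation of $\rho$ will follow from total reflexivity plus the fact that $\omega$ never appears in more than one coordinate of an image tuple for truly $\rho$-correlated inputs; preservation of $\gamma_1$ will follow because the images that can arise from $\gamma_1$-related inputs have at least one coordinate equal to the central fall-back value and so lie in $\iota_k^h\subseteq\gamma_1$.

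The main obstacle is the preservation of $\gamma_1$ in these constructions, because $\gamma_1$ is defined existentially and supplying a witness $u \in E_k$ for a given image tuple is not automatic. The subcase hypothesis $\gamma_1\cap\rho=\gamma$ must be used carefully to control which image tuples can occur: it rules out phenomena that would otherwise force the construction into $\rho\setminus\gamma_1$. I expect that the verification will proceed by splitting into cases according to how many coordinates of the input rows equal the designated tuples $\mathbf{x}_i$, with the fall-back element of $C_\rho$ (or $C_\rho\cap C_\sigma$, when available) absorbing the remaining cases via total reflexivity.
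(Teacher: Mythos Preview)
Your plan to squeeze $\Pol\{\rho,\gamma_1\}$ strictly between $\Pol\{\rho,\sigma\}$ and $\Pol\rho$ has a genuine gap on the lower side, and the description of the upper side is also garbled.

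For the \emph{upper} inclusion, note that $\gamma_1$ is $h$-ary, the same arity as $\rho$. The machinery of Lemmas~\ref{lem 11}--\ref{lem 13} (listing all $h$-subsets of $\{1,\ldots,n_0\}$ and forming column-tuples $\mathbf{y}_j$) is designed for an auxiliary relation of arity $n_0>h$; when $n_0=h$ there is a single $h$-subset and your ``$\mathbf{y}_1$'' is $(c_1,\ldots,c_h)=(c,a_2,\ldots,a_h)$ itself, which lies \emph{outside} $\gamma_1$. A simple unary map sending some tuple in $\gamma_1\setminus\iota_k^h$ to $(c,a_2,\ldots,a_h)$ would suffice here, but that is not what you described.

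The real problem is the \emph{lower} inclusion $\Pol\{\rho,\sigma\}\subsetneq\Pol\{\rho,\gamma_1\}$. Your witness $(\omega,b_2,\ldots,b_h)$ is \emph{in} $\sigma$, so mapping anything to it cannot witness failure of $\sigma$-preservation. To violate $\Pol\sigma$ you must hit a non-$\sigma$ tuple such as $(c,a_2,\ldots,a_h)$; but that tuple is also outside $\gamma_1$, so any operation whose image contains the full set $\{c,a_2,\ldots,a_h\}$ risks destroying $\gamma_1$ as well. Whether a $\gamma_1$-tuple of the form $(x,a_2,\ldots,a_h)$ with $x\notin\{a_2,\ldots,a_h\}$ exists depends on $\rho$ and $\sigma$ in ways you do not control, and your fallback ``value in $C_\rho\cap C_\sigma$ if such an element exists'' begs exactly the question, since $C_\rho\cap C_\sigma$ may well be empty here.

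The paper avoids this entirely by working with a different auxiliary relation $\beta_t$ (for $t\geq h$) built so that $\rho\subseteq\beta_h$. This reverses the difficulty: when $\beta_h\subsetneq E_k^h$ it is a type~III central relation, so $\Pol\{\rho,\beta_h\}\subsetneq\Pol\rho$ comes for free from Proposition~\ref{prop 1}, and the strictness of $\Pol\{\rho,\sigma\}\subsetneq\Pol\{\rho,\beta_h\}$ is obtained with a single unary map of image $\{c,a_2,\ldots,a_h\}$, which now automatically preserves $\beta_h$ because $\rho\subseteq\beta_h$. The cases $\beta_h=E_k^h$ are then handled separately. Your choice of $\gamma_1$ (which satisfies $\gamma_1\cap\rho=\gamma\subsetneq\rho$ rather than $\rho\subseteq\gamma_1$) does not give you this leverage.
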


\begin{proof}
We consider the $t$-ary ($t\geq h$) relation $\beta_{t}$ defined on
$E_{k}$ by
\[\beta_{t}=\{(x_{1},\ldots,x_{t})\in E_{k}^{t}: \exists
u_{x_{1}}\in E_{k}:\,
\forall\{i_{1},\cdots,i_{h-1}\}\subseteq\{1,\ldots, t\},\]
\[(x_{i_{1}},\ldots,x_{i_{h-2}},x_{1},u_{x_{1}})\in~\gamma\text{ and
}(x_{i_{1}},\ldots,x_{i_{h-1}},u_{x_{1}})~\in\rho\}.\] Taking
$u_{\omega}=c$, we see that
$(\omega_{1},\ldots,\omega_{h})=(\omega,b_{2},\ldots,b_{h})\in\beta_{h}\setminus\rho$.
Hence, $\rho\varsubsetneq\beta_{h}$. Therefore we have two subcases:
$\beta_{h}\varsubsetneq E_{k}^{h}$ or $\beta_{h}=E_{k}^{h}$.

1) First we suppose that $\beta_{h}\varsubsetneq E_{k}^{h}$. We will
show that
$\Pol\{\rho,\sigma\}\varsubsetneq\Pol\{\rho,\beta_{h}\}\varsubsetneq\Pol\rho$.
Since $\beta_{h}\in[\{\rho,\sigma\}]$, we have
$\Pol\{\rho,\sigma\}\subseteq\Pol\{\rho,\beta_{h}\}\subseteq\Pol\rho$.
The unary operation $f$ defined on $E_{k}$ by
\[
f(x)=
\begin{cases}
c_{1} & \text{if }x=\omega_{1}=\omega,\\
c_{i} & \text{if } x=c_{i}\text{ for some }2\leq i\leq h,\\
c & \text{otherwise.}
\end{cases}
\]
belongs to $\Pol\{\rho,\beta_{h}\}$(due to $\rho\subseteq\beta_{h}$,
$\rho$ totally reflexive and
$\img(f)=\{c_{1}=c,c_{2},\ldots,c_{h}\}$), but not to $\Pol\sigma$
because $(\omega,c_{2},c_{3},\ldots,c_{h})\in\sigma$ and
$f(\omega,c_{2},c_{3},\ldots,c_{h})=(c_{1},\ldots,c_{h})\notin\sigma$;
hence, $\Pol\{\rho,\sigma\}\varsubsetneq\Pol\{\rho,\beta_{h}\}$. It
remains to show that $\Pol\{\rho,\beta_{h}\}\subsetneq\Pol\rho$.
Moreover, $\rho$ and $\beta_{h}$ are two different central relations
(due to $\rho\varsubsetneq\beta_{h}$); therefore, $\Pol\rho$ and
$\Pol\beta_{h}$ are two different maximal clones and
$\Pol\{\rho,\beta_{h}\}\varsubsetneq\Pol\rho$. Hence,
$\Pol\{\rho,\sigma\}$ is not a submaximal clone of $\Pol\rho$,
contradicting the assumption of the lemma.

2) Now we suppose that $\beta_{h}=E_{k}^{h}$. It yields two
possibilities: $\beta_{k}\neq E_{k}^{k}$ or $\beta_{k}=E_{k}^{k}$.

a) First we suppose that $\beta_{k}\neq E_{k}^{k}$. Let $m_{0}>h$ be
the least integer such that $\beta_{m_{0}}\neq E_{k}^{m_{0}}$. We
will show that $\Pol\{\rho, \sigma\}\subsetneq \Pol\{\rho,
\beta_{m_{0}}\}\subsetneq \Pol\rho$. Since $\beta_{m_{0}}\in[\{\rho,
\sigma\}]$, we have $\Pol\{\rho, \sigma\}\subseteq \Pol\{\rho,
\beta_{m_{0}}\}\subseteq \Pol\rho$. The unary operation $f$ defined
above preserves $\rho$ and $\beta_{m_{0}}$(due to $m_{0}>h$ and
$\beta_{m_{0}}$ totally reflexive), and it does not preserve
$\sigma$. Thus
$\Pol\{\rho,\sigma\}\varsubsetneq\Pol\{\rho,\beta_{m_{0}}\}$. Since
$\beta_{m_{0}}\neq E_{k}^{m_{0}}$, there exists
$(v_{1},\ldots,v_{m_{0}})\in E_{k}^{m_{0}}\setminus\beta_{m_{0}}$.
Let $W'=\{(i_{1},\ldots,i_{h}): 1\leq i_{1}<i_{2}<\ldots<i_{h}\leq
m_{0}\}$. For the reason of simpler notations we set
$W'=\{(i_{1}^{j},
  \ldots,i_{h}^{j}):1\leq j\leq q\}$. Set $\mbox{\boldmath{$y$}}_{j}=(x_{j,1},\ldots,x_{j,m_{0}})$ with
 \[
x_{j,p}=
\begin{cases}
\omega_{l} & \text{if }p=i_{l}^{j}\text{ for some }1\leq l\leq h, \\
\omega_{1} & \text{otherwise.}
\end{cases}
\]
Set $\mbox{\boldmath{$x$}}_{i}=(x_{1,i},\ldots,x_{q,i}),1\leq i\leq
m_{0}$. We have $(\mbox{\boldmath{$x$}}_{i_{1}},
\ldots,\mbox{\boldmath{$x$}}_{i_{h}})\not\in\rho$ for all $(1\leq
i_{1}<i_{2}<\ldots<i_{h}\leq m_{0}$ ($\ast$)  (due to
$(\omega_{1},\ldots,\omega_{h})\not\in\rho$). Consider the $q$-ary
operation $f$ defined on $E_{k}$ by
\[
f(\mbox{\boldmath{$x$}})=
\begin{cases}
v_{i} & \text{if }\mbox{\boldmath{$x$}}=\mbox{\boldmath{$x$}}_{i}\text{ for some }1\leq i\leq m_{0}, \\
c & \text{otherwise.}
\end{cases}
\]
This operation is well defined, similarly as in Lemma ~\ref{lem 12},
and we have $f\in \Pol\rho$ (due to $c\in C_{\rho}$ and ($\ast$)).
Furthermore,
$\{\mbox{\boldmath{$y$}}_{1},\ldots,\mbox{\boldmath{$y$}}_{q}\}\subseteq\beta_{m_{0}}$,
but
$(f(\mbox{\boldmath{$x$}}_{1}),\ldots,f(\mbox{\boldmath{$x$}}_{m_{0}}))=(v_{1},\ldots,v_{m_{0}})\not\in
\beta_{m_{0}}$. So, $f\not\in \Pol\beta_{m_{0}}$. Therefore,
$\Pol\{\rho, \sigma\}\subsetneq \Pol\{\rho,
\beta_{m_{0}}\}\subsetneq \Pol\rho$, contradicting the maximality of
$\Pol\{\rho, \sigma\}$ in $\Pol\rho$.

b) Now we suppose that $\beta_{k}=E_{k}^{k}$. Let $j\in E_{k}$;
since $(j,j+1,\ldots,k-1,0,1,\ldots,j-1)\in E_{k}^{k}=\beta_{k}$,
there exists $u_{j}\in E_{k}$ such that for all
$\{i_{1},\ldots,i_{h-1}\}\subseteq E_{k}$,
$(i_{1},\ldots,i_{h-2},j,u_{j})\in\gamma$ and
$(i_{1},\ldots,i_{h-1},u_{j})\in\rho$. Hence $u_{j}\in C_{\rho}$.
Recall that $(c_{1},\ldots,c_{h})=(c,a_{2},\ldots,
a_{h})\in\rho\setminus\sigma$. Let us show the contradiction
$(c_{1},\ldots,c_{h})=(c,a_{2},\ldots, a_{h})\in\gamma$. Let
$u_{a_{h}}$ be a central element of $\rho$ related to $a_{h}$ as
above. For $i\in\{1,\ldots,h-1\}$, we have
$(c_{1},\ldots,c_{i-1},u_{a_{h}},c_{i+1},\ldots,c_{h}=a_{h})\in~\gamma$
because
$(c_{1},c_{2},\ldots,c_{i-1},c_{i+1},\ldots,c_{h}=a_{h},u_{a_{h}})\in\gamma$
and $\gamma$ is totally symmetric. It remains to show that
$(c,c_{2},c_{3},\ldots,c_{h-1},u_{a_{h}})\in\gamma$ to conclude that
$(c_{1},\ldots,c_{h})\in\gamma_{1}$. Since $c, u_{a_{h}}\in
C_{\rho}$, we have $(\omega,c_{2},c_{3},\ldots,c_{h-1},u_{a_{h}})$,
$(c,\omega,c_{3},\ldots,c_{h-1},u_{a_{h}})$,$\ldots$,
$(c,c_{2},c_{3},\ldots,c_{h-1},\omega)\in\gamma$; thus,
$(c,c_{2},c_{3},\ldots,c_{h-1},u_{a_{h}})\in\gamma_{1}$.
Consequently,
$(c,c_{2},$\\
$\ldots,c_{h-1},u_{a_{h}})\in(\gamma_{1}\cap\rho)=~\gamma$. From
$(c_{1},\ldots,c_{i-1},u_{a_{h}},c_{i+1}\ldots,c_{h})\in\gamma$ for
$1\leq i\leq h$, we conclude that $(c_{1},c_{2},c_{3},\ldots,c_{h})
\in(\gamma_{1}\cap\rho)=\gamma$, contradicting the choice of the
tuple $(c_{1},c_{2},c_{3},\ldots,c_{h})$.
\end{proof}
Hence, we are left with subcase (ii)

\begin{lemma}\label{lem 16}
Under the assumptions of Proposition \ref{prop 3} and
$\rho\cap\sigma\not\in\{\rho,\sigma\}$, the subcase
$\gamma\subsetneq(\gamma_{1}\cap\rho)$
 is impossible.
\end{lemma}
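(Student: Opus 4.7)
Plan:

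The plan is to derive a contradiction with the maximality of $\Pol\{\rho,\sigma\}$ in $\Pol\rho$ by showing that the relation $\beta=\rho\cap\gamma_1$ yields a proper intermediate clone, namely
\[
\Pol\{\rho,\sigma\}\subsetneq\Pol\{\rho,\beta\}\subsetneq\Pol\rho.
\]
The two-sided inclusion $\Pol\{\rho,\sigma\}\subseteq\Pol\{\rho,\beta\}\subseteq\Pol\rho$ is routine: $\gamma=\rho\cap\sigma$ lies in $[\{\rho,\sigma\}]$, and $\gamma_1$ is built from $\gamma$ by a primitive-positive (existential) formula, so $\gamma_1\in[\{\rho,\sigma\}]$ and hence so is $\beta$; Remark~2.3 then gives the chain of inclusions.

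For the first strict inclusion, I would use the hypothesis $\gamma\subsetneq\beta$ to choose $(z_1,\ldots,z_h)\in\beta\setminus\gamma$; since $\beta\subseteq\rho$ and $\gamma=\rho\cap\sigma$, this tuple lies in $\rho\setminus\sigma$. Dually, $\gamma\subsetneq\sigma$ provides a tuple $(\omega_1,\ldots,\omega_h)\in\sigma\setminus\rho$, which I may assume has pairwise distinct entries by taking $\omega_1\in C_\sigma$ and choosing $\omega_2,\ldots,\omega_h$ appropriately (using total reflexivity/symmetry of $\sigma$). A unary operation $f:E_k\to E_k$ sending $\omega_i\mapsto z_i$ for $1\leq i\leq h$ and every other element to a suitable central element $c$ then lies in $\Pol\{\rho,\beta\}\setminus\Pol\sigma$: it breaks $\sigma$ on $(\omega_1,\ldots,\omega_h)$, while the total reflexivity of $\rho$ and $\beta$ together with the centrality of $c$ (which lies in $C_\rho$, and, as can be checked by unfolding the definition of $\gamma_1$, also in $C_\beta$) forces $f$ to preserve both $\rho$ and $\beta$.

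For the second strict inclusion $\Pol\{\rho,\beta\}\subsetneq\Pol\rho$, one distinguishes the cases $\beta\subsetneq\rho$ and $\beta=\rho$. When $\beta\subsetneq\rho$, the dual of the previous construction, applied to a tuple $(y_1,\ldots,y_h)\in\rho\setminus\beta$, produces $f\in\Pol\rho\setminus\Pol\beta$. In the harder case $\beta=\rho$, i.e.\ $\rho\subseteq\gamma_1$, the unary template no longer suffices and one should pass, as in the second half of the proof of Lemma~\ref{lem 15}, to the higher-arity analogues
\[
\beta_t=\{(x_1,\ldots,x_t)\in E_k^t:\exists u\in E_k,\ \forall\{i_1,\ldots,i_{h-1}\}\subseteq\{1,\ldots,t\},\ (x_{i_1},\ldots,x_{i_{h-1}},u)\in\gamma\},
\]
pick the least $t_0$ with $\beta_{t_0}\subsetneq E_k^{t_0}$, and replicate the $q$-ary construction of Lemma~\ref{lem 15} to separate $\Pol\{\rho,\beta_{t_0}\}$ from $\Pol\rho$.

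The main obstacle is precisely the degenerate subcase $\beta=\rho$: here $\rho\subseteq\gamma_1$ is a strong structural constraint compatible a priori with $\rho$ and $\sigma$ being incomparable, so no direct contradiction is visible and one must recycle the high-arity machinery of Lemma~\ref{lem 15}, checking that some finite $t_0$ with $\beta_{t_0}\neq E_k^{t_0}$ exists (which fails only in situations already excluded by Lemma~\ref{lem 14}) and that the ensuing $q$-ary operation simultaneously preserves $\rho$ and violates $\beta_{t_0}$. A secondary technical point is verifying that a central element $c$ with $c\in C_\beta$ is available for the Step~2 construction; this follows by unfolding the definitions and using the totally reflexive/symmetric structure of $\gamma$, keeping in mind that the incomparability hypothesis $\rho\cap\sigma\notin\{\rho,\sigma\}$ is what makes the relevant intersection rich enough.
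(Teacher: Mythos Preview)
Your choice of intermediate relation is what makes the argument hard. The paper does not use $\beta=\rho\cap\gamma_1$; it uses $\gamma=\rho\cap\sigma$ itself and shows
\[
\Pol\{\rho,\sigma\}\subsetneq\Pol\{\rho,\gamma\}\subsetneq\Pol\rho
\]
with two unary operations. The key idea you are missing is to exploit the \emph{witness} $u$ attached to the chosen tuple $(x_1,\ldots,x_h)\in(\gamma_1\cap\rho)\setminus\gamma$: by definition of $\gamma_1$, every tuple obtained from $(x_1,\ldots,x_h)$ by replacing one coordinate by $u$ lies in $\gamma$. Using $u$ (not a central element $c$) as the default value of the unary map $f\colon u_i\mapsto x_i$, $x\mapsto u$ otherwise, forces every non-reflexive image tuple to be a permutation of some $(x_1,\ldots,x_{l-1},u,x_{l+1},\ldots,x_h)\in\gamma$, so $f\in\Pol\{\rho,\gamma\}$; and $f(u_1,\ldots,u_h)=(x_1,\ldots,x_h)\notin\sigma$ for any $(u_1,\ldots,u_h)\in\sigma\setminus\gamma=\sigma\setminus\rho$. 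For the other strict inclusion, the map $g$ fixing $x_2,\ldots,x_h$ and sending everything else to $x_1$ has image $\{x_1,\ldots,x_h\}$, hence preserves $\rho$, but sends $(u,x_2,\ldots,x_h)\in\gamma$ to $(x_1,\ldots,x_h)\notin\gamma$. No case split and no high-arity construction is needed.

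Your route has two genuine gaps. First, the claim that a central element $c\in C_\rho$ lies in $C_\beta$ is not justified: membership in $C_{\gamma_1}$ would require, for arbitrary $a_2,\ldots,a_h$, an element $u$ with $(u,a_2,\ldots,a_h)\in\gamma\subseteq\sigma$, and there is no reason $c\in C_\rho$ supplies such a $u$ when $C_\rho\cap C_\sigma$ may be empty. Second, your subcase $\beta=\rho$ is not a degenerate technicality but a real obstruction: in that case $\Pol\{\rho,\beta\}=\Pol\rho$ and the intermediate clone collapses, so you are forced to abandon $\beta$ and start over with a different family $\beta_t$; the details you defer (existence of a finite $t_0$, the precise definition of $\beta_t$, why the $q$-ary construction preserves $\rho$) are exactly where the work would lie, and your $\beta_t$ does not match the one in Lemma~\ref{lem 15}. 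Switching to $\gamma$ and using the witness $u$ removes both difficulties at once.
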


\begin{proof}
Since $\gamma\varsubsetneq\beta=(\gamma_{1}\cap\rho)$, choose
$(x_{1},\ldots,x_{h})\in\beta=(\gamma_{1}\cap\rho)\setminus\gamma$;
there exists $u\in E_{k}$ such that for all $1\leq i\leq h$,
$(x_{1},\ldots,x_{i-1},u,x_{i+1},x_{h})\in\gamma$. Let
$(u_{1},\ldots,u_{h})\in\sigma\setminus\gamma$(due to
$\gamma\varsubsetneq\sigma$). We will show that
$\Pol\{\rho,\sigma\}\subsetneq \Pol\{\rho, \gamma\}\subsetneq
\Pol\rho$. The unary operation $f$ defined by
\[
f(x)=
\begin{cases}
x_{i} & \text{if }x=u_{i}\text{ for some }1\leq i\leq h,\\
u & \text{otherwise;}
\end{cases}
\]
belongs to $\Pol\{\rho,\gamma\}\setminus\Pol\sigma$ because
$(x_{1},\ldots,x_{h})\in\rho\setminus\gamma$,
$\{x_{1},\ldots,x_{h}\}^{h-1}\times\{u\}\subseteq\gamma$,
$\img(f)=\{x_{1},\ldots,x_{h},u\}$, and
$(u_{1},\ldots,u_{h})\in\sigma\setminus\gamma$, but
$f(u_{1},\ldots,u_{h})=(x_{1},\ldots,x_{h})\notin\sigma$ (due to
$(x_{1},\ldots,x_{h})\in\rho\setminus\gamma)$ ; hence,
$\Pol\{\rho,\sigma\}\varsubsetneq\Pol\{\rho,\gamma\}$. Moreover,
consider the operation $g$ defined by
\[ g(x)=
\begin{cases}
x_{i} & \text{if }x=x_{i}\text{ for some }2\leq i\leq h,\\
x_{1} & \text{otherwise.}
\end{cases}
\]
Since $(x_{1},\ldots,x_{h})\in\rho$,
$(u,x_{2},\ldots,x_{h})\in\gamma$ and
$g(u,x_{2},\ldots,x_{h})=(x_{1},\ldots,x_{h})\notin\gamma$, we have
$g\in\Pol\rho\setminus\Pol\gamma$. Therefore,
$\Pol\{\rho,\sigma\}\varsubsetneq\Pol\{\rho,\gamma\}\varsubsetneq\Pol\rho$,
contradicting the maximality of $\Pol\{\rho, \sigma\}$ in
$\Pol\rho$.
\end{proof}

\begin{proof}[Proof (of Proposition \ref{prop 3})]
Combining Lemmas \ref{lem 14}--\ref{lem 16}  we obtain the result.
\end{proof}

 The above lemma closes the case $h=s$. Now we focus our attention on the case
 $h<s$. We begin this case by the following lemma:
\begin{lemma}\label{lem e}
Let $k\geq 3$, $\alpha$ an $n$-ary central relation and $\beta$ an
$m$-ary central relation on $E_{k}$ such that $2\leq n<m$. If
$\alpha_{n}:=\{(a_{1},\ldots,a_{n})\in E_{k}^{n}: \exists u\in
E_{k}, \forall 1\leq i_{1}<\cdots<i_{n-1}\leq n,
(a_{i_{1}},\ldots,a_{i_{n-1}},u)\in\alpha\text{ and
}(a_{1},\ldots,a_{n},\underset{(m-n)\text{
times}}{\underbrace{u,\ldots,u}})\in\beta\}$, then
$C_{\beta}\subseteq C_{\alpha}$.
\end{lemma}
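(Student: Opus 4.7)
The plan is to fix an arbitrary $b \in C_\beta$ and show that $b \in C_\alpha$, i.e.\ that $(b, a_2, \ldots, a_n) \in \alpha$ for every choice of $a_2, \ldots, a_n \in E_k$.

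The first step I would take is to realize the tuple $(b, a_2, \ldots, a_n)$ as an element of $\alpha_n$. Since $\alpha$ is central, $C_\alpha$ is nonempty, so I would pick $c \in C_\alpha$ and use $u = c$ as the witness required by the definition of $\alpha_n$. For any $(n-1)$-subset of positions of $(b, a_2, \ldots, a_n)$, the corresponding $(n-1)$-subtuple appended with $c$ lies in $\alpha$ just because $c \in C_\alpha$. Moreover the $m$-tuple $(b, a_2, \ldots, a_n, c, \ldots, c)$ lies in $\beta$ because it contains $b \in C_\beta$ and $\beta$ is totally symmetric. Hence $(b, a_2, \ldots, a_n) \in \alpha_n$.

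The main obstacle is then to pass from membership in $\alpha_n$ back to membership in $\alpha$. This is the substantive content of the statement: since $\alpha_n$ lies in the relational clone $[\{\alpha, \beta\}]$ (by the Remark in the preliminaries), one has $\Pol\{\alpha, \beta\} \subseteq \Pol \alpha_n \subseteq \Pol \alpha$, and in the maximality regime in which the lemma operates, $\alpha_n$ is forced to be contained in $\alpha$---otherwise $\Pol\{\alpha, \alpha_n\}$ would form an intermediate clone strictly between $\Pol\{\alpha, \beta\}$ and $\Pol\alpha$, contradicting the maximality assumption attached to the applications of this lemma. Granting $\alpha_n \subseteq \alpha$, the conclusion $(b, a_2, \ldots, a_n) \in \alpha$ is immediate from the preceding step, whence $b \in C_\alpha$ and therefore $C_\beta \subseteq C_\alpha$.
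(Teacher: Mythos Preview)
Your first step is correct and matches the paper's argument exactly: for $b \in C_\beta$ and arbitrary $a_2, \ldots, a_n$, taking $u = c \in C_\alpha$ witnesses that $(b, a_2, \ldots, a_n) \in \alpha_n$.

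The gap is in your second step. The lemma's hypothesis---obscured by the paper's elliptical phrasing ``If $\alpha_n := \{\ldots\}$''---is that $\alpha = \alpha_n$. The paper's own proof makes this explicit: it concludes with ``$(\omega, a_2, \ldots, a_n) \in \alpha_n = \alpha$'', treating the equality as given. The two applications of the lemma (the cases $\theta_h = \rho$ and $\theta_s = \sigma$) each supply this equality as an explicit hypothesis. Once $\alpha_n = \alpha$ is assumed, your first step already yields $(b, a_2, \ldots, a_n) \in \alpha$, and the proof is complete in one line.

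Your attempt to derive $\alpha_n \subseteq \alpha$ from a maximality argument is therefore misplaced: the lemma itself carries no maximality hypothesis, and importing one from ``the regime in which the lemma operates'' is not admissible in proving a standalone statement. In the surrounding argument, the cases where $\alpha_n \neq \alpha$ (that is, $\rho \subsetneq \theta_h$ or $\sigma \subsetneq \theta_s$) are not absorbed into this lemma at all; they are handled in separate lemmas by constructing explicit operations that witness a strict intermediate clone, which is considerably more work than the one-line appeal you sketch.
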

\begin{proof}
Let $\omega\in C_{\beta}, a_{2},\ldots,a_{n}\in E_{k}$, and $c\in
C_{\alpha}$. We have $(\omega,a_{2},\ldots,a_{n})\in\alpha_{n}$
because $(\omega,a_{2},\ldots,a_{n},\underset{(m-n)\text{
times}}{\underbrace{c,\ldots,c}})\in\beta$, and  for all $2\leq
i_{1}<\cdots<i_{n-1}\leq n$,
$(\omega,a_{i_{1}},\ldots,a_{i_{n-2}},c)$,
$(a_{2},\ldots,a_{n},c)\in\alpha$. Therefore,
$(\omega,a_{2},\ldots,a_{n})\in\alpha_{n}=\alpha$
 and $\omega\in C_{\alpha}$. Thus, $C_{\beta}\subseteq C_{\alpha}$.
\end{proof}

\begin{proposition}\label{prop 4}
Let $k\geq 3$, $\rho$ an $h$-ary central relation and $\sigma$ an
$s$-ary central relation on $E_{k}$ such that $2\leq h<s$. If
$\Pol\{\rho, \sigma\}$ is a maximal subclone of $\Pol\rho$, then
$\sigma$ is of type V.
\end{proposition}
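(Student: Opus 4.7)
The core idea is the intermediate relation $\gamma'=\lambda\cap\sigma$. Because $\lambda$ is obtained from $\rho$ by adjoining $s-h$ dummy coordinates, $\lambda\in[\{\rho\}]$, so $\Pol\rho\subseteq\Pol\lambda$ and hence $\gamma'\in[\{\rho,\sigma\}]$, yielding
\[
\Pol\{\rho,\sigma\}\subseteq\Pol\{\rho,\gamma'\}\subseteq\Pol\rho.
\]
My plan is to derive the two conditions defining type~V, namely $\lambda\neq\sigma$ and $\lambda\subseteq\sigma$. The inequality $\lambda\neq\sigma$ is immediate: $\lambda=\sigma$ would give $\sigma\in[\{\rho\}]$, forcing $\Pol\rho\subseteq\Pol\sigma$, contradicting that $\rho$ and $\sigma$ determine distinct maximal clones.

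To establish $\lambda\subseteq\sigma$ I argue by contradiction, so suppose $\lambda\not\subseteq\sigma$, equivalently $\gamma'\subsetneq\lambda$. I will exhibit strict inclusions on both sides of the sandwich above, contradicting maximality of $\Pol\{\rho,\sigma\}$ in $\Pol\rho$. For the upper strict inclusion $\Pol\{\rho,\gamma'\}\subsetneq\Pol\rho$ I imitate the tuple-targeting constructions of Lemmas~\ref{lem 15}--\ref{lem 16}: pick $(a_1,\ldots,a_s)\in\lambda\setminus\sigma$ (so $(a_1,\ldots,a_h)\in\rho$), choose a suitable tuple $(x_1,\ldots,x_s)\in\gamma'$ exploiting the nonempty centers of $\rho$ and $\sigma$ together with total reflexivity, fix $c\in C_\rho$, and define the unary operation $g$ sending $x_i\mapsto a_i$ for $1\leq i\leq s$ and every other element to $c$. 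Then $g\in\Pol\rho$ since the image $\{a_1,\ldots,a_s,c\}$ yields only $h$-tuples that either lie in $(a_1,\ldots,a_h)\in\rho$, contain the central element $c\in C_\rho$, or repeat a coordinate; yet $g(x_1,\ldots,x_s)=(a_1,\ldots,a_s)\notin\sigma$, so $g\notin\Pol\gamma'$.

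The lower strict inclusion $\Pol\{\rho,\sigma\}\subsetneq\Pol\{\rho,\gamma'\}$ splits according to whether $\sigma\subseteq\lambda$. If $\sigma\not\subseteq\lambda$ then $\gamma'\subsetneq\sigma$, and a dual tuple-targeting operation aimed at a tuple of $\sigma\setminus\lambda$ supplies $f\in\Pol\{\rho,\gamma'\}\setminus\Pol\sigma$. The main obstacle is the degenerate sub-case $\sigma\subseteq\lambda$: here $\gamma'=\sigma$ and $\Pol\{\rho,\gamma'\}=\Pol\{\rho,\sigma\}$, so $\gamma'$ no longer separates the two clones. To resolve this I invoke Lemma~\ref{lem e} with $\alpha=\rho$ and $\beta=\sigma$ to produce the $h$-ary relation $\alpha_h\in[\{\rho,\sigma\}]$; it is totally reflexive, totally symmetric, and satisfies $\emptyset\neq C_\sigma\subseteq C_{\alpha_h}$, so it is a genuine central relation, and the strict inclusion $\sigma\subsetneq\lambda$ lets me verify $\rho\subsetneq\alpha_h\subsetneq E_k^h$. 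Replaying the argument with $\alpha_h$ in place of $\gamma'$ then delivers $\Pol\{\rho,\sigma\}\subsetneq\Pol\{\rho,\alpha_h\}\subsetneq\Pol\rho$, the desired contradiction. The hardest step is precisely this handling of the sub-case $\sigma\subseteq\lambda$ and the verification that $\alpha_h$ is strictly intermediate between $\rho$ and $E_k^h$ while remaining a central relation.
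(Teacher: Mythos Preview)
Your unary operation $g$ does not preserve $\rho$. You send $x_i\mapsto a_i$ for $1\leq i\leq s$ and everything else to $c\in C_\rho$, and then assert that every image $h$-tuple either equals $(a_1,\ldots,a_h)$, contains $c$, or repeats a coordinate. But $s>h$, so the domain values $x_1,\ldots,x_s$ outnumber $h$; a tuple $(y_1,\ldots,y_h)\in\rho$ with $\{y_1,\ldots,y_h\}\subseteq\{x_1,\ldots,x_s\}$ pairwise distinct need not satisfy $\{y_1,\ldots,y_h\}=\{x_1,\ldots,x_h\}$, and its image $(a_{i_1},\ldots,a_{i_h})$ for an arbitrary $h$-subset of $\{1,\ldots,s\}$ has no reason to lie in $\rho$. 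You never specified the $x_i$, and no unary choice will work in general. This is precisely why the paper (Lemma~\ref{lem 30}) builds $q$-ary operations with $q=\binom{s}{h}$, engineered from a fixed $(b_1,\ldots,b_h)\notin\rho$ so that the special arguments $\mbox{\boldmath{$x$}}_1,\ldots,\mbox{\boldmath{$x$}}_s$ satisfy $(\mbox{\boldmath{$x$}}_{i_1},\ldots,\mbox{\boldmath{$x$}}_{i_h})\notin\rho$ for \emph{every} $h$-subset, forcing any $\rho$-tuple of arguments to hit the default value $c$. Moreover, that construction needs $c\in C_\rho\cap C_\sigma$, and establishing $C_\rho\cap C_\sigma\neq\emptyset$ is the bulk of the paper's work (Lemmas~\ref{lem 26}--\ref{lem 29}); you have bypassed this entirely.

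Your degenerate sub-case $\sigma\subseteq\lambda$ is also broken. You invoke Lemma~\ref{lem e} to ``produce'' $\alpha_h$ and claim $\rho\subsetneq\alpha_h\subsetneq E_k^h$, but Lemma~\ref{lem e} has the hypothesis $\alpha_h=\alpha$ (here $\theta_h=\rho$) built in and concludes $C_\sigma\subseteq C_\rho$; it does not manufacture an intermediate relation. In the paper's decomposition all three possibilities $\theta_h=\rho$, $\rho\subsetneq\theta_h\subsetneq E_k^h$, and $\theta_h=E_k^h$ genuinely occur and are handled separately; in particular $\theta_h=\rho$ is not excluded by $\sigma\subsetneq\lambda$, and in that case your replacement relation collapses to $\rho$, so the sandwich is again trivial.
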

The proof of Proposition \ref{prop 4} follows from Lemmas \ref{lem
26}--\ref{lem 30}.
For $h\leq t\leq s-1$, we set\\
$\theta_{t}=\{(a_{1},\ldots,a_{t})\in E_{k}^{t}:\exists u\in E_{k}:
\forall  1\leq i_{1}<\cdots<i_{h-1}\leq t:
(a_{i_{1}},\ldots,a_{i_{h-1}},u)\in \rho \wedge
(a_{1},\ldots,a_{t},\underset{(s-t)\text{ times}}
{\underbrace{u,\ldots,u}})\in\sigma\}$. Clearly
$\rho\subseteq\theta_{h}\subseteq E_{k}^{h}$ and $\theta_{h}$ is
totally symmetric.
 We will show that $C_{\rho}\cap C_{\sigma}\neq\emptyset$. $\theta_{h}$ fulfills one of the following three cases:

 (4.1) $\rho=\theta_{h}$, (4.2) $\rho\subsetneq \theta_{h}\subsetneq E_{k}^{h}$ and (4.3) $\theta_{h}=E_{k}^{h}$.
\begin{lemma}\label{lem 26}
Under the assumptions of Proposition \ref{prop 4} and
$\theta_{h}=\rho$, we obtain $C_{\sigma}\subseteq C_{\rho}$.
\end{lemma}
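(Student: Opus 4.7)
The plan is to observe that the relation $\theta_{h}$ in the case $t=h$ is literally the relation $\alpha_{n}$ constructed in Lemma \ref{lem e}, under the correspondence $\alpha\leftrightarrow\rho$, $\beta\leftrightarrow\sigma$, $n\leftrightarrow h$, $m\leftrightarrow s$. Indeed, both defining conditions (the $\rho$-conditions on every $(h-1)$-sub-tuple together with a witness $u$, and the $\sigma$-condition after padding with $s-h$ copies of $u$) match up verbatim. The standing hypothesis $\theta_{h}=\rho$ is therefore exactly the equality $\alpha_{n}=\alpha$ that drives the proof of Lemma \ref{lem e}, and the conclusion $C_{\beta}\subseteq C_{\alpha}$ translates immediately into the desired $C_{\sigma}\subseteq C_{\rho}$.

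For clarity I would reproduce the one-line verification in the present notation. Pick $\omega\in C_{\sigma}$ and arbitrary $a_{2},\ldots,a_{h}\in E_{k}$; I aim to show $(\omega,a_{2},\ldots,a_{h})\in\rho$. Choose any $c\in C_{\rho}$, which is nonvoid because $\rho$ is central. I claim $u:=c$ witnesses membership of $(\omega,a_{2},\ldots,a_{h})$ in $\theta_{h}$. For every choice of indices $1\leq i_{1}<\cdots<i_{h-1}\leq h$, the tuple $(a_{i_{1}},\ldots,a_{i_{h-1}},c)$ lies in $\rho$ because $c\in C_{\rho}$ (here $a_{1}:=\omega$, and this works regardless of which of the $h$ possible $(h-1)$-subsets is chosen). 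Simultaneously $(\omega,a_{2},\ldots,a_{h},\underbrace{c,\ldots,c}_{s-h})\in\sigma$ because $\omega\in C_{\sigma}$ and centrality in a totally symmetric relation is independent of the remaining coordinates. Hence $(\omega,a_{2},\ldots,a_{h})\in\theta_{h}=\rho$, and since $a_{2},\ldots,a_{h}$ were arbitrary, $\omega\in C_{\rho}$.

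No substantive obstacle is anticipated: the entire content of the lemma is the identification of $\theta_{h}$ with the $\alpha_{n}$ of Lemma \ref{lem e} plus the observation that a single element $c\in C_{\rho}$ simultaneously validates both the $\rho$-witness conditions (via total reflexivity of $\rho$ through centrality of $c$) and the $\sigma$-padding condition (via centrality of $\omega$ in $\sigma$). The hypothesis $h<s$ is used implicitly only in that the padding by $s-h\geq 1$ copies of $c$ is nontrivial; the core argument does not require the full strength of maximality of $\Pol\{\rho,\sigma\}$ in $\Pol\rho$, which makes this lemma purely structural and the cleanest of the three cases (4.1)--(4.3).
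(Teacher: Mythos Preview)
Your proposal is correct and matches the paper's approach exactly: the paper's proof is the single line ``It follows from Lemma~\ref{lem e}'', and you have correctly identified the dictionary $\alpha\leftrightarrow\rho$, $\beta\leftrightarrow\sigma$, $n\leftrightarrow h$, $m\leftrightarrow s$ under which $\theta_{h}$ coincides with the relation $\alpha_{n}$ of Lemma~\ref{lem e} and the hypothesis $\theta_{h}=\rho$ becomes $\alpha_{n}=\alpha$. The explicit verification you wrote out is essentially a reproduction of the proof of Lemma~\ref{lem e} in the present notation, so nothing is missing.
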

\begin{proof}
It follows from Lemma \ref{lem e}.
\end{proof}
\begin{lemma}\label{lem 27}
Under the assumptions of Proposition \ref{prop 4}, the case
$\rho\varsubsetneq \theta_{h}\varsubsetneq E_{k}^{h}$ is impossible.
  \end{lemma}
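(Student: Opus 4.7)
The plan is to interpose the clone $\Pol\{\rho,\theta_h\}$ strictly between $\Pol\{\rho,\sigma\}$ and $\Pol\rho$, contradicting the assumed submaximality of $\Pol\{\rho,\sigma\}$ in $\Pol\rho$. Since $\theta_h$ lies in $[\{\rho,\sigma\}]$ by construction, the inclusion $\Pol\{\rho,\sigma\}\subseteq \Pol\{\rho,\theta_h\}\subseteq \Pol\rho$ is automatic; I must upgrade both inclusions to strict ones.

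First I would verify that $\theta_h$ is an $h$-ary central relation. Total symmetry is immediate from the definition; total reflexivity follows by taking the witness $u$ equal to a repeated coordinate and invoking total reflexivity of $\rho$ and $\sigma$; and the center $C_{\theta_h}$ contains both $C_\rho$ and $C_\sigma$ (given $c\in C_\rho$ or $\omega\in C_\sigma$, choose $u=c$: the $(h-1)$-subtuples augmented by $c$ are in $\rho$ since $c\in C_\rho$, and the $\sigma$-tuple either repeats $c$ because $s>h$ or begins with the central element $\omega$). Hence $C_{\theta_h}\neq\emptyset$, and $C_{\theta_h}\subsetneq E_k$ because $\theta_h\subsetneq E_k^h$. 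Thus $\theta_h$ and $\rho$ are distinct nontrivial $h$-ary central relations, and by the preliminary fact their polymorphism clones are distinct maximal clones, so $\Pol\{\rho,\theta_h\}=\Pol\rho\cap\Pol\theta_h\subsetneq \Pol\rho$.

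I would then split into two cases. When $s-h\geq 2$, the $\sigma$-condition $(a_1,\ldots,a_h,u,\ldots,u)\in\sigma$ in the definition of $\theta_h$ is automatically satisfied by total reflexivity (the appended $u$'s contain a repeat), so $\theta_h$ is primitively positive definable from $\rho$ alone, i.e., $\theta_h\in[\{\rho\}]$, whence $\Pol\rho\subseteq\Pol\theta_h$. Since $\Pol\rho$ is maximal in $\mathcal{O}_{E_k}$ and $\theta_h$ is nontrivial, this forces $\Pol\theta_h=\Pol\rho$; but distinct nontrivial central relations have distinct polymorphism clones, so $\rho=\theta_h$, contradicting $\rho\subsetneq\theta_h$. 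This dispatches the case $s\geq h+2$ cleanly without constructing any operation.

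The main obstacle is the remaining case $s=h+1$, where the $\sigma$-condition is genuine, and one needs an operation $f\in \Pol\{\rho,\theta_h\}\setminus \Pol\sigma$ to force $\Pol\{\rho,\sigma\}\subsetneq \Pol\{\rho,\theta_h\}$. I would follow the $q$-ary construction pattern of Lemmas \ref{lem 12}, \ref{lem 13}, and \ref{lem 15}: pick $(x_1,\ldots,x_h)\in\theta_h\setminus\rho$ (necessarily with distinct entries) with its witness $u$, a target tuple $(v_1,\ldots,v_{h+1})\notin\sigma$, and $c\in C_\rho$; with $q=h+1$, enumerate the $h$-subsets of $\{1,\ldots,h+1\}$, build the tuples $\mathbf{y}_j\in\sigma$ (each a permutation of $(x_1,\ldots,x_h,u)\in\sigma$, hence in $\sigma$ by total symmetry), the pairwise distinct $\mathbf{x}_i=(\mathbf{y}_1(i),\ldots,\mathbf{y}_q(i))$, and set $f(\mathbf{x}_i)=v_i$ with $f=c$ elsewhere. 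A column-by-column analysis shows $f\in\Pol\rho$ (the column indexed by $\{j_1,\ldots,j_h\}$ is a permutation of $(x_1,\ldots,x_h)\notin\rho$, so the premise is vacuous) and $f\in\Pol\theta_h$ provided every $h$-sub-tuple of $(v_1,\ldots,v_{h+1})$ lies in $\theta_h$. The delicate point is arranging this last condition: my first attempt is to choose the $v_i$'s in $C_{\theta_h}\setminus C_\sigma$ so that every sub-tuple is $\theta_h$-central while no $v_i$ is $\sigma$-central (so $(v_1,\ldots,v_{h+1})\notin\sigma$); should $C_{\theta_h}\subseteq C_\sigma$ obstruct this, I would refine by allowing some non-central $v_i$'s and verifying the sub-tuples lie in $\theta_h$ directly, or split into further subcases according to the relative positions of $C_\rho$ and $C_\sigma$. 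Once the strict chain $\Pol\{\rho,\sigma\}\subsetneq\Pol\{\rho,\theta_h\}\subsetneq\Pol\rho$ is secured, it contradicts the maximality of $\Pol\{\rho,\sigma\}$ in $\Pol\rho$, completing the proof.
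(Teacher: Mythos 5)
Your overall strategy---interposing $\Pol\{\rho,\theta_{h}\}$ strictly between $\Pol\{\rho,\sigma\}$ and $\Pol\rho$---is exactly the paper's, and two of your ingredients are sound: $\theta_{h}$ is indeed a central relation with $C_{\rho}\cup C_{\sigma}\subseteq C_{\theta_{h}}$, so $\Pol\{\rho,\theta_{h}\}\subsetneq\Pol\rho$; and your observation that for $s\geq h+2$ the $\sigma$-clause in the definition of $\theta_{h}$ is vacuous, so that $\theta_{h}\in[\{\rho\}]$ and the configuration $\rho\subsetneq\theta_{h}\subsetneq E_{k}^{h}$ is outright impossible, is a correct (and rather elegant) shortcut that the paper does not use. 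The genuine gap is the case $s=h+1$, which you leave unresolved. Because you build the matrix from a tuple $(x_{1},\ldots,x_{h})\in\theta_{h}\setminus\rho$ together with its witness $u$, any $h$ pairwise distinct columns of your matrix \emph{are} coordinatewise in $\theta_{h}$: the row omitting precisely the complementary position restricts to $(x_{1},\ldots,x_{h})\in\theta_{h}$, and every other row restricts to a permutation of some $(x_{i_{1}},\ldots,x_{i_{h-1}},u)\in\rho\subseteq\theta_{h}$. Hence $f\in\Pol\theta_{h}$ really does force every $h$-subtuple of $(v_{1},\ldots,v_{h+1})$ to lie in $\theta_{h}$ while $(v_{1},\ldots,v_{h+1})\notin\sigma$, and you never prove such a tuple exists. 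Your first attempt does not settle it: taking $v_{1},\ldots,v_{h+1}$ in $C_{\theta_{h}}\setminus C_{\sigma}$ makes the subtuples central for $\theta_{h}$, but being non-central for $\sigma$ in no way prevents $(v_{1},\ldots,v_{h+1})$ from belonging to $\sigma$ (a tuple of pairwise distinct non-central elements may well lie in $\sigma$), and $C_{\theta_{h}}\setminus C_{\sigma}$ need not even contain $h+1$ distinct elements; the announced fallback (``refine or split into further subcases'') is not an argument.

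The paper sidesteps this entirely by turning the construction around: it picks $(u_{1},\ldots,u_{h})\in E_{k}^{h}\setminus\theta_{h}$ (available since $\theta_{h}\subsetneq E_{k}^{h}$) and an arbitrary $(v_{1},\ldots,v_{s})\in E_{k}^{s}\setminus\sigma$, and forms the $s$-tuples $\mbox{\boldmath{$y$}}_{j}$ by placing $u_{1},\ldots,u_{h}$ on the $j$-th $h$-subset of the $s$ positions and $u_{1}$ elsewhere. Since $s>h$, each $\mbox{\boldmath{$y$}}_{j}$ has a repeated entry and hence lies in $\sigma$; and any $h$ distinct columns restrict, on a suitable row, exactly to $(u_{1},\ldots,u_{h})\notin\theta_{h}\supseteq\rho$, so they are coordinatewise in neither $\rho$ nor $\theta_{h}$. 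Consequently the operation sending the $i$-th column to $v_{i}$ and everything else to $c\in C_{\rho}\subseteq C_{\theta_{h}}$ preserves $\rho$ and $\theta_{h}$ for trivial reasons, yet maps $(\mbox{\boldmath{$y$}}_{1},\ldots,\mbox{\boldmath{$y$}}_{q})$ to $(v_{1},\ldots,v_{s})\notin\sigma$. This choice of a tuple \emph{outside} $\theta_{h}$, rather than one in $\theta_{h}\setminus\rho$, is precisely the repair your $s=h+1$ case needs, and it handles all $s>h$ uniformly without any case split.
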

 \begin{proof}
Since $\rho\varsubsetneq\theta_{h}\varsubsetneq E_{k}^{h}$,
$\theta_{h}$ is a central relation of type III and there is
$(u_{1},\ldots,u_{h})$ in $E_{k}^{h}\setminus\theta_{h}$. From
$\sigma\varsubsetneq E_{k}^{s}$, choose $(v_{1},\ldots, v_{s})\in
E_{k}^{s}\setminus\sigma$. Let $W'=\{(i_{1},\ldots,i_{h}): 1\leq
i_{1}<i_{2}<\cdots<i_{h}\leq s\}$. For the reason of simpler
notations we set $W'=\{(i_{1}^{j},\ldots,i_{h}^{j}):1\leq j\leq
q\}$. Set $\mbox{\boldmath{$y$}}_{j}=(x_{j,1},\ldots,x_{j,s})$ with
 \[
x_{j,p}=
\begin{cases}
u_{l} & \text{if }p=i_{l}^{j}\text{ for some }1\leq l\leq h, \\
u_{1} & \text{otherwise.}
\end{cases}
\]
Set $\mbox{\boldmath{$x$}}_{i}=(x_{1,i},\ldots,x_{q,i}),1\leq i\leq
s$. We have $(\mbox{\boldmath{$x$}}_{i_{1}},
\ldots,\mbox{\boldmath{$x$}}_{i_{h}})\not\in\theta_{h}$(moreover, it
is not in $\rho$) for $1\leq i_{1}<i_{2}<\ldots<i_{h}\leq s$
($\ast$) (due to $(u_{1},\ldots,u_{h})\not\in\theta_{h}$). Consider
the $q$-ary operation $f$ defined on $E_{k}$ by
\[
f(\mbox{\boldmath{$x$}})=
\begin{cases}
v_{i} & \text{if }\mbox{\boldmath{$x$}}=\mbox{\boldmath{$x$}}_{i}\text{ for some }1\leq i\leq s, \\
c & \text{otherwise.}
\end{cases}
\]
This operation is well defined, similarly as in Lemma \ref{lem 12},
and we have $f\in \Pol\{\rho,\theta_{h}\}$ (due to $c\in C_{\rho}$
and ($\ast$)). Furthermore,
$\{\mbox{\boldmath{$y$}}_{1},\ldots,\mbox{\boldmath{$y$}}_{q}\}\subseteq\sigma$(due
to $s>h$ and $\sigma$ totally reflexive), but
$(f(\mbox{\boldmath{$x$}}_{1}),\ldots,f(\mbox{\boldmath{$x$}}_{s}))=(v_{1},\ldots,v_{s})\not\in
\sigma$; so, $f\not\in \Pol\sigma$. Therefore $\Pol\{\rho,
\sigma\}\varsubsetneq \Pol\{\rho, \theta_{h}\}\varsubsetneq
\Pol\rho$, contradicting the maximality of $\Pol\{\rho, \sigma\}$ in
$\Pol\rho$.
\end{proof}

\begin{lemma}\label{lem 28}
Under the assumptions of Proposition \ref{prop 4} and
$\theta_{h}=E_{k}^{h}$, we have $\theta_{s-1}=E_{k}^{s-1}$.
\end{lemma}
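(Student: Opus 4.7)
The plan is to argue by contradiction. Suppose $\theta_{s-1}\ne E_{k}^{s-1}$, and let $t_0$ be the smallest integer in $\{h,\ldots,s-1\}$ with $\theta_{t_0}\ne E_{k}^{t_0}$. Since the hypothesis gives $\theta_{h}=E_{k}^{h}$, we have $t_0>h$, and by minimality $\theta_{t_0-1}=E_{k}^{t_0-1}$. My goal is to produce the strict chain
\[
\Pol\{\rho,\sigma\}\subsetneq\Pol\{\rho,\theta_{t_0}\}\subsetneq\Pol\rho,
\]
contradicting the maximality of $\Pol\{\rho,\sigma\}$ in $\Pol\rho$ assumed in Proposition \ref{prop 4}.

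First I would verify that $\theta_{t_0}$ is itself a central relation. Total symmetry is immediate from the definition, and for total reflexivity, any $t_0$-tuple with a repeated entry admits the witness $u=c$ for arbitrary $c\in C_{\rho}$ (the $\rho$-conditions $(a_{i_1},\ldots,a_{i_{h-1}},c)\in\rho$ hold by centrality of $c$, and the $\sigma$-condition $(a_1,\ldots,a_{t_0},c,\ldots,c)\in\sigma$ holds by total reflexivity of $\sigma$). The same computation gives $C_{\rho}\subseteq C_{\theta_{t_0}}$, so the center is nonempty; it is a proper subset of $E_k$ because $\theta_{t_0}\ne E_{k}^{t_0}$ together with total symmetry. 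Since $t_0>h$, the central relations $\theta_{t_0}$ and $\rho$ have different arities, so $\Pol\theta_{t_0}$ is a maximal clone distinct from $\Pol\rho$, which gives $\Pol\{\rho,\theta_{t_0}\}\subsetneq\Pol\rho$ automatically.

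The main step is to construct an operation $f\in\Pol\{\rho,\theta_{t_0}\}\setminus\Pol\sigma$, which I would model on Lemmas \ref{lem 12} and \ref{lem 27}. Pick $(v_1,\ldots,v_s)\in E_{k}^{s}\setminus\sigma$ and $(u_1,\ldots,u_{t_0})\in E_{k}^{t_0}\setminus\theta_{t_0}$, the latter having pairwise distinct coordinates by total reflexivity of $\theta_{t_0}$. Enumerate the $t_0$-subsets of $\{1,\ldots,s\}$ as $\{(i_1^j,\ldots,i_{t_0}^j):1\le j\le q\}$, and form $\mathbf{y}_j\in E_{k}^{s}$ by $x_{j,p}=u_l$ if $p=i_l^j$ and $x_{j,p}=u_1$ otherwise. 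Setting $\mathbf{x}_i=(x_{1,i},\ldots,x_{q,i})\in E_{k}^{q}$ (pairwise distinct, after the same minor adjustment used in Lemma \ref{lem 27}), define the $q$-ary operation $f(\mathbf{x})=v_i$ if $\mathbf{x}=\mathbf{x}_i$ and $f(\mathbf{x})=c$ otherwise, for a fixed $c\in C_{\rho}$. Each $\mathbf{y}_j$ has values in a set of size at most $t_0<s$, hence has a coordinate repeat and lies in $\sigma$; yet coordinatewise $f$ sends $(\mathbf{y}_1,\ldots,\mathbf{y}_q)$ to $(v_1,\ldots,v_s)\notin\sigma$, so $f\notin\Pol\sigma$.

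The main obstacle is the dual verification $f\in\Pol\rho\cap\Pol\theta_{t_0}$. Preservation of $\rho$ follows the template of Lemma \ref{lem 27}: coordinates on which $f$ outputs $c$ are absorbed by centrality, and no $h$-subset of distinct columns $(\mathbf{x}_{i_1},\ldots,\mathbf{x}_{i_h})$ lies in $\rho$, since the combinatorics of the $\mathbf{y}_j$'s encodes the forbidden configuration $(u_1,\ldots,u_{t_0})\notin\theta_{t_0}$. Preservation of $\theta_{t_0}$ is the most delicate part: the forbidden tuple $(u_1,\ldots,u_{t_0})$ rules out the critical $t_0$-subsets of columns, while the minimality of $t_0$ via $\theta_{t_0-1}=E_{k}^{t_0-1}$ blocks strictly shorter subconfigurations from providing an accidental witness. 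Once these two preservation checks succeed, the displayed chain of strict inclusions yields the required contradiction and finishes the proof.
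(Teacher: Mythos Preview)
Your verification that $f\in\Pol\theta_{t_0}$ would in fact go through: for any $t_0$ distinct columns $\{\mathbf{x}_{p_1},\ldots,\mathbf{x}_{p_{t_0}}\}$ there is the row $j$ with $S_j=\{p_1,\ldots,p_{t_0}\}$, on which the entries are exactly $(u_1,\ldots,u_{t_0})\notin\theta_{t_0}$. The gap is in the step you think is routine, namely $f\in\Pol\rho$. Your claim that ``no $h$-subset of distinct columns $(\mathbf{x}_{i_1},\ldots,\mathbf{x}_{i_h})$ lies in $\rho$'' does not follow from the construction. In Lemma~\ref{lem 27} this works because the forbidden tuple has arity $h$ and one enumerates $h$-subsets, so each $h$-subset of columns has a row projecting exactly to $(u_1,\ldots,u_h)\notin\rho$. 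Here you enumerate $t_0$-subsets with $t_0>h$; an $h$-subset of columns then projects, on any row, to some $h$-subtuple of $(u_1,\ldots,u_{t_0})$, and $(u_1,\ldots,u_{t_0})\notin\theta_{t_0}$ gives no control over which $h$-subtuples lie in $\rho$. Concretely, take $h=2$, $s=4$, $t_0=3$ with $(u_1,u_2)\in\rho$ but $(u_1,u_3),(u_2,u_3)\notin\rho$; one checks that each witness $u\in\{u_1,u_2,u_3\}$ fails the $\rho$-condition in $\theta_3$, while a central $c$ can be made to fail the $\sigma$-condition, so $(u_1,u_2,u_3)\notin\theta_3$ is possible. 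Yet every row of $(\mathbf{x}_1,\mathbf{x}_2)$ is $(u_1,u_1)$ or $(u_1,u_2)$, both in $\rho$, so $(\mathbf{x}_1,\mathbf{x}_2)\in\rho$ and $f$ sends it to $(v_1,v_2)$, which need not be in $\rho$.

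The paper avoids this entirely by a preliminary observation you missed: taking the witness $u=c\in C_\rho$ in the definition of $\theta_{t_0}$, the $\rho$-clauses are automatic, so any $(d_1,\ldots,d_{t_0})\notin\theta_{t_0}$ must satisfy $(d_1,\ldots,d_{t_0},\underbrace{c,\ldots,c}_{s-t_0})\notin\sigma$. Total reflexivity of $\sigma$ forces $s-t_0=1$, i.e.\ $t_0=s-1$, and moreover the $d_i$ are pairwise distinct and all differ from $c$ and from any $\omega\in C_\sigma$. With these facts in hand the paper uses a \emph{unary} operation $f$ (identity on $\{d_1,\ldots,d_{s-1}\}$, constant $c$ elsewhere): it kills $\sigma$ via $(d_1,\ldots,d_{s-1},\omega)\mapsto(d_1,\ldots,d_{s-1},c)$, while preservation of $\rho$ and of $\theta_{s-1}$ follows by an elementary case split on how many arguments lie outside $\{d_1,\ldots,d_{s-1}\}$, using $c\in C_\rho\subseteq C_{\theta_{s-1}}$ and total reflexivity. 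No $q$-ary matrix construction is needed.
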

\begin{proof}
Let $c\in C_{\rho}$ and $\omega\in C_{\sigma}$. Moreover $h<n\leq
s-1$ was chosen as the least index such that $\theta_{n}\neq
E_{k}^{n}$. So we can pick a tuple $(d_{1},\ldots,d_{n})\in
E_{k}^{n}\setminus\theta_{n}$. Because this tuple does not belong to
$\theta_{n}$ and $c\in C_{\rho}$, it follows that
$(d_{1},\ldots,d_{n},\underset{(s-n)\text{
times}}{\underbrace{c,\ldots,c}})\notin~\sigma$. This implies that
$n=s-1$, $\omega\notin\{d_{1},\ldots,d_{s-1},c\}$,
$c\notin\{d_{1},\ldots,d_{s-1}\}$, and that $d_{i}\neq d_{j}$ for
$1\leq i<j\leq s-1$. Moreover, we will show that $\theta_{s-1}$ is a
central relation. By definition, $\theta_{s-1}$ is totally reflexive
and totally symmetric. Let $v_{2},\ldots,v_{s-1}\in E_{k}$, and
$\{i_{1},\ldots,i_{h-1}\}\subseteq\{2,\ldots,s-1\}$; we have
$(v_{i_{1}},\ldots,v_{i_{h-1}},c)$,
$(v_{i_{1}},\ldots,v_{i_{h-2}},c,c)\in\rho$ because $c\in C_{\rho}$
and $\rho$ is totally reflexive. Moreover,
$(c,v_{2},\ldots,v_{s-1},c)\in\sigma$. Therefore,
$(c,v_{2},\ldots,v_{s-1})\in\theta_{s-1}$ and $\theta_{s-1}$ is a
central relation with $c\in C_{\theta_{s-1}}$.

Let us define a unary operation $f$ by $f(x)=x$ if
$x\in\{d_{1},\ldots,d_{s-1}\}$ and $f(x)=c$ if
$x\notin\{d_{1},\ldots,d_{s-1}\}$. Since $\omega\in C_{\sigma}$, we
have $(d_{1},\ldots,d_{s-1},\omega)\in\sigma$, but $f$ maps this
tuple to $(d_{1},\ldots,d_{s-1},c)\notin\sigma$. Therefore,
$f\notin\Pol\sigma$. We shall demonstrate further that
$f\in\Pol\{\rho,\theta_{n}\}$.

First let $(x_{1},\ldots,x_{h})\in\rho$. If there is $1\leq i\leq h$
such that $x_{i}\notin\{d_{1},\ldots,d_{s-1}\}$, then
$(f(x_{1}),\ldots,f(x_{h}))\in\rho$ because $c\in C_{\rho}$.
Otherwise, $\{x_{1},\ldots,x_{h}\}\subseteq\{d_{1},\ldots,d_{n}\}$,
so $(f(x_{1}),\ldots,f(x_{h}))=(x_{1},\ldots,x_{h})\in\rho$.
Consequently, we have $f\in\Pol\rho$.

Now we consider $(x_{1},\ldots,x_{s-1})\in\theta_{s-1}$. If
$x_{i}=x_{j}$ for some $1\leq i<j\leq s-1$, then
$(f(x_{1}),\ldots,f(x_{s-1}))\in\theta_{s-1}$ because we already
know that $\theta_{s-1}$ is totally reflexive. Likewise, if there
are $1\leq i<j\leq s-1$ such that
$x_{i},x_{j}\notin\{d_{1},\ldots,d_{s-1}\}$, then we are again done
by total reflexivity of $\theta_{n}$. If there is no index $1\leq
i\leq s-1$ such that $x_{i}\notin\{d_{1},\ldots,d_{s-1}\}$, then
$\{x_{1},\ldots,x_{s-1}\}\subseteq\{d_{1},\ldots,d_{s-1})$, so, as
above,
$(f(x_{1}),\ldots,f(x_{s-1}))=(x_{1},\ldots,x_{s-1})\in\theta_{s-1}$.
The remaining case is when there is exactly one $1\leq i\leq s-1$
such that $x_{i}\notin\{d_{1},\ldots,d_{s-1}\}$ and
$\{x_{1},\ldots,x_{i-1},x_{i+1},\ldots,x_{s-1}\}\subseteq\{d_{1},\ldots,d_{s-1}\}$
with all $x_{j}$ ($j\neq i$) being pairwise distinct. Consequently,
$(f(x_{1}),\ldots,f(x_{s-1}))=(d_{l_{1}},\ldots,d_{l_{i-1}},c,d_{l_{i}},\ldots,d_{l_{s-2}})\in\theta_{s-1}$
for some $\{l_{1},\ldots,l_{s-1}\}\subseteq\{1,\ldots,s-1\}$ (due to
$c\in C_{\theta_{s-1}}$). Therefore, $f\in\Pol\theta_{s-1}$.

From $s-1>h$, $\rho$ and $\theta_{h}$ are two different central
relations. Hence, $\Pol\rho $ and $\Pol\theta_{s-1}$ are two
different maximal clones. Therefore, $\Pol\{\rho,
\theta_{s-1}\}\varsubsetneq \Pol\rho$, contradicting the maximality
of $\Pol\{\rho, \sigma\}$ in $\Pol\rho$.
\end{proof}

Now we continue our investigation with the fact that $\theta_{s-1}=E_{k}^{s-1}$. Therefore for
 every $(a_{2},\ldots,a_{s})\in E_{k}^{s-1}=\theta_{s-1}$, there exists $u\in E_{k}$ such that $(a_{2},\ldots,a_{s},u)\in\sigma$ and for all $2\leq i_{1}<\ldots<i_{h-1}\leq s$,
  $(a_{i_{1}},\ldots,a_{i_{h-1}},u)\in\rho$.
\begin{lemma}\label{lem 29}
Under the assumptions of Proposition \ref{prop 4} and $\theta_{s-1}=E_{k}^{s-1}$, we obtain $C_{\rho}\cap C_{\sigma}\neq \emptyset$.
\end{lemma}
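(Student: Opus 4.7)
The plan is to prove the lemma by contradiction: assume $C_{\rho} \cap C_{\sigma} = \emptyset$ and derive a contradiction with the maximality of $\Pol\{\rho, \sigma\}$ in $\Pol\rho$, following the strategy of Lemmas~\ref{lem 27}--\ref{lem 28}. The hypothesis $\theta_{s-1} = E_{k}^{s-1}$ provides, for every $(a_{2}, \ldots, a_{s}) \in E_{k}^{s-1}$, a witness $u \in E_{k}$ with $(a_{2}, \ldots, a_{s}, u) \in \sigma$ and $(a_{i_{1}}, \ldots, a_{i_{h-1}}, u) \in \rho$ for all $(h{-}1)$-subsets of $\{2, \ldots, s\}$. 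Under the assumption $C_{\rho} \cap C_{\sigma} = \emptyset$, no single $u \in E_{k}$ can serve as witness for every tuple simultaneously: such a universal witness would, by total symmetry of $\sigma$, lie in $C_{\sigma}$ and, ranging over all tuples $(a_{2}, \ldots, a_{s})$, be $\rho$-related to every $(h{-}1)$-subset of $E_{k}$, forcing it into $C_{\rho}$ as well.

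The natural candidate for the intermediate relation $\tau \in [\{\rho, \sigma\}]$ is the $s$-ary extension of the $\theta_{t}$-scheme,
\[\tau := \{(a_{1}, \ldots, a_{s}) \in E_{k}^{s} : \exists u \in E_{k},\; (a_{i_{1}}, \ldots, a_{i_{h-1}}, u) \in \rho \text{ for all } 1 \leq i_{1} < \cdots < i_{h-1} \leq s,\text{ and } (a_{1}, \ldots, a_{s}, u) \in \bar{\sigma}\},\]
where $\bar{\sigma} \in [\{\rho, \sigma\}]$ is an auxiliary $(s{+}1)$-ary relation capturing a $\sigma$-like constraint (for instance, an intersection of coordinate-permutations of a relation derived from $\sigma$, mirroring how $\sigma$ appears in the original $\theta_{t}$). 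By design, $\tau$ is pp-definable from $\{\rho, \sigma\}$, totally reflexive, totally symmetric, contains $\rho$, and satisfies $\tau \subsetneq E_{k}^{s}$ under $C_{\rho} \cap C_{\sigma} = \emptyset$ via the observation of the previous paragraph.

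The two strict inclusions are then verified by the template of Lemma~\ref{lem 28}. For $\Pol\{\rho, \tau\} \subsetneq \Pol\rho$: choose $(v_{1}, \ldots, v_{s}) \notin \tau$ and build a $q$-ary operation $f$ using skeleton tuples $\mathbf{y}_{j}$ indexed by $h$-element subsets of $\{1, \ldots, s\}$, with $f \in \Pol\rho \setminus \Pol\tau$ via $c \in C_{\rho}$ and a non-preservation witness at $(\mathbf{y}_{1}, \ldots, \mathbf{y}_{q})$. For $\Pol\{\rho, \sigma\} \subsetneq \Pol\{\rho, \tau\}$: construct a unary operation with bounded image, sending every element outside a distinguished subset to $c \in C_{\rho}$, which preserves $\rho$ and $\tau$ (by centrality of $c$ and total reflexivity) but violates $\sigma$ at an input derived from a tuple in $\sigma \setminus \tau$. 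The main obstacle is fixing the precise definition of $\tau$ so that pp-definability from $\{\rho, \sigma\}$ is transparent, $\tau \subsetneq E_{k}^{s}$ follows from the negation of the conclusion, and both separating operations can actually be built with the required preservation properties; this delicate balancing act is the technical heart of the argument, analogous to the elaborate construction in Lemma~\ref{lem 28}.
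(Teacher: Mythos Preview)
Your proposal has the right shape but a real gap at its center. You assert that your relation $\tau$ satisfies $\tau \subsetneq E_{k}^{s}$ because ``no single $u$ can serve as witness for \emph{every} tuple simultaneously.'' That observation is correct, but it does not give what you need: to have $\tau \subsetneq E_{k}^{s}$ you would need \emph{some particular} tuple $(a_{1},\ldots,a_{s})$ with \emph{no} witness at all, and nothing in your argument rules out that each tuple has its own (varying) witness. The paper runs into exactly this obstruction, and resolves it in two different ways depending on a case split you have not made.

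Concretely, after fixing $c\in C_{\rho}$ with $(c,a_{2},\ldots,a_{s})\notin\sigma$ and obtaining a $\theta_{s-1}$-witness $u$, the paper asks whether $(c,u,a_{i_{1}},\ldots,a_{i_{s-2}})\notin\sigma$ for some choice of indices. If so (Case A), the intermediate relation is $\gamma_{s}=\{(x_{1},\ldots,x_{s})\in\sigma:\ (x_{1},x_{i_{1}},\ldots,x_{i_{h-1}}),(x_{2},x_{i_{1}},\ldots,x_{i_{h-1}})\in\rho\ \text{for all indices}\}$, which is \emph{not} of your existential-witness form and requires a bespoke binary operation to separate $\Pol\gamma_{s}$ from $\Pol\rho$. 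If not (Case B), the paper uses relations $\gamma'_{t}$ of your witness type but at arities $t\geq s$, taking the \emph{least} $n_{0}$ with $\gamma'_{n_{0}}\neq E_{k}^{n_{0}}$; the point is that $\gamma'_{s}$ may well equal $E_{k}^{s}$, and only $C_{\rho}\cap C_{\sigma}=\emptyset$ guarantees $\gamma'_{k}\neq E_{k}^{k}$. Your single $s$-ary $\tau$ cannot cover both cases, and your justification of $\tau\subsetneq E_{k}^{s}$ is precisely where the argument would break. Leaving $\bar{\sigma}$ unspecified (``an auxiliary relation capturing a $\sigma$-like constraint'') papers over this: once you try to write it down, you will see the need for the case distinction and for passing to higher arities in Case B.
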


\begin{proof}
Suppose that  $C_{\rho}\cap C_{\sigma}=\emptyset$. Let $c\in
C_{\rho}$, then there exist $a_{2},\ldots,a_{s}\in E_{k}$ such that
$(c,a_{2},\ldots,a_{s})\not\in\sigma$. Since
$\theta_{s-1}=E_{k}^{s-1}$, there exists $u\in E_{k}$ such that
$(a_{2},\ldots,a_{s},u)\in\sigma$ and for all $2\leq
i_{1}<\ldots<i_{h-1}\leq s,
(a_{i_{1}},\ldots,a_{i_{h-1}},u)\in\rho$.

 Suppose that there exist $2\leq i_{1}<\ldots<i_{s-2}\leq s$ such that $(a_{i_{1}},\ldots,a_{i_{h-2}},c,u)\not\in~\sigma$.
Without loss of generality, we suppose that
$(c,u,a_{3},\ldots,a_{s})\not\in\sigma$. We set

$\gamma_{s}=\{(x_{1},\ldots,x_{s})\in\sigma: \forall
i_{1},\ldots,i_{h-1}\in\underline{s},
(x_{1},x_{i_{1}},\ldots,x_{i_{h-1}})\in\rho$ and

$ (x_{2},x_{i_{1}},\ldots, x_{i_{h-1}}) \in\rho \}$.

 We will show
that $\Pol\{\rho,\sigma\}\subsetneq \Pol\gamma_{s}\subsetneq
\Pol\rho$.
 Since $\gamma_{s}\in [\{\rho,\sigma\}]$, we have $\Pol\{\rho,\sigma\}\subseteq \Pol\gamma_{s}$.
 Let $f\in \Pol\gamma_{s}$ be an $n$-ary operation, $\mbox{\boldmath{$b$}}_{i}=(b_{i,1},\ldots,b_{i,h})\in\rho,1\leq i\leq n$,
 then $\mbox{\boldmath{$b$}}'_{i}=(b_{i,1},\ldots,b_{i,h},\ldots,b_{i,h})\in~\gamma_{s}$
and

$(f(b_{1,1},\ldots,b_{n,1}),\ldots,f(b_{1,h},
\ldots,b_{n,h}),\ldots,f(b_{1,h},\ldots,b_{n,h}))\in~\gamma_{s}$; by
the definition of $\gamma_{s}$, we deduce that
$(f(b_{1,1},\ldots,b_{n,1}),\ldots,f(b_{1,h},\cdot,b_{n,h}))\in~\rho$.
 So, $f\in \Pol\rho$ and $\Pol\gamma_{s}\subseteq \Pol\rho$.
 Therefore, $\Pol\{\rho,\sigma\}\subseteq \Pol\gamma_{s}\subseteq \Pol\rho$.
Let $(b_{1},\ldots,b_{h})\in E_{k}^{h}\setminus\rho$,
$W=\{(i_{1},\ldots,i_{h}): 1\leq i_{1}<\ldots<i_{h}\leq
s\}=\{(i_{1}^{j},\ldots,i_{h}^{j}): 1\leq j\leq q\}$ . For
$j=1,\ldots,q$ we set
$\mbox{\boldmath{$y$}}_{j}=(x_{j,1},\ldots,x_{j,s}), 1\leq j\leq q$,
where:
\[
x_{j,p}=
\begin{cases}
b_{l} & \text{if }p=i_{l}^{j}\text{ for some } 1\leq l\leq h,\\
b_{1} & \text{otherwise.}
\end{cases}
\]
For all $1\leq i\leq s$ we set
$\mbox{\boldmath{$x$}}_{i}=(x_{1,i},\ldots,x_{q,i})$. Consider the
$q$-ary operation $f$ defined by
\[
f(\mbox{\boldmath{$x$}})=
\begin{cases}
c & \text{if }\mbox{\boldmath{$x$}}=\mbox{\boldmath{$x$}}_{2}, \\
a_{i} & \text{if }\mbox{\boldmath{$x$}}=\mbox{\boldmath{$x$}}_{i}~\text{ for some }3\leq i\leq s, \\
u & \text{otherwise.}
\end{cases}
\]
$\sigma$ is totally reflexive and $s>h$; so by the construction of
$\mbox{\boldmath{$y$}}_{j}$ , we have
$\{\mbox{\boldmath{$y$}}_{1},\ldots,\mbox{\boldmath{$y$}}_{q}\}\subseteq\sigma$
and
 $f(\mbox{\boldmath{$y$}}_{1},\ldots,\mbox{\boldmath{$y$}}_{q})=(f(\mbox{\boldmath{$x$}}_{1}),\ldots,f(\mbox{\boldmath{$x$}}_{s}))=(u,c,a_{3},\ldots,a_{s})\not\in\sigma$,
so $f\not\in \Pol\sigma$. Let
$(b_{j,1},\ldots,b_{j,s})\in\gamma_{s}, 1\leq j\leq q$ and
 $\mbox{\boldmath{$d$}}_{i}=(b_{1,i},\ldots,b_{q,i}),1\leq i\leq s$. We will show
 that
 $(f(\mbox{\boldmath{$d$}}_{1}),\ldots,f(\mbox{\boldmath{$d$}}_{s}))\in\gamma_{s}$.
 By construction of $\mbox{\boldmath{$d$}}_{i}$, for all $1\leq i_{1}<\ldots<i_{h-1}\leq s$, we
 have
 $(\mbox{\boldmath{$d$}}_{1},\mbox{\boldmath{$d$}}_{i_{1}},\ldots,\mbox{\boldmath{$d$}}_{i_{h-1}})$,
 $(\mbox{\boldmath{$d$}}_{2},\mbox{\boldmath{$d$}}_{i_{1}},\ldots,\mbox{\boldmath{$d$}}_{i_{h-1}})\in~\rho$.
Let $i\in\{1,2\},i_{1},\ldots,i_{h-1}\in\{1,\ldots,s\}$. Again from
the construction of $\mbox{\boldmath{$d$}}_{p}, 1\leq p\leq s$, we
have also
$(\mbox{\boldmath{$d$}}_{i},\mbox{\boldmath{$d$}}_{i_{1}},\ldots,\mbox{\boldmath{$d$}}_{i_{h-1}})\in~\rho$.
If
$\Card(\{\mbox{\boldmath{$d$}}_{i},\mbox{\boldmath{$d$}}_{i_{1}},\ldots,\mbox{\boldmath{$d$}}_{i_{h-1}}\})\leq
h-1$, then
$(f(\mbox{\boldmath{$d$}}_{i}),f(\mbox{\boldmath{$d$}}_{i_{1}}),\ldots,f(\mbox{\boldmath{$d$}}_{i_{h-1}}))\in\rho$
because $\rho$ is totally reflexive. Otherwise,
$\Card(\{\mbox{\boldmath{$d$}}_{i},\mbox{\boldmath{$d$}}_{i_{1}},\ldots,\mbox{\boldmath{$d$}}_{i_{h-1}}\})=
h$, then by the construction of $\mbox{\boldmath{$x$}}_{p},1\leq
p\leq s$, there exists $j\in\{1,\ldots,h\}$ such that
$f(\mbox{\boldmath{$d$}}_{i_{j}})=u$ and
$(f(\mbox{\boldmath{$d$}}_{i}),f(\mbox{\boldmath{$d$}}_{i_{1}}),\ldots,f(\mbox{\boldmath{$d$}}_{i_{h-1}}))\in\rho$
because
$\{f(\mbox{\boldmath{$d$}}_{i}),f(\mbox{\boldmath{$d$}}_{i_{1}}),\ldots,f(\mbox{\boldmath{$d$}}_{i_{j-1}}),f(\mbox{\boldmath{$d$}}_{i_{j+1}}),\ldots,
f(\mbox{\boldmath{$d$}}_{i_{h-1}})\}\subseteq\{c,a_{3},\ldots,a_{s}\}$.
It remains to show that
$(f(\mbox{\boldmath{$d$}}_{1}),\ldots,f(\mbox{\boldmath{$d$}}_{s}))\in\sigma$.
If
$\Card(\{\mbox{\boldmath{$d$}}_{1},\ldots,\mbox{\boldmath{$d$}}_{s}\})\leq
s-1$, then
$(f(\mbox{\boldmath{$d$}}_{1}),\ldots,f(\mbox{\boldmath{$d$}}_{s}))\in\sigma$,
because $\sigma$ is totally reflexive. Otherwise
$\Card(\{\mbox{\boldmath{$d$}}_{1},\ldots,\mbox{\boldmath{$d$}}_{s}
\})=s$. Suppose that
$\Card(\{f(\mbox{\boldmath{$d$}}_{1}),\ldots,f(\mbox{\boldmath{$d$}}_{s})\})=s$,
then
$\{f(\mbox{\boldmath{$d$}}_{1}),\ldots,f(\mbox{\boldmath{$d$}}_{s})\}=\{u,c,a_{3},$\\
$\ldots,a_{s}\}$
 and we have the following two cases:

(1)
$\{\mbox{\boldmath{$d$}}_{1},\ldots,\mbox{\boldmath{$d$}}_{s}\}=\{\mbox{\boldmath{$x$}}_{2},\ldots,\mbox{\boldmath{$x$}}_{s}\}\cup\{\mbox{\boldmath{$x$}}\},
\mbox{\boldmath{$x$}}\not\in
\{\mbox{\boldmath{$x$}}_{1},\ldots,\mbox{\boldmath{$x$}}_{s}\}$
 and (2) $\{\mbox{\boldmath{$d$}}_{1},\ldots,\mbox{\boldmath{$d$}}_{s}\}=\{\mbox{\boldmath{$x$}}_{1},\ldots,\mbox{\boldmath{$x$}}_{s}\}$.

If
$\{\mbox{\boldmath{$d$}}_{1},\ldots,\mbox{\boldmath{$d$}}_{s}\}=\{\mbox{\boldmath{$x$}}_{1},\ldots,\mbox{\boldmath{$x$}}_{s}\}$,
then
 $(\mbox{\boldmath{$x$}}_{i_{1}},\ldots,\mbox{\boldmath{$x$}}_{i_{h}})=(\mbox{\boldmath{$d$}}_{1},\ldots,\mbox{\boldmath{$d$}}_{h})\in\rho$,
 in contradiction with the construction of $\mbox{\boldmath{$x$}}_{i}$. So this case cannot occur.

If
$\{\mbox{\boldmath{$d$}}_{1},\ldots,\mbox{\boldmath{$d$}}_{s}\}=\{\mbox{\boldmath{$x$}}_{2},\ldots,\mbox{\boldmath{$x$}}_{s}\}\cup\{\mbox{\boldmath{$x$}}\},
\mbox{\boldmath{$x$}}\not\in\{\mbox{\boldmath{$x$}}_{1},
\ldots,\mbox{\boldmath{$x$}}_{s}\}$,
 then there exist $i_{1},\ldots,i_{h-1}\in\{2,\ldots,s\}$,
$\Card(\{ i_{1},\ldots,i_{h-1}\})=h-1$, such that
$(\mbox{\boldmath{$x$}}_{j_{1}},\ldots,\mbox{\boldmath{$x$}}_{j_{h}})=(\mbox{\boldmath{$d$}}_{1},\mbox{\boldmath{$d$}}_{i_{1}},\ldots,\mbox{\boldmath{$d$}}_{i_{h-1}})
$ in $\rho,1\leq j_{1}<\ldots<j_{h}\leq s$ or
$(\mbox{\boldmath{$x$}}_{j_{1}},\ldots,\mbox{\boldmath{$x$}}_{j_{h}})=(\mbox{\boldmath{$d$}}_{2},\mbox{\boldmath{$d$}}_{i_{1}},\ldots,
\mbox{\boldmath{$d$}}_{i_{h-1}}) \in\rho,1\leq
j_{1}<\ldots<j_{h}\leq s$,
 in contradiction with the construction of $\mbox{\boldmath{$x$}}_{i},1\leq i\leq s$.
So the case
$\Card(\{f(\mbox{\boldmath{$d$}}_{1}),\ldots,f(\mbox{\boldmath{$d$}}_{s})\})=s$
cannot occur. By the total reflexivity of $\sigma$ we deduce that
 $(f(\mbox{\boldmath{$d$}}_{1}),\ldots,f(\mbox{\boldmath{$d$}}_{s}))\in\sigma$ and $f\in \Pol\gamma_{s}$.
 Therefore, $\Pol\{\rho,\sigma\}\subsetneq \Pol\gamma_{s}$.
We will show that $\Pol\gamma_{s}\subsetneq \Pol\rho$. We consider
$\mbox{\boldmath{$y$}}_{1}=(u,u,a_{3},\ldots,a_{s}),\mbox{\boldmath{$y$}}_{2}=(u,c,a_{3},a_{3},a_{5},\ldots,a_{s})\in\gamma_{s}$
and the binary operation $f$ defined on $E_{k}$ by
\[
f(\mbox{\boldmath{$x$}})=
\begin{cases}
 c & \text{if }\mbox{\boldmath{$x$}}=(u,c), \\
 a_{4} & \text{if }\mbox{\boldmath{$x$}}=(a_{4},a_{3}),\\
 a_{i} & \text{if }\mbox{\boldmath{$x$}}=(a_{i},a_{i})\text{ for some }3\leq i\leq s, \\
 u & \text{otherwise.}
\end{cases}
\]
$\{\mbox{\boldmath{$y$}}_{1},\mbox{\boldmath{$y$}}_{2}\}\subseteq\gamma_{s}$
(due to the total reflexivity of $\sigma$, the properties satisfying
$u$ and $c\in C_{\rho}$) and
$f(\mbox{\boldmath{$y$}}_{1},\mbox{\boldmath{$y$}}_{2})=
 (f(u,u),f(u,c),f(a_{3},a_{3}),f(a_{4},a_{3}),f(a_{5},a_{5}),
\ldots,f(a_{s},a_{s}))$=
$(u,c,a_{3},\ldots,a_{s})\not\in\gamma_{s}$; so, $f\not\in
\Pol\gamma_{s}$. Let
$\mbox{\boldmath{$x$}}=(x_{1},\ldots,x_{h}),\mbox{\boldmath{$y$}}=(y_{1},\ldots,y_{h})\in\rho$.
We set $\mbox{\boldmath{$d$}}_{i}=(x_{i},y_{i}),1\leq i\leq h$. If
$\Card(\{\mbox{\boldmath{$d$}}_{1},\ldots,\mbox{\boldmath{$d$}}_{h}\})\leq
h-1$, then
 $(f(\mbox{\boldmath{$d$}}_{1}),\ldots,f(\mbox{\boldmath{$d$}}_{h}))\in\rho$ because $\rho$ is totally reflexive.
Otherwise
$\Card(\{\mbox{\boldmath{$d$}}_{1},\ldots,\mbox{\boldmath{$d$}}_{h}\})=h$.
 If there exists $i\in\{1,\ldots,h\}$ such that
 $\mbox{\boldmath{$d$}}_{i}=(x_{i},y_{i})\in\{(u,u),(u,c)\}$ or
 $\mbox{\boldmath{$d$}}_{i}\notin\{(u,u),(u,c),(a_{3},a_{3}),$\\
 $(a_{4},a_{3}),(a_{5},a_{5}),\ldots,(a_{s},a_{s})\}$
 with all $\mbox{\boldmath{$d$}}_{i}$ being pairwise distinct, then $f(\mbox{\boldmath{$d$}}_{i})\in\{u,c\}$ and
 $(f(\mbox{\boldmath{$d$}}_{1}),\ldots,f(\mbox{\boldmath{$d$}}_{h}))\in\rho$. The remaining case is when
 $\mbox{\boldmath{$d$}}_{i}\in\{(a_{3},a_{3}), (a_{4},a_{3}),$\\
 $(a_{5},a_{5}),\ldots, (a_{s},a_{s})\}$ for $1\leq i\leq h$. In
 this case,
 $f(\mbox{\boldmath{$d$}}_{i})=\pi_{1}^{(2)}(\mbox{\boldmath{$d$}}_{i})=x_{i}$ for $1\leq i\leq h$.
 Hence,
 $(f(\mbox{\boldmath{$d$}}_{1}),\ldots,f(\mbox{\boldmath{$d$}}_{h}))=(\pi_{1}^{(2)}(\mbox{\boldmath{$d$}}_{1}),\ldots,\pi_{1}^{(2)}(\mbox{\boldmath{$d$}}_{h}))=
 (x_{1},\ldots,x_{h})\in~\rho$.
 So, $f\in \Pol\rho$ and $\Pol\gamma_{s}\subsetneq \Pol\rho$.
 Therefore,
$\Pol\{\rho,\sigma\}\subsetneq \Pol\gamma_{s}\subsetneq \Pol\rho$,
and we obtain a contradiction with the assumptions of Proposition~
\ref{prop 4}.

Suppose that for all $2\leq i_{1}<\ldots<i_{s-2}\leq s,
(c,u,a_{i_{1}},\ldots,a_{i_{s-2}})\in\sigma$.
 For $t\geq s$ we set

$\gamma'_{t}=\{(x_{1},\ldots,x_{t})\in E_{k}^{t}:\exists v\in E_{k}:
\forall i_{1},\cdots,i_{s-1}\in \underline{s}
 \text{ }(x_{i_{1}},\ldots,x_{i_{h-1}},v)\in\rho,$\\
$(x_{i_{1}},\ldots,x_{i_{s-1}},v) \in~\sigma\}$.

 Since $C_{\rho}\cap C_{\sigma}=\emptyset$, we have $\gamma'_{k}\neq E_{k}^{k}$.
Let $n_{0}=\min\{j\geq s:\gamma'_{j}\neq E_{k}^{j}\}$. Therefore,
$n_{0}\geq s$, $\gamma'_{n_{0}}$ is totally
 reflexive and totally symmetric. We will show that
$\Pol\{\rho,\sigma\}\subsetneq
\Pol\{\rho,\gamma'_{n_{0}}\}\subsetneq \Pol\rho$. Since
$\gamma'_{n_{0}}\in[\{\rho,\sigma\}]$, we have
$\Pol\{\rho,\sigma\}\subseteq \Pol\{\rho,\gamma'_{n_{0}}\}\subseteq
\Pol\rho$.

Recall that $(c,a_{2},\ldots,a_{s})\notin\sigma$ and there is $u\in
E_{k}$ such that for all $2\leq i_{1}<\cdots< i_{s-2}\leq s$,
$(c,a_{i_{1}},\ldots,a_{i_{s-2}},u)$,
$(a_{2},\ldots,a_{s},u)\in\sigma$ and
$(a_{i_{1}},\ldots,a_{i_{h-1}},u)\in\rho$. Hence,
$(c,a_{2},\ldots,a_{s})\in\gamma'_{s}\setminus\sigma$. Let
$\omega\in C_{\sigma}$; we have
$\omega\notin\{c,a_{2},\ldots,a_{s}\}$,
$(\omega,a_{2},\ldots,a_{s})\in\sigma$ and $a_{2},\ldots,a_{s}$ are
pairwise distinct. Let us defined a unary operation $f$ by $f(x)=x$
if $x\in\{a_{2},\ldots,a_{s}\}$ and $f(x)=c$ if
$x\notin\{a_{2},\ldots,a_{s}\}$. We have
$(\omega,a_{2},\ldots,a_{s})\in\sigma$, but
$f(\omega,a_{2},\ldots,a_{s})=(c,a_{2},\ldots,a_{s})\notin\sigma$.
Therefore, $f\notin\Pol\sigma$.

If $n_{0}>s$, then $f\in\Pol\gamma'_{n_{0}}$ because
$\img(f)=\{c,a_{2},\ldots,a_{s}\}$ and $\gamma'_{n_{0}}$ is totally
reflexive. If $n_{0}=s$, then $f\in\Pol\gamma'_{s}$ because
$\img(f)=\{c,a_{2},\ldots,a_{s}\}$,
$(c,a_{2},\ldots,a_{s})\in\gamma'_{s}$ and $\gamma'_{s}$ is totally
reflexive and totally symmetric. Moreover, $f\in\Pol\rho$ and
$\Pol\{\rho,\sigma\}\varsubsetneq \Pol\{\rho,\gamma'_{s}\}$.

Let $(b_{1},\ldots,b_{h})\in E_{k}^{h}\setminus\rho$ and consider
$W_{h}^{s}=\{(i_{1},\ldots,i_{h}): 1\leq
i_{1}<i_{2}<\ldots<i_{h}\leq s\}$. For the reason of simpler
notations we set $W_{h}^{s}=\{(i_{1}^{j},
  \ldots,i_{h}^{j}):1\leq j\leq q\}$. We set $\mbox{\boldmath{$y$}}_{j}=(b_{j,1},\ldots,b_{j,n}),1\leq
j\leq q$, with
\[
b_{j,p}=
\begin{cases}
b_{l} & \text{if }p=i_{l}^{j}\text{ for some } 1\leq l\leq h, \\
b_{1} & \text{otherwise.}
\end{cases}
\]
From $\gamma'_{n_{0}}\subsetneq E_{k}^{n_{0}}$,  let
$(v_{1},\ldots,v_{n_{0}})\in E_{k}^{n_{0}}\setminus\gamma_{n_{0}}$.
 Let $\mbox{\boldmath{$x$}}_{i}=(b_{1,i},\ldots,b_{q,i}),1\leq i\leq n_{0}$, and $f$ be  the $q$-ary operation defined on $E_{k}$ by
\[
f(\mbox{\boldmath{$x$}})=
\begin{cases}
v_{i} & \text{if }\mbox{\boldmath{$x$}}=\mbox{\boldmath{$x$}}_{i}\text{ for some } 1\leq i\leq n_{0}, \\
c & \text{otherwise.}
\end{cases}
\]
$\{\mbox{\boldmath{$y$}}_{1},\ldots,\mbox{\boldmath{$y$}}_{q}\}\subseteq\gamma'_{n_{0}}$
and
$(f(\mbox{\boldmath{$x$}}_{1}),\ldots,f(\mbox{\boldmath{$x$}}_{n}))=(v_{1},\ldots,v_{n_{0}})
\not\in\gamma'_{n_{0}}$. So, $f\not\in \Pol\gamma'_{n_{0}}$. Using
the construction of $\mbox{\boldmath{$x$}}_{i}$ we can show that
$f\in \Pol\rho$.
 So, $\Pol\{\rho,\gamma'_{n_{0}}\}\subsetneq \Pol\rho$.
 Thus, $\Pol\{\rho,\sigma\}\subsetneq \Pol\{\rho,\gamma'_{n_{0}}\}\subsetneq \Pol\rho$.
\end{proof}

We have shown that under the assumptions of Proposition \ref{prop 4}
together with $\theta_{s-1}= E_{k}^{s-1}$, we have  $C_{\rho}\cap
C_{\sigma}\neq \emptyset$. Let
$\gamma'=\{(b_{1},\ldots,b_{s})\in\sigma:
(b_{1},\ldots,b_{h})\in\rho\}$ and recall that
$\lambda=\{(b_{1},\ldots,b_{s})\in E_{k}^{s}:
(b_{1},\ldots,b_{h})\in\rho\}$ from Theorem \ref{theo 1}.

\begin{lemma}\label{lem 30}
Under the assumptions of Proposition \ref{prop 4} and $C_{\rho}\cap
C_{\sigma}\neq\emptyset$, we obtain a relation of type V.
\end{lemma}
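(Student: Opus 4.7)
The goal is to establish $\lambda\subsetneq\sigma$, which combined with the arity hypothesis $2\leq h<s$ gives type~V. I would split this into showing $\lambda\neq\sigma$ (easy) and $\lambda\subseteq\sigma$ (the substantive part, proved by contradiction against the maximality of $\Pol\{\rho,\sigma\}$ in $\Pol\rho$).

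For the non-equality $\lambda\neq\sigma$, I would argue directly: since $C_\rho\subsetneq E_k$, pick $a\in E_k\setminus C_\rho$ together with $b_2,\ldots,b_h\in E_k$ such that $(a,b_2,\ldots,b_h)\notin\rho$. Choosing $c\in C_\rho\cap C_\sigma$ (nonempty by hypothesis), the tuple $(a,b_2,\ldots,b_h,c,\ldots,c)$ belongs to $\sigma$ by centrality of $c$ in $\sigma$ combined with total symmetry, yet fails to lie in $\lambda$ because its first $h$ coordinates are not in $\rho$. In particular $\sigma\not\subseteq\lambda$.

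To establish $\lambda\subseteq\sigma$, I would argue by contradiction using $\gamma'$ as an interposed invariant. Assume there exists $(a_1,\ldots,a_s)\in\lambda\setminus\sigma$; total reflexivity of $\sigma$ forces the $a_i$ to be pairwise distinct, and $c\in C_\sigma$ forces $c\notin\{a_1,\ldots,a_s\}$. Since $\gamma'\in[\{\rho,\sigma\}]$, there is the chain $\Pol\{\rho,\sigma\}\subseteq\Pol\{\rho,\gamma'\}\subseteq\Pol\rho$, and my plan is to show both inclusions are strict, contradicting maximality. For the lower inclusion $\Pol\{\rho,\sigma\}\subsetneq\Pol\{\rho,\gamma'\}$, I would construct a unary operation $f$ that maps a tuple of $\sigma\setminus\gamma'=\sigma\setminus\lambda$ (nonempty by the previous paragraph) to $(a_1,\ldots,a_s)$, thereby failing $\sigma$, while behaving as $c$ on the rest of $E_k$ so that the image of any $\gamma'$-tuple is forced back into $\gamma'$ via centrality. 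For the upper inclusion $\Pol\{\rho,\gamma'\}\subsetneq\Pol\rho$, the unary map $g$ defined by $g(c)=a_1$, $g(a_i)=a_i$ for $1\leq i\leq s$, and $g(x)=c$ elsewhere sends the $\gamma'$-tuple $(c,a_2,\ldots,a_s)$ (which lies in $\gamma'$ since $c\in C_\rho\cap C_\sigma$) to $(a_1,\ldots,a_s)\notin\sigma\supseteq\gamma'$.

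The main obstacle I anticipate is verifying that $f$ and $g$ genuinely preserve $\rho$: whenever the preimage of a $\rho$-tuple is fully contained in the special set where the operation acts non-centrally, the image lies in $\{a_1,\ldots,a_s\}$ and need not \emph{a priori} belong to $\rho$, since only $(a_1,\ldots,a_h)$ (and its permutations) is guaranteed to be in $\rho$ among $h$-subsets of $\{a_1,\ldots,a_s\}$. Overcoming this will require either a sharper initial choice of $(a_1,\ldots,a_s)$ that rules out unwanted $h$-subsets in $\rho$ (using total symmetry and reflexivity of $\rho$), or, in the spirit of Lemmas~\ref{lem 15} and~\ref{lem 29}, replacing the unary $g$ by a higher-arity operation supported on carefully constructed tuples $\mathbf{x}_1,\ldots,\mathbf{x}_{n}$, which gives finer control over which $h$-subsets of the image can arise from $\rho$-tuples and allows the remaining cases to be dispatched via centrality of the designated element $c\in C_\rho\cap C_\sigma$.
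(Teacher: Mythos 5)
Your high-level strategy coincides with the paper's: work with $\gamma'=\lambda\cap\sigma$, assume $\gamma'\subsetneq\lambda$ (equivalently $\lambda\not\subseteq\sigma$), and derive the chain $\Pol\{\rho,\sigma\}\subsetneq\Pol\gamma'\subsetneq\Pol\rho$ to contradict maximality; your explicit argument that $\sigma\not\subseteq\lambda$ (via a tuple $(a,b_2,\ldots,b_h,c,\ldots,c)$ with $c\in C_\rho\cap C_\sigma$) is correct and in fact supplies a detail the paper only asserts. However, there is a genuine gap: both separating operations you propose are unary, and neither works as stated. Your $g$ (with $g(c)=a_1$, $g(a_i)=a_i$, $g(x)=c$ otherwise) sends the $\rho$-tuple $(c,a_3,a_4,\ldots,a_{h+1})$ --- which lies in $\rho$ because $c\in C_\rho$ --- to $(a_1,a_3,\ldots,a_{h+1})$, and nothing guarantees this is in $\rho$, since among the $h$-subsets of $\{a_1,\ldots,a_s\}$ only $(a_1,\ldots,a_h)$ is known to be $\rho$-related. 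The same defect afflicts your $f$ for the lower inclusion: a $\gamma'$-tuple whose first $h$ entries all lie in the support of $f$ is mapped to a tuple whose first $h$ coordinates need not be in $\rho$, so preservation of $\gamma'$ (whose definition includes the $\lambda$-condition) is not secured; moreover, mapping a tuple of $\sigma\setminus\lambda$ onto the pairwise distinct $a_1,\ldots,a_s$ by a unary map requires that source tuple to have pairwise distinct entries, which your witness $(a,b_2,\ldots,b_h,c,\ldots,c)$ does not.

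You correctly diagnose this obstacle and point to the right remedy (higher-arity operations as in Lemmas~\ref{lem 15} and~\ref{lem 29}), but you do not carry it out, and that construction is precisely the substantive content of the paper's proof. The paper takes $(b_1,\ldots,b_h)\notin\rho$, builds the $q=\binom{s}{h}$ tuples $\mbox{\boldmath{$y$}}_j$ indexed by $W_h^s$ so that no $h$ of the columns $\mbox{\boldmath{$x$}}_1,\ldots,\mbox{\boldmath{$x$}}_s$ form a $\rho$-tuple, and defines a $q$-ary $f$ sending $\mbox{\boldmath{$x$}}_i\mapsto v_i$ and everything else to $c$; this forces at least one coordinate of the image of any genuine $\rho$- or $\gamma'$-input to equal the central element $c$, which is exactly what rescues preservation. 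For the upper inclusion it needs an even more delicate $(q+1)$-ary operation built on a mixed index set $W$ (combining positions from $\{1,\ldots,h\}$ and $\{h+1,\ldots,s\}$) together with an auxiliary column $\mbox{\boldmath{$y$}}_{q+1}=(c,c_2,\ldots,c_s)$, and a case analysis showing every $h$-subset of columns that could arise from a $\rho$-input either meets the ``default to $c$'' case or reproduces $(v_1,\ldots,v_h)\in\rho$. Until you supply constructions of this kind, the proof is incomplete at its hardest step.
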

\begin{proof}
Since $ \gamma'=\lambda\cap\sigma$, we have
$\gamma'\varsubsetneq\lambda$ or $\gamma'=\lambda$. Suppose that
$\gamma'\varsubsetneq\lambda$. Thus there is
$(v_{1},\ldots,v_{s})\in\lambda\setminus\gamma'$ i.e,
$(v_{1},\ldots,v_{h})\in\rho$ and
$(v_{1},\ldots,v_{s})\notin\sigma$. We will show that
$\Pol\{\rho,\sigma\}\varsubsetneq\Pol\gamma'\varsubsetneq\Pol\rho$.
Since $\gamma'\in[\{\rho,\sigma\}]$, we have
$\Pol\{\rho,\sigma\}\subseteq\Pol\gamma'\subseteq\Pol\rho$. Let us
show that $\Pol\{\rho,\sigma\}\varsubsetneq\Pol\gamma'$. We consider
$W_{h}^{s}=\{(i_{1},\ldots,i_{h}): 1\leq i_{1}<\ldots<i_{h}\leq s\}$
denoted for the reason of simpler notation by
$W_{h}^{s}=\{(i_{1}^{j},\ldots,i_{h}^{j}): 1\leq j\leq q\}$. Let us
choose $(b_{1},\ldots,b_{h})\in E_{k}^{h}\setminus\rho$. Since
$\rho$ is totally reflexive, we have $b_{l}\neq b_{l'}$ for $1\leq
l<l'\leq h$. Moreover, define
$\mbox{\boldmath{$y$}}_{j}=(b_{j,1},\ldots,b_{j,s}),1\leq j\leq q$,
with
\[
b_{j,p}=
\begin{cases}
b_{l} & \text{if }p=i_{l}^{j}\text{ for some }1\leq l\leq h,\\
b_{1} & \text{otherwise.}
\end{cases}
\]

We set $\mbox{\boldmath{$x$}}_{i}=(b_{1,i},\ldots,b_{q,i}),1\leq
i\leq s$, and note that
$\mbox{\boldmath{$x$}}_{i}\neq\mbox{\boldmath{$x$}}_{i'}$ for $1\leq
i<i'\leq s$ because $b_{1},\ldots,b_{h}$ are pairwise distinct.
Furthermore, we choose $c\in C_{\rho}\cap C_{\sigma}$. Consider the
$q$-ary operation $f$ defined on $E_{k}$ by
\[
f(\mbox{\boldmath{$x$}})=
\begin{cases}
v_{i} & \text{if }\mbox{\boldmath{$x$}}=\mbox{\boldmath{$x$}}_{i}\text{ for some }1\leq i\leq s,\\
c & \text{otherwise.}
\end{cases}
\]
The total reflexivity of $\sigma$ and $s>h$ imply that
$\{\mbox{\boldmath{$y$}}_{1},\ldots,\mbox{\boldmath{$y$}}_{q}\}\subseteq
\sigma$, furthermore
  $(f(\mbox{\boldmath{$x$}}_{1}),\ldots,f(\mbox{\boldmath{$x$}}_{s}))=(v_{1},\ldots,v_{s})\not\in\sigma$; so, $f\not\in \Pol\sigma$.
 Let $(a_{j,1},\ldots,a_{j,s})\in\gamma',1\leq j\leq q$.
We set $\mbox{\boldmath{$d$}}_{i}=(a_{1,i},\ldots,a_{q,i}),1\leq
i\leq s$.
 We will show that $(f(\mbox{\boldmath{$d$}}_{1}),\ldots,f(\mbox{\boldmath{$d$}}_{s}))\in\gamma'$.
If
$\Card(\{\mbox{\boldmath{$d$}}_{1},\ldots,\mbox{\boldmath{$d$}}_{h}\})\leq
h-1$, then the total reflexivity of $\rho$ and $\sigma$ implies that
 $(f(\mbox{\boldmath{$d$}}_{1}),\ldots,f(\mbox{\boldmath{$d$}}_{s}))\in\gamma'$. Otherwise,
 by the construction of $\mbox{\boldmath{$x$}}_{i},1\leq i\leq s$, there exists
 $i\in \underline{h}$ such that $f(\mbox{\boldmath{$d$}}_{i})=c$ and
 $(f(\mbox{\boldmath{$d$}}_{1}),\ldots,f(\mbox{\boldmath{$d$}}_{s}))\in\gamma'$.
So $f\in \Pol\gamma'$ and $\Pol\{\rho,\sigma\}\subsetneq
\Pol\gamma'$. Using $(b_{1},\ldots,b_{h})\in E_{k}^{h}\setminus\rho$
from above, we set $W=\{(i_{1},\ldots,i_{l}, j_{1},\ldots,j_{m}):
1\leq i_{1}<\ldots<i_{l}\leq h<h+1\leq j_{1}<\ldots<j_{m}\leq
s,l+m=h,1\leq l<h\}
=\{(i_{1}^{p},\ldots,i_{l_{p}}^{p},j_{1}^{p},\ldots,j_{m_{p}}^{p}):1\leq
p\leq q\}$. Let $c_{2},\ldots,c_{s}\in E_{k}$ such that
$\Card(\{c,c_{2},\ldots,c_{s}\})=s,
~\mbox{\boldmath{$y$}}_{p}=(b_{p,1},\ldots,b_{p,s}),1\leq p\leq q,
\mbox{\boldmath{$y$}}_{q+1}=(b_{q+1,1},\ldots,b_{q+1,s}):=(c,c_{2},\ldots,c_{s})$
and $\mbox{\boldmath{$x$}}_{i}=(b_{1,i},\ldots,b_{q+1,i}),1\leq
i\leq s$, where
\[
b_{p,l}=
\begin{cases}
b_{n} & \text{if }l=i_{n}^{p}\text{ for some }1\leq n\leq l_{p},\\
b_{l_{p}+n'} & \text{if }l=j_{n'}^{p}\text{ for some }1\leq n'\leq m_{p}, \\
b_{1} & \text{otherwise.}
\end{cases}
\]

$\mbox{\boldmath{$x$}}_{1},\ldots,\mbox{\boldmath{$x$}}_{s}$ are
pairwise distinct because of $\mbox{\boldmath{$y$}}_{q+1}$, thus
define the $(q+1)$-ary operation $f$ by
\[
f(\mbox{\boldmath{$x$}})=
\begin{cases}
v_{i} & \text{if }\mbox{\boldmath{$x$}}=\mbox{\boldmath{$x$}}_{i}\text{ for some }1\leq i\leq s, \\
c & \text{otherwise.}
\end{cases}
\]
 $\{\mbox{\boldmath{$y$}}_{1},\ldots,\mbox{\boldmath{$y$}}_{q},\mbox{\boldmath{$y$}}_{q+1}\}\subseteq \gamma'$ because for all $1\leq p\leq q+1$, there is $1\leq i<j\leq h
 $ such that $b_{p,i}=b_{p,j}=b_{1}$. Furthermore,
$f(\mbox{\boldmath{$y$}}_{1},\ldots,\mbox{\boldmath{$y$}}_{q+1})=(f(\mbox{\boldmath{$x$}}_{1}),\ldots,f(\mbox{\boldmath{$x$}}_{s}))=(v_{1},\ldots,v_{s})\not\in\gamma'$
 and $f\not\in \Pol\gamma'$.

  Let $(a_{j,1},\ldots,a_{j,h})\in\rho, 1\leq j\leq q+1$, and $\mbox{\boldmath{$d$}}_{i}=(a_{1,i},\ldots,a_{q+1,i})$ for $1\leq i\leq h$. We will show that
$(f(\mbox{\boldmath{$d$}}_{1}),\ldots,f(\mbox{\boldmath{$d$}}_{h}))\in\rho$.
If
$\Card\{\mbox{\boldmath{$d$}}_{1},\ldots,\mbox{\boldmath{$d$}}_{h}\})\leq
h-1$, then
$(f(\mbox{\boldmath{$d$}}_{1}),\ldots,f(\mbox{\boldmath{$d$}}_{h}))\in\rho$.
Otherwise
$\Card(\{\mbox{\boldmath{$d$}}_{1},\ldots,\mbox{\boldmath{$d$}}_{h}\})=h$
and we set
$L=\{\mbox{\boldmath{$d$}}_{1},\ldots,\mbox{\boldmath{$d$}}_{h}\}\cap\{\mbox{\boldmath{$x$}}_{1},\ldots,\mbox{\boldmath{$x$}}_{s}\}$.
 If $\Card(L) \leq h-1$, then there exists $i\in\underline{h}$ such
 that $f(\mbox{\boldmath{$d$}}_{i})=c$ and $(f(\mbox{\boldmath{$d$}}_{1}),\ldots,f(\mbox{\boldmath{$d$}}_{h}))\in\rho$.
Otherwise
$\{\mbox{\boldmath{$d$}}_{1},\ldots,\mbox{\boldmath{$d$}}_{h}\}=\{\mbox{\boldmath{$x$}}_{i_{1}},\ldots,\mbox{\boldmath{$x$}}_{i_{h}}\}$
for some $1\leq i_{1}<\ldots<i_{h}\leq s$.

If $\{i_{1},\ldots,i_{h}\}=\{1,\ldots,h\}$, then
$(f(\mbox{\boldmath{$d$}}_{1}),\ldots,f(\mbox{\boldmath{$d$}}_{h}))\in\rho$
because $(v_{1},\ldots,v_{h})\in\rho$.\\
If $\{i_{1},\ldots,i_{h}\}\subseteq\{h+1,\ldots,s\}$, then by the
construction
 of $\mbox{\boldmath{$x$}}_{i}$, for $l_{p}=1$ and $m_{p}=h-1$, there exist $j\in\{1,\ldots,q\}$
and $i_{0}\leq h$ such that
 $\{b_{j,i_{0}},b_{j,i_{2}},b_{j,i_{3}}\ldots,b_{j,i_{h}}\}=\{b_{1},\ldots,b_{h}\}$,
 $b_{j,i_{1}}=b_{1}$
and
$(\mbox{\boldmath{$d$}}_{1},\ldots,\mbox{\boldmath{$d$}}_{h})\in\rho$,
contradiction. Otherwise
$\{i_{1},\ldots,i_{h}\}=\{i_{1}^{p},\ldots,i_{l_{p}}^{p},j_{1}^{p},
\ldots,j_{m_{p}}^{p}\}, 1\leq i_{1}^{p}< \ldots <i_{l_{p}}^{p} \leq
h<h+1\leq j_{1}^{p} < \ldots<j_{m_{p}}^{p}\leq s$,
 then by permutation we have $(b_{1},\ldots,b_{h})\in\rho$, contradiction.
 So,
$f\in \Pol\rho$ and $\Pol\gamma'\subsetneq \Pol\rho$. Thus
$\Pol\{\rho,\sigma\}\subsetneq \Pol\gamma'\subsetneq \Pol\rho$,
contradicting the assumptions of Proposition \ref{prop 4}. Hence,
$\gamma'=\lambda$ and $\lambda\subsetneq\sigma$. Therefore we have a
relation of type~ V.
\end{proof}

We finish the completeness criterion  with the case $2\leq s<h$.
\begin{proposition}\label{prop 5}
Let $k\geq 3$, $\rho$ an $h$-ary central relation and $\sigma$ an
$s$-ary central relation on $E_{k}$ such that
 $2\leq s<h$.
If $\Pol\{\rho, \sigma\}$ is a maximal subclone of $\Pol\rho$, then
$\sigma$ is of type IV.
\end{proposition}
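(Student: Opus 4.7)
The plan is to mirror, in dual form, the argument of Proposition \ref{prop 4} (Lemmas \ref{lem 26}--\ref{lem 30}), exchanging the roles of $\rho$ and $\sigma$ to reflect that now $\sigma$ has the smaller arity. For every $s\leq t\leq h-1$ I would introduce the $t$-ary relation
\[
\theta_t=\{(a_1,\ldots,a_t)\in E_k^t : \exists u\in E_k,\ \forall\, 1\leq i_1<\cdots<i_{s-1}\leq t,\ (a_{i_1},\ldots,a_{i_{s-1}},u)\in\sigma\ \text{and}\ (a_1,\ldots,a_t,\underbrace{u,\ldots,u}_{(h-t)\text{ times}})\in\rho\},
\]
which lies in the relational clone $[\{\rho,\sigma\}]$ and is totally symmetric. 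By choosing $u=a_1$ and exploiting total symmetry and total reflexivity of $\rho$ and $\sigma$, one checks $\sigma\subseteq\theta_s\subseteq E_k^s$, and $\theta_s$ is totally reflexive. The proof then splits into the three subcases (5.1) $\theta_s=\sigma$, (5.2) $\sigma\subsetneq\theta_s\subsetneq E_k^s$, and (5.3) $\theta_s=E_k^s$.

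In case (5.1), Lemma \ref{lem e} applied with $\alpha:=\sigma$, $\beta:=\rho$, $n:=s$, $m:=h$ immediately yields $C_\rho\subseteq C_\sigma$, so $C_\rho\cap C_\sigma=C_\rho\neq\emptyset$ and $\sigma$ is of type IV. In case (5.2), $\theta_s$ is a central $s$-ary relation distinct from $\sigma$, hence $\Pol\theta_s$ and $\Pol\sigma$ are distinct maximal clones, and $\Pol\theta_s\neq\Pol\rho$ because they have different arities. An argument patterned on Lemma \ref{lem 27}, with the index set $W'=\{(i_1,\ldots,i_s):1\leq i_1<\cdots<i_s\leq h\}$ replacing the one used there, produces an operation $f\in\Pol\{\rho,\theta_s\}\setminus\Pol\sigma$ (built by lifting a witness $(u_1,\ldots,u_s)\in E_k^s\setminus\theta_s$ to vectors $\mbox{\boldmath{$x$}}_i\in E_k^{q}$ and mapping them to a fixed $(v_1,\ldots,v_s)\in E_k^s\setminus\sigma$, with $c\in C_\rho$ as default value), yielding the strict chain $\Pol\{\rho,\sigma\}\subsetneq\Pol\{\rho,\theta_s\}\subsetneq\Pol\rho$ and contradicting maximality.

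Case (5.3) requires two further steps, dual to Lemmas \ref{lem 28} and \ref{lem 29}. I would first argue by minimality that $\theta_{h-1}=E_k^{h-1}$: if $t_0$ is the least index in $\{s,\ldots,h-1\}$ with $\theta_{t_0}\neq E_k^{t_0}$, then $t_0>s$ and $\theta_{t_0}$ is a central relation (a tuple $(c,v_2,\ldots,v_{t_0})$ with $c\in C_\rho$ sits in $\theta_{t_0}$ by picking $u\in C_\sigma$). A unary operation fixing a chosen $(d_1,\ldots,d_{t_0})\notin\theta_{t_0}$ and sending everything else to $c$ belongs to $\Pol\{\rho,\theta_{t_0}\}\setminus\Pol\sigma$, again contradicting maximality. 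Second, assuming $\theta_{h-1}=E_k^{h-1}$ together with $C_\rho\cap C_\sigma=\emptyset$, I would construct auxiliary relations analogous to $\gamma_s$ and $\gamma'_{n_0}$ in Lemma \ref{lem 29}, using the same type of witness tuples: start from $\omega\in C_\rho\setminus C_\sigma$ and $(a_2,\ldots,a_h)$ with $(\omega,a_2,\ldots,a_h)\notin\sigma$, pick the $u$ furnished by $\theta_{h-1}=E_k^{h-1}$, and distinguish whether a suitable $s$-tuple formed from $\omega,u$ and the $a_i$'s lies in $\sigma$ or not. In each subcase the auxiliary relation will sit strictly between $\Pol\{\rho,\sigma\}$ and $\Pol\rho$, forcing a contradiction.

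The main obstacle is the last part of case (5.3): the construction of the intermediate clone when $C_\rho\cap C_\sigma=\emptyset$ is delicate because it requires dual versions of $\gamma_s$ and $\gamma'_{n_0}$ with the arities swapped, so the preservation verifications (that the candidate operation preserves $\rho$ and the new auxiliary relation but not $\sigma$) have to be redone through a finer case analysis on the cardinalities of $\{\mbox{\boldmath{$d$}}_{i_1},\ldots,\mbox{\boldmath{$d$}}_{i_h}\}$ and $\{\mbox{\boldmath{$d$}}_1,\ldots,\mbox{\boldmath{$d$}}_s\}$. Once this is cleared, combining cases (5.1) and (5.3) gives $C_\rho\cap C_\sigma\neq\emptyset$, which, together with the standing hypothesis $2\leq s<h$, is precisely condition (IV) of Theorem \ref{theo 1}.
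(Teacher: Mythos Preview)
Your overall architecture matches the paper exactly: the same relations $\theta_t$, the same three-way split on $\theta_s$, and the same two-step treatment of case (5.3). Case (5.1) via Lemma~\ref{lem e} is correct. However, two pieces of the dualization are off.

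In case (5.2) you propose to pattern the argument on Lemma~\ref{lem 27}, but that construction does not transpose cleanly: there the $\mathbf{y}_j$ are $s$-tuples (so that total reflexivity of $\sigma$, which has the \emph{larger} arity, puts them into $\sigma$), and one obtains $s$ column-vectors $\mathbf{x}_i$ which are then sent to $(v_1,\ldots,v_s)\notin\sigma$. With the arities swapped you would build $h$-tuples $\mathbf{y}_j$ and $h$ columns $\mathbf{x}_i$, but you only have $s<h$ target values $v_i$, and the $\mathbf{y}_j$ are the wrong length to test against $\sigma$. The paper avoids this entirely: since $s<h$, any unary map with image $\{v_1,\ldots,v_s\}$ (where $(v_1,\ldots,v_s)\in\theta_s\setminus\sigma$) automatically preserves the totally reflexive $h$-ary $\rho$ and the totally symmetric $\theta_s$, while sending $(\omega,v_2,\ldots,v_s)\in\sigma$ to $(v_1,\ldots,v_s)\notin\sigma$. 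So a one-line unary function replaces the whole $W'$ machinery.

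In the second step of case (5.3) you set $\omega\in C_\rho\setminus C_\sigma$ and look for $(\omega,a_2,\ldots,a_h)\notin\sigma$; but $\sigma$ is $s$-ary with $s<h$, so this is not even well-typed. The correct dual is to take $\omega\in C_\sigma$ (hence $\omega\notin C_\rho$ under the standing assumption $C_\rho\cap C_\sigma=\emptyset$) and a witness $(\omega,a_2,\ldots,a_h)\notin\rho$; the element $u$ supplied by $\theta_{h-1}=E_k^{h-1}$ then satisfies $(a_2,\ldots,a_h,u)\in\rho$ and $(a_{i_1},\ldots,a_{i_{s-1}},u)\in\sigma$. The ensuing dichotomy is whether $(\omega,u,a_{i_1},\ldots,a_{i_{h-2}})\in\rho$ for all index choices, not whether certain $s$-tuples lie in $\sigma$. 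The auxiliary relations are built from $\rho$-conditions augmented by $\sigma$-conditions (mirroring $\gamma_s$ and $\gamma'_{n_0}$ with the arities exchanged), and the separating operations use $c\in C_\rho$ as the default value. Once you straighten out which of $\rho,\sigma$ plays the ``outer'' and which the ``inner'' role at each step, the remainder of your sketch goes through.
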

The proof of Proposition \ref{prop 5} is discussed in Lemmas
\ref{lem 31}--\ref{lem 34}.
 For $s\leq t\leq h-1$ we set

$\theta_{t}=\{(a_{1},\ldots,a_{t})\in E_{k}^{t}:\exists u\in E_{k}:
\forall i_{1},\ldots,i_{s-1}\in \underline{t}:
(a_{i_{1}},\ldots,a_{i_{s-1}},u)\in\sigma\wedge
(a_{1},\ldots,a_{t},\underset{(h-t)\text{
times}}{\underbrace{u,\ldots,u}})\in\rho\}$.

 Clearly
$\sigma\subseteq\theta_{s}$, $\theta_{s}$ is totally reflexive and
totally symmetric, and we have the following three subcases:

(1) $\sigma=\theta_{s}$, (2) $\sigma\subsetneq\theta_{s}\subsetneq
E_{k}^{s}$ and (3) $\theta_{s}=E_{k}^{s}$.
\begin{lemma}\label{lem 31}
Under the assumptions of Proposition \ref{prop 5} and $\theta_{s}=\sigma$, we have a relation of type IV.
\end{lemma}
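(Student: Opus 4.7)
The plan is to derive Lemma~\ref{lem 31} as an immediate application of Lemma~\ref{lem e}, in complete parallel to the proof of Lemma~\ref{lem 26}. First I would verify that the relation $\theta_{s}$ introduced in Proposition~\ref{prop 5} is exactly the auxiliary relation $\alpha_{n}$ of Lemma~\ref{lem e} when one sets $\alpha := \sigma$ (so $n = s$) and $\beta := \rho$ (so $m = h$): the existential witness $u \in E_{k}$ is required simultaneously to satisfy $(a_{i_{1}},\ldots,a_{i_{s-1}},u) \in \sigma$ for every $(s-1)$-subtuple and $(a_{1},\ldots,a_{s},u,\ldots,u) \in \rho$ with $h-s = m-n$ copies of $u$, which matches the defining formula of $\alpha_{n}$ verbatim. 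Because the standing hypothesis is $2 \leq s < h$, the arity requirement $2 \leq n < m$ of Lemma~\ref{lem e} is also satisfied, and both $\sigma, \rho$ are central, as the lemma requires.

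Next I would invoke Lemma~\ref{lem e}: the current assumption $\theta_{s} = \sigma$ is precisely the equality $\alpha_{n} = \alpha$ used in that lemma's proof, so its conclusion $C_{\beta} \subseteq C_{\alpha}$ translates to $C_{\rho} \subseteq C_{\sigma}$ in the present notation. Since $\rho$ is a central relation its center $C_{\rho}$ is nonempty, whence $C_{\rho} \cap C_{\sigma} = C_{\rho} \neq \emptyset$. Combined with the hypothesis $2 \leq s < h$, this is exactly condition~(IV) of Theorem~\ref{theo 1}, so $\sigma$ is of type~IV. I do not expect any genuine obstacle: the combinatorial content is already packaged inside Lemma~\ref{lem e}, and what remains for Lemma~\ref{lem 31} is the bookkeeping that pairs the roles of $\alpha, \beta$ with $\sigma, \rho$ correctly, which here is simply the mirror image of the pairing used in Lemma~\ref{lem 26} (there $\sigma$ was the high-arity relation and gave $C_{\sigma} \subseteq C_{\rho}$; here $\rho$ is the high-arity relation and gives $C_{\rho} \subseteq C_{\sigma}$).
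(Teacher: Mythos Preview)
Your proposal is correct and is exactly the paper's approach: the paper's proof of Lemma~\ref{lem 31} consists of the single sentence ``It follows from Lemma~\ref{lem e},'' and you have correctly spelled out the identification $\alpha=\sigma$, $\beta=\rho$ (with $n=s<h=m$) and the hypothesis $\alpha_n=\alpha$ coming from $\theta_s=\sigma$. The only cosmetic discrepancy is that $\theta_s$ quantifies over all index tuples $i_1,\ldots,i_{s-1}\in\underline{s}$ while $\alpha_n$ in Lemma~\ref{lem e} quantifies over strictly increasing ones, but total symmetry and total reflexivity of $\sigma$ make the two formulations equivalent, so the match is genuine.
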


\begin{proof}
It follows from Lemma \ref{lem e}.
\end{proof}

\begin{lemma}\label{lem 32}
Under the assumptions of Proposition \ref{prop 5}, the case
 $\sigma\subsetneq \theta_{s}\subsetneq E_{k}^{s}$ is impossible.
\end{lemma}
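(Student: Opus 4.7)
The plan is to mirror the strategy of Lemma~\ref{lem 27} (its arity-swapped analogue) and contradict the maximality of $\Pol\{\rho,\sigma\}$ in $\Pol\rho$ by producing the chain $\Pol\{\rho,\sigma\}\subsetneq\Pol\{\rho,\theta_{s}\}\subsetneq\Pol\rho$.

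First I would verify that $\theta_{s}$ is totally reflexive, totally symmetric, and primitive positively definable from $\rho$ and $\sigma$, so $\theta_{s}\in[\{\rho,\sigma\}]$ and hence $\Pol\{\rho,\sigma\}\subseteq\Pol\{\rho,\theta_{s}\}\subseteq\Pol\rho$. Total symmetry follows from the universal clause over all $(s-1)$-subsets together with the total symmetry of $\rho$; for total reflexivity, given a tuple with a repeated coordinate one picks the witness $u$ equal to that repeated value, making each required $\sigma$-tuple and the $\rho$-tuple contain a repetition, so they belong to $\sigma$ and $\rho$ by their own total reflexivity (the clause for $\rho$ uses $h>s$).

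Next I would establish $\Pol\{\rho,\sigma\}\subsetneq\Pol\{\rho,\theta_{s}\}$ by building an operation in $\Pol\{\rho,\theta_{s}\}\setminus\Pol\sigma$. Fix $(a_{1},\ldots,a_{s})\in\theta_{s}\setminus\sigma$; total reflexivity of $\sigma$ forces the $a_{i}$ pairwise distinct, and pick a witness $u$ for $(a_{1},\ldots,a_{s})\in\theta_{s}$. A short argument shows $u\notin\{a_{1},\ldots,a_{s}\}$: otherwise, after a harmless reindexing $u=a_{1}$, and the clause $(a_{2},\ldots,a_{s},u)\in\sigma$ would, by total symmetry of $\sigma$, yield $(a_{1},\ldots,a_{s})\in\sigma$, a contradiction. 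Now define the unary $g$ by $g(a_{i})=a_{i}$ for $1\le i\le s$ and $g(x)=a_{s}$ otherwise. Its image $\{a_{1},\ldots,a_{s}\}$ has size $s<h$, so every $h$-tuple over the image has a repeated entry and $g\in\Pol\rho$ by total reflexivity. For $\theta_{s}$, any $s$-tuple over $\img(g)$ is either a permutation of $(a_{1},\ldots,a_{s})\in\theta_{s}$ (use total symmetry) or has a repetition (use total reflexivity), giving $g\in\Pol\theta_{s}$. Finally $(a_{1},\ldots,a_{s-1},u)\in\sigma$ is mapped by $g$ to $(a_{1},\ldots,a_{s})\notin\sigma$, so $g\notin\Pol\sigma$.

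The second strict inclusion $\Pol\{\rho,\theta_{s}\}\subsetneq\Pol\rho$ follows from exhibiting $f\in\Pol\rho\setminus\Pol\theta_{s}$. Choose $(v_{1},\ldots,v_{s})\in E_{k}^{s}\setminus\theta_{s}$ (pairwise distinct by total reflexivity of $\theta_{s}$), a tuple $(u_{1},\ldots,u_{s})\in\theta_{s}$ with pairwise distinct entries (available since $\sigma\subseteq\theta_{s}$ and $\sigma$ is central), and $c\in C_{\rho}$. Define $f$ by $f(u_{i})=v_{i}$ for $1\le i\le s$ and $f(x)=c$ otherwise. Then $f\notin\Pol\theta_{s}$ because $(u_{1},\ldots,u_{s})\in\theta_{s}$ is sent to $(v_{1},\ldots,v_{s})\notin\theta_{s}$. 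For $\rho$, the image of $f$ lies in $\{v_{1},\ldots,v_{s},c\}$; given $(x_{1},\ldots,x_{h})\in\rho$, either the image tuple has a repetition, in which case total reflexivity of $\rho$ applies, or all $h$ entries are distinct, which forces $h=s+1$ and the image to be a permutation of $(v_{1},\ldots,v_{s},c)\in\rho$ (by centrality of $c$ and total symmetry of $\rho$). Combining the two strict inclusions contradicts maximality.

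The main technical obstacle is the verification that $g$ preserves $\theta_{s}$ and that $f$ preserves $\rho$ in the ``all coordinates distinct'' regime; both succeed only because of the size gap $s<h$ and the availability of a central element $c\in C_{\rho}$. The one genuinely delicate argument is showing $u\notin\{a_{1},\ldots,a_{s}\}$, which is what transfers the information ``$(a_{1},\ldots,a_{s})\notin\sigma$'' back into the defining formula for $\theta_{s}$.
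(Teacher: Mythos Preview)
Your proof is correct and follows the same plan as the paper: build the chain $\Pol\{\rho,\sigma\}\subsetneq\Pol\{\rho,\theta_{s}\}\subsetneq\Pol\rho$ using unary separating operations with small image. The paper's implementation differs only in details: for the first strict inclusion it uses a central element $\omega\in C_{\sigma}$ (so $\omega\notin\{v_{1},\ldots,v_{s}\}$ is automatic) in place of your $\theta_{s}$-witness $u$, and for the second strict inclusion it observes that $\theta_{s}$ is itself a central relation and invokes that distinct central relations give distinct maximal clones, bypassing your explicit construction of $f$.
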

\begin{proof}
Let $\omega\in C_{\sigma}$. Since
$\sigma\varsubsetneq\theta_{s}\varsubsetneq E_{k}^{s}$, we have
$\theta_{s}$ is a central relation with $\omega\in C_{\theta_{s}}$
and there is $(u_{1},\ldots,u_{s})\in E_{k}^{s}\setminus\theta_{s}$.
There exists also $(v_{1},\ldots,
v_{s})\in\theta_{s}\setminus\sigma$; $v_{1},\ldots, v_{s}$ are
pairwise distinct and $\omega\notin\{v_{1},\ldots, v_{s}\}$ because
$\sigma$ is totally reflexive and $\omega\in C_{\sigma}$. Let us
define a unary operation on $E_{k}$ by $f(x)=x$ if
$x\in\{v_{2},\ldots,v_{s}\}$ and $f(x)=v_{1}$ if
$x\notin\{v_{2},\ldots,v_{s}\}$ . We have
$(\omega,v_{2},\ldots,v_{s})\in\sigma$ and
$(f(\omega),f(v_{2}),\ldots,f(v_{s}))=(v_{1},\ldots,
v_{s})\notin\sigma$. Therefore, $f\notin\Pol\sigma$. However $f$ is
in $\Pol\theta_{s}$ because $(v_{1},\ldots,v_{s})\in\theta_{s}$,
$\theta_{s}$ is totally reflexive, and
$\img(f)=\{v_{1},\ldots,v_{s}\}$. Since $s<h$ and $\rho$ totally
reflexive, we have $f\in\Pol\rho$. Hence,
$\Pol\{\rho,\sigma\}\varsubsetneq\Pol\{\rho,\theta_{s}\}$. It
remains to show that $\Pol\{\rho,\theta_{s}\}\varsubsetneq\Pol\rho$.
Moreover, $\rho$ and $\theta_{s}$ are two different central
relations. Hence, $\Pol\rho$ and $\Pol\theta_{s}$ are two different
maximal clones. Therefore,
$\Pol\{\rho,\theta_{s}\}\varsubsetneq\Pol\rho$, contradicting the
maximality of $\Pol\{\rho,\sigma\}$ in $\Pol\rho$.
\end{proof}

\begin{lemma}\label{lem 33}
Under the assumptions of Proposition \ref{prop 5} and $\theta_{s}=E_{k}^{s}$, we have $\theta_{h-1}=E_{k}^{h-1}$.
\end{lemma}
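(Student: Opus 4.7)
The plan is to argue by contradiction, patterned on Lemma~\ref{lem 28}. Suppose $\theta_{h-1} \neq E_{k}^{h-1}$ and pick a tuple $(d_{1},\ldots,d_{h-1}) \in E_{k}^{h-1} \setminus \theta_{h-1}$, together with $\omega \in C_{\sigma}$ and $c \in C_{\rho}$.

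The first task is to extract rigid structural information about $d_{1},\ldots,d_{h-1}$, $\omega$ and $c$. Substituting $u = \omega$ in the defining condition of $\theta_{h-1}$ makes the $\sigma$-clause hold (by centrality of $\omega$), so the $\rho$-clause must fail, giving $(d_{1},\ldots,d_{h-1},\omega) \notin \rho$. Total reflexivity of $\rho$ then forces the entries of this tuple to be pairwise distinct, so $d_{1},\ldots,d_{h-1}$ are distinct and $\omega \notin \{d_{1},\ldots,d_{h-1}\}$. Substituting $u = c$ instead makes the $\rho$-clause hold (by centrality of $c$), so the $\sigma$-clause must fail: there exist $i_{1},\ldots,i_{s-1} \in \underline{h-1}$ with $(d_{i_{1}},\ldots,d_{i_{s-1}},c) \notin \sigma$. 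In particular $c \notin C_{\sigma}$, hence $c \neq \omega$.

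Next, I would verify that $\theta_{h-1}$ is itself a central relation. Total symmetry is immediate from the definition, and total reflexivity follows by choosing $u = \omega$: the $\sigma$-clause holds because $\omega \in C_{\sigma}$, and the $\rho$-clause holds because a tuple with a repeated entry together with $\omega$ is in $\rho$ by total reflexivity. The same substitution $u = \omega$ shows $C_{\rho} \subseteq C_{\theta_{h-1}}$. Crucially, this forces $C_{\rho} \cap \{d_{1},\ldots,d_{h-1}\} = \emptyset$: otherwise some $d_{l}$ would lie in $C_{\theta_{h-1}}$, so $(d_{l},d_{1},\ldots,d_{l-1},d_{l+1},\ldots,d_{h-1}) \in \theta_{h-1}$, and total symmetry would contradict $(d_{1},\ldots,d_{h-1}) \notin \theta_{h-1}$. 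In particular the chosen $c$ satisfies $c \notin \{d_{1},\ldots,d_{h-1}\}$.

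The final step is to exhibit a separating function. Set $f(x) = x$ if $x \in \{d_{1},\ldots,d_{h-1}\}$ and $f(x) = c$ otherwise. Then the tuple $(d_{i_{1}},\ldots,d_{i_{s-1}},\omega) \in \sigma$ is mapped to $(d_{i_{1}},\ldots,d_{i_{s-1}},c) \notin \sigma$, so $f \notin \Pol\sigma$. A routine case analysis shows $f \in \Pol\rho$ and $f \in \Pol\theta_{h-1}$; the pivotal observation is that $h$ coordinates cannot all be distinct elements of the $(h-1)$-set $\{d_{1},\ldots,d_{h-1}\}$, so an image tuple for $\rho$ either repeats a value or contains $c \in C_{\rho}$, and similarly for $\theta_{h-1}$ using total symmetry together with $c \in C_{\theta_{h-1}}$. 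Since $\rho$ and $\theta_{h-1}$ have different arities, they are distinct central relations determining distinct maximal clones, so $\Pol\{\rho,\sigma\} \subsetneq \Pol\{\rho,\theta_{h-1}\} \subsetneq \Pol\rho$, contradicting the maximality of $\Pol\{\rho,\sigma\}$ in $\Pol\rho$. The main subtlety is the preservation of $\theta_{h-1}$ in the boundary case where exactly one coordinate of the input tuple lies outside $\{d_{1},\ldots,d_{h-1}\}$; it is handled precisely because $c \notin \{d_{1},\ldots,d_{h-1}\}$, which is what the centrality argument in the previous paragraph was designed to secure.
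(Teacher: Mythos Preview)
Your proof is correct and follows the same overall strategy as the paper: derive structural constraints on $d_{1},\ldots,d_{h-1},\omega,c$ from $(d_{1},\ldots,d_{h-1})\notin\theta_{h-1}$, then separate $\Pol\{\rho,\sigma\}$ from $\Pol\{\rho,\theta_{h-1}\}$ by a unary map sending $\omega\mapsto c$ while fixing enough of the $d_{i}$.

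There are, however, two tactical differences worth noting. First, the paper's separating function is smaller: it fixes only $d_{i_{1}},\ldots,d_{i_{s-1}}$ and sends everything else to $c$, so its image has at most $s$ elements. Preservation of both $\rho$ (arity $h>s$) and $\theta_{h-1}$ (arity $h-1>s$, since the paper first establishes via a least-$n$ argument that the first failure occurs only at $n=h-1$) is then immediate from total reflexivity, with no need for your detour through $C_{\rho}\subseteq C_{\theta_{h-1}}$ or the boundary-case analysis. Your larger function works too, but the verification is heavier. Second, for the inclusion $\Pol\{\rho,\theta_{h-1}\}\subsetneq\Pol\rho$ the paper builds an explicit witness (the retraction onto $\{d_{1},\ldots,d_{h-1}\}$, which sends $(c,d_{1},\ldots,d_{h-2})\in\theta_{h-1}$ to a permutation of $(d_{1},\ldots,d_{h-1})\notin\theta_{h-1}$), whereas you invoke the general fact that distinct central relations determine distinct maximal clones. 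Both routes are valid; the paper's is more self-contained, yours is shorter.

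One small remark: your deduction $c\notin\{d_{1},\ldots,d_{h-1}\}$ via $C_{\rho}\subseteq C_{\theta_{h-1}}$ is correct but unnecessarily indirect. Since $(d_{1},\ldots,d_{h-1},\omega)\notin\rho$ and $c\in C_{\rho}$, no coordinate of that tuple can equal $c$, giving $c\notin\{d_{1},\ldots,d_{h-1},\omega\}$ in one stroke.
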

\begin{proof}
Let $c\in C_{\rho}$ and $\omega\in C_{\sigma}$. Moreover $s<n\leq
h-1$ was chosen as the least index such that $\theta_{n}\neq
E_{k}^{n}$. So we can pick a tuple $(d_{1},\ldots,d_{n})\in
E_{k}^{n}\setminus\theta_{n}$. Because this tuple does not belong to
$\theta_{n}$ and $\omega\in C_{\sigma}$, it follows that
$(d_{1},\ldots,d_{n},\underset{(h-n)\text{
times}}{\underbrace{\omega,\ldots,\omega}})\notin\rho$. This implies
that $n=h-1$, $c\notin\{d_{1},\ldots,d_{h-1},\omega\}$,
$\omega\notin\{d_{1},\ldots,d_{h-1}\}$, and that $d_{i}\neq d_{j}$
for $1\leq i<j\leq h-1$. Moreover,
$(d_{1},\ldots,d_{h-1},c)\in\rho$; thus there exist $1\leq
i_{1}<i_{2}<\cdots<i_{s-1}\leq h-1$, such that
$(d_{i_{1}},\ldots,d_{i_{s-1}},c)\notin\sigma$. Let us define a
unary operation $f$ by $f(x)=x$ if
$x\in\{d_{i_{1}},\ldots,d_{i_{s-1}}\}$ and $f(x)=c$ if
$x\notin\{d_{i_{1}},\ldots,d_{i_{s-1}}\}$. Since $\omega\in
C_{\sigma}$, we have
$(d_{i_{1}},\ldots,d_{i_{s-1}},\omega)\in\sigma$, but $f$ maps this
tuple to $(d_{i_{1}},\ldots,d_{i_{s-1}},c)\notin\sigma$. Therefore,
$f\notin\Pol\sigma$. We shall demonstrate further that
$f\in\Pol\{\rho,\theta_{h-1}\}$.

Let $(x_{1},\ldots,x_{h})\in\rho$, then
$(f(x_{1}),\ldots,f(x_{h}))\in\rho$ because $h>s$,
$\{f(x_{1}),\ldots,f(x_{h})\}$ is a subset of
$\{d_{i_{1}},\ldots,d_{i_{s-1}},c\}$, and $\rho$ is totally
reflexive. A similar argument shows that $f\in\Pol\theta_{h-1}$ (due
to $h-1>s$).

To complete our proof we shall show that
$\Pol\{\rho,\theta_{h-1}\}\varsubsetneq\Pol\rho$. We have
$(d_{1},\ldots,d_{h-1})\notin\theta_{h-1}$ and $\omega\in
C_{\sigma}$. Taking $u=\omega$, we observe that
$(c,d_{1},\ldots,d_{h-2})\in \theta_{h-1}$. Let us define a unary
function $f$ by $f(x)=x$ if $x=d_{i}$, for some $1\leq i\leq h-1$,
and $f(x)=d_{h-1}$ otherwise. $f$ does not preserve $\theta_{h-1}$
because $(c,d_{1},\ldots,d_{h-2})\in \theta_{h-1}$, but
$(f(c),f(d_{1}),\ldots,f(d_{h-2}))=(d_{h-1},d_{1},\ldots,d_{h-2})\notin
\theta_{h-1}$(due to $\theta_{h-1}$ totally symmetric). Since $\rho$
is totally reflexive and $\img(f)=\{d_{1},\ldots,d_{h-1}\}$, we have
$f\in\Pol\rho$, contradicting the maximality of
$\Pol\{\rho,\sigma\}$ in $\Pol\rho$.
\end{proof}
\begin{lemma}\label{lem 34}
Under the assumptions of Proposition \ref{prop 5}  and $\theta_{h-1}=E_{k}^{h-1}$, we obtain a relation of type IV.
\end{lemma}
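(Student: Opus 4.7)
The plan is to argue by contradiction and mirror the blueprint of Lemma~\ref{lem 29}, with the roles of $\rho$ and $\sigma$ interchanged (the present regime is $s<h$ instead of $h<s$). I would assume $C_{\rho}\cap C_{\sigma}=\emptyset$ and fix $\omega\in C_{\sigma}$; since $\omega\notin C_{\rho}$, I can choose $a_{2},\ldots,a_{h}\in E_{k}$ with $(\omega,a_{2},\ldots,a_{h})\notin\rho$. Applying $\theta_{h-1}=E_{k}^{h-1}$ to $(a_{2},\ldots,a_{h})$ yields $u\in E_{k}$ (necessarily $u\neq\omega$) with $(a_{2},\ldots,a_{h},u)\in\rho$ and $(a_{i_{1}},\ldots,a_{i_{s-1}},u)\in\sigma$ for every $(s-1)$-subset $\{i_{1},\ldots,i_{s-1}\}\subseteq\{2,\ldots,h\}$.

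Next, I would split into two subcases paralleling Cases 1 and 2 of Lemma~\ref{lem 29}. In \textbf{Subcase A} (there exist indices $2\leq i_{1}<\cdots<i_{h-2}\leq h$ with $(\omega,u,a_{i_{1}},\ldots,a_{i_{h-2}})\notin\rho$; without loss of generality $(\omega,u,a_{3},\ldots,a_{h})\notin\rho$), I would introduce the $h$-ary auxiliary relation
\[
\gamma_{h}=\{(x_{1},\ldots,x_{h})\in\rho:\forall i_{1},\ldots,i_{s-1}\in\underline{h},\;(x_{1},x_{i_{1}},\ldots,x_{i_{s-1}}),(x_{2},x_{i_{1}},\ldots,x_{i_{s-1}})\in\sigma\},
\]
observe that $\gamma_{h}\in[\{\rho,\sigma\}]$, establish $\Pol\gamma_{h}\subseteq\Pol\rho$ by the duplicated-coordinate trick used in Lemma~\ref{lem 29}, and then employ the value-table technique of Lemmas~\ref{lem 12}, \ref{lem 13} and~\ref{lem 29} to build (i) a high-arity operation in $\Pol\gamma_{h}\setminus\Pol\sigma$ from some $(b_{1},\ldots,b_{h})\in E_{k}^{h}\setminus\rho$, and (ii) a low-arity operation in $\Pol\rho\setminus\Pol\gamma_{h}$ whose failure is witnessed by $(\omega,u,a_{3},\ldots,a_{h})\notin\rho$. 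The resulting strict chain $\Pol\{\rho,\sigma\}\subsetneq\Pol\gamma_{h}\subsetneq\Pol\rho$ would contradict maximality.

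In \textbf{Subcase B} (for every $(h-2)$-subset, $(\omega,u,a_{i_{1}},\ldots,a_{i_{h-2}})\in\rho$), I would define for $t\geq h$
\[
\theta'_{t}=\{(x_{1},\ldots,x_{t})\in E_{k}^{t}:\exists v\in E_{k},\;(x_{i_{1}},\ldots,x_{i_{s-1}},v)\in\sigma\text{ and }(x_{j_{1}},\ldots,x_{j_{h-1}},v)\in\rho\text{ for all relevant indices}\}.
\]
Because $C_{\rho}\cap C_{\sigma}=\emptyset$, $\theta'_{k}\neq E_{k}^{k}$; let $n_{0}\geq h$ be minimal with $\theta'_{n_{0}}\neq E_{k}^{n_{0}}$, so $\theta'_{n_{0}}$ is totally reflexive and totally symmetric. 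Picking $c\in C_{\rho}$ and $b_{2},\ldots,b_{s}\in E_{k}$ with $(c,b_{2},\ldots,b_{s})\notin\sigma$ (possible since $c\notin C_{\sigma}$), the unary map $f(x)=x$ on $\{b_{2},\ldots,b_{s}\}$ and $f(x)=c$ elsewhere will preserve $\rho$ and $\theta'_{n_{0}}$ (total reflexivity applied to the $s$-element image set, using $s<h\leq n_{0}$) but not $\sigma$, since $(\omega,b_{2},\ldots,b_{s})\in\sigma$ is sent to $(c,b_{2},\ldots,b_{s})\notin\sigma$. A second, higher-arity value-table operation modelled on Lemma~\ref{lem 29} (using the Subcase~B hypothesis to handle the combinatorics at the $\omega$- and $u$-positions) will supply an operation in $\Pol\rho\setminus\Pol\theta'_{n_{0}}$. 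Again $\Pol\{\rho,\sigma\}\subsetneq\Pol\{\rho,\theta'_{n_{0}}\}\subsetneq\Pol\rho$ would contradict maximality.

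The hard part will be the combinatorial bookkeeping for the higher-arity operations in both subcases: for every selection of $h$ columns of the constructed value-table, the resulting image tuple must lie in $\rho$, which is ensured either by a repeated coordinate (triggering total reflexivity) or by a coordinate sent to $c\in C_{\rho}$ (triggering centrality), with the Subcase~B hypothesis keeping the balance at the $\omega$- and $u$-positions. Since the analogous calculations are already handled in Lemmas~\ref{lem 12}, \ref{lem 13} and~\ref{lem 29}, I expect them to transpose to the present setting without fundamentally new ideas.
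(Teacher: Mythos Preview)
Your overall architecture---contradict $C_{\rho}\cap C_{\sigma}=\emptyset$, use $\theta_{h-1}=E_{k}^{h-1}$ to produce the auxiliary element $u$, and split on whether some $(\omega,u,a_{i_{1}},\ldots,a_{i_{h-2}})$ lies outside $\rho$---is exactly the paper's, and your Subcase~B coincides with the paper's first case: your $\theta'_{t}$ is the paper's $\theta_{t}$ for $t\geq h$, and the unary map with $s$-element image together with the standard value-table operation do the job.

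The gap is in Subcase~A. Your relation $\gamma_{h}\subseteq\rho$ has the \emph{same} arity $h$ as $\rho$, so the duplicated-coordinate trick of Lemma~\ref{lem 29} does not transpose: there the auxiliary relation $\gamma_{s}$ had arity $s>h$, and an $h$-tuple in $\rho$ could be padded to an $s$-tuple in $\gamma_{s}$, after which the $\rho$-constraints built into $\gamma_{s}$ recovered membership in $\rho$. Here there is no room to pad, and a generic $(b_{1},\ldots,b_{h})\in\rho$ need not satisfy your $\sigma$-conditions, so $\rho\not\subseteq\gamma_{h}$ and $\Pol\gamma_{h}\subseteq\Pol\rho$ is unavailable by that route. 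Your item~(ii) is also inconsistent as stated: if $f\in\Pol\rho$ and the input tuple lies in $\gamma_{h}\subseteq\rho$, then its image lies in $\rho$ and hence cannot be $(\omega,u,a_{3},\ldots,a_{h})\notin\rho$.

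The paper treats this subcase differently. It introduces the existentially quantified relation
\[
\gamma'_{h}=\{(b_{1},\ldots,b_{h}):\exists v\,\big[(b_{2},\ldots,b_{h},v)\in\rho\ \wedge\ \forall i_{1},\ldots,i_{s-1}\in\underline{h}\ (b_{i_{1}},\ldots,b_{i_{s-1}},v)\in\sigma\big]\},
\]
which is \emph{not} contained in $\rho$: taking $v=u$ shows $(\omega,u,a_{3},\ldots,a_{h})\in\gamma'_{h}\setminus\rho$. One then works with $\Pol\{\rho,\gamma'_{h}\}$ (so the inclusion in $\Pol\rho$ is automatic). If $\gamma'_{h}\neq E_{k}^{h}$, a unary map sending $\omega,u,a_{3},\ldots,a_{h}$ to a tuple outside $\gamma'_{h}$ (and everything else to $c\in C_{\rho}$) yields the strict upper inclusion, while the map with $s$-element image handles the lower one. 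If $\gamma'_{h}=E_{k}^{h}$, one must escalate, exactly as in Lemma~\ref{lem 13}, to relations $\gamma'_{n(h-1)+1}$ of growing arity and repeat the value-table construction there. This escalation step is absent from your plan and is where the remaining work in this subcase lies.
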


\begin{proof}
It suffices to show that $C_{\rho}\cap C_{\sigma}\neq\emptyset$.
Suppose that $C_{\rho}\cap C_{\sigma}=\emptyset$. Let $\omega\in
C_{\sigma}$, then there exist $a_{2},\ldots,a_{h}\in E_{k}$ such
that $(\omega,a_{2},\ldots,a_{h})\not\in\rho$.
$(a_{2},\ldots,a_{h})\in E_{k}^{h-1}=\theta_{h-1}$, then there
exists $u\in E_{k}$ such that $(a_{2},\ldots,a_{h},u)\in\rho$ and
for all $2\leq i_{1}<\ldots<i_{s-1}\leq h,
(a_{i_{1}},\ldots,a_{i_{s-1}},u)\in\sigma$.

 Suppose that $(\omega,a_{2},\ldots,a_{j},u,a_{j+1},\ldots,a_{h})\in\rho$
for all $j\in\{2,\ldots,h\}$. For all $h\leq t\leq k$ we set

 $\theta_{t}=\{(b_{1},\ldots,b_{t})\in E_{k}^{t}:\exists v\in E_{k}: \forall
i_{1},\ldots,i_{h-1}\in \underline{t},
(b_{i_{1}},\ldots,b_{i_{s-1}},v)\in\sigma,$\\
$(b_{i_{1}},\ldots,b_{i_{h-1}},v)\in~\rho\}$.

Taking $v=u$, we have
$(\omega,a_{2},\ldots,a_{h})\in\theta_{h}\setminus\rho$;
 we deduce from the total symmetry of $\rho$ and $\sigma$ that $\theta_{h}$ is both totally symmetric and totally reflexive.

Suppose that $\theta_{h}\neq E_{k}^{h}$. It is easy
 to check that $\Pol\{\rho,\sigma\}\subseteq \Pol\{\rho,\theta_{h}\}\subseteq \Pol\rho$.
Let $(b_{1},\ldots,b_{h})\in E_{k}^{h}\setminus\theta_{h}$, $c\in
C_{\rho}$ and $f_{1}$ the unary operation defined by
\[
f_{1}(x)=
\begin{cases}
b_{1} & \text{if }x=\omega, \\
b_{i} & \text{if }x=a_{i}\text{ for some }2\leq i\leq h,\\
c & \text{otherwise.}
\end{cases}
\]
$(\omega,a_{2},\ldots,a_{h})\not\in\rho$ and $c\in C_{\rho}$ imply
that
 $f_{1}\in \Pol\rho$. Moreover,
$(\omega,a_{2},\ldots,a_{h})\in\theta_{h}$, but
 $(f_{1}(\omega),f_{1}(a_{2}),\ldots,f_{1}(a_{h}))=(b_{1},\ldots,b_{h})\not\in\theta_{h}$;
hence, $f_{1}\not\in \Pol\theta_{h}$. Thus,
$\Pol\{\rho,\theta_{h}\}\subsetneq \Pol\rho$.

 Let
$(a'_{1},\ldots,a'_{s})\in E_{k}^{s}\setminus\sigma$,
$(u_{1},\ldots,u_{s})\in\sigma\setminus\iota_{k}^{s}$ and
 $f_{2}$ be the unary operation defined on $E_{k}$ by
\[
f_{2}(x)=
\begin{cases}
a'_{i} & \text{if }x=u_{i}\text{ for some }1\leq i\leq s, \\
a'_{1} & \text{otherwise.}
\end{cases}
\]

We have $(u_{1},\ldots,u_{s})\in\sigma$ and
$(f_{2}(u_{1}),\ldots,f_{2}(u_{s}))=(a'_{1},\ldots,a'_{s})\not\in\sigma$;
 so, $f_{2}\not\in \Pol\sigma$. From $h>s$,
  $\img(f_{2})=\{a'_{1},\ldots,a'_{s}\}$, $\theta_{h}$
and $\rho$ are totally reflexive, we conclude that  $f_{2}\in
\Pol\{\rho,\theta_{s}\}$. Hence,
 $\Pol\{\rho,\sigma\}\subsetneq \Pol\{\rho,\theta_{s}\}\subsetneq \Pol\rho$, contradicting the maximality of $\Pol\{\rho, \sigma\}$ in $\Pol\rho$.
 Therefore, $\theta_{h}=E_{k}^{h}$. We have $\theta_{k}\neq E_{k}^{k}$ (due to $C_{\rho}\cap C_{\sigma}=\emptyset$); let $n$ be the
least integer such that $\theta_{n}\neq E_{k}^{n}$, then $n> h$. We will show that
 $\Pol\{\rho,\sigma\}\subsetneq \Pol\{\rho,\theta_{n}\}\subsetneq \Pol\rho$.

 From $\theta_{n-1}=E_{k}^{n-1}$, total reflexivity and total symmetry of $\rho$ and
 $\sigma$, we have
  total reflexivity and total symmetry of $\theta_{n}$.
 It is easy to check that $\Pol\{\rho,\sigma\}\subseteq \Pol\{\rho,\theta_{n}\}\subseteq \Pol\rho$.
 $f_{2}\in \Pol\{\rho,\theta_{n}\}$ and $f_{2}\not\in \Pol\sigma$;
hence, $\Pol\{\rho,\sigma\}\subsetneq \Pol\{\rho,\theta_{n}\}$.
Since $\theta_{n}\neq E_{k}^{n}$,
 there exists $(v_{1},\ldots,v_{n})\in E_{k}^{n}\setminus\theta_{n}$; let $(b_{1},\ldots,b_{h})\in E_{k}^{h}\setminus\rho$ and
$W^{n}_{h}=\{(i_{1},\ldots,i_{h}):1\leq i_{1}<\ldots<i_{h}\leq n\}$,
denoted for reason of notation by
 $W_{h}^{n}=\{(i_{1}^{j},\ldots,i_{h}^{j}): 1\leq j\leq q\}$.
For all $1\leq j\leq q$, we set
$\mbox{\boldmath{$y$}}_{j}=(x_{j,1},\ldots,x_{j,n})$ where
\[
 x_{j,p}=
\begin{cases}
b_{l} & \text{if }p=i_{l}^{j}\text{ for some }1\leq l\leq h, \\
b_{1} & \text{otherwise.}
\end{cases}
\]

Let $\mbox{\boldmath{$x$}}_{i}=(x_{1,i},\ldots,x_{q,i}),1\leq i\leq
n$, $c\in C_{\rho}$, and
 $f_{3}$ the  $q$-ary operation defined on $E_{k}$ by
\[
f_{3}(\mbox{\boldmath{$x$}})=
\begin{cases}
v_{i} & \text{if }\mbox{\boldmath{$x$}}=\mbox{\boldmath{$x$}}_{i}\text{ for some }1\leq i\leq n, \\
c & \text{otherwise.}
\end{cases}
\]
By the total reflexivity of $\theta_{n}$, we have
 $\{ \mbox{\boldmath{$y$}}_{1},\ldots,\mbox{\boldmath{$y$}}_{q}\}\subseteq \theta_{n}$ and
$f(\mbox{\boldmath{$y$}}_{1},\ldots,\mbox{\boldmath{$y$}}_{q})=$\\
$(f_{3}(\mbox{\boldmath{$x$}}_{1}),\ldots,f_{3}(\mbox{\boldmath{$x$}}_{n}))=(v_{1},\ldots,v_{n})\not\in\theta_{n}$.
So, $f_{3}\not\in \Pol\theta_{n}$. Moreover, for all $1\leq
i_{1}<\ldots<i_{h}\leq n,
(\mbox{\boldmath{$x$}}_{i_{1}},\ldots,\mbox{\boldmath{$x$}}_{i_{h}})\not\in\rho$.
Using this fact, we can check that $f_{3}\in \Pol\rho$.
Consequently, $\Pol\{\rho,\theta_{n}\}\varsubsetneq\Pol\rho$.
 Thus, $\Pol\{\rho,\sigma\}\varsubsetneq \Pol\{\rho,\theta_{n}\}\varsubsetneq \Pol\rho$, contradicting the maximality of
$\Pol\{\rho, \sigma\}$ in $\Pol\rho$.

Now, suppose that there exist $2\leq i_{1}<\ldots<i_{h-2}\leq h$
such that $(\omega,u,a_{i_{1}},\ldots,a_{i_{h-2}})$ is not in
$\rho$. Without loss of generality we suppose that
$(\omega,u,a_{3},\ldots,a_{h})\not\in\rho$. We set

$\gamma'_{h}=\{(b_{1},\ldots,b_{h})\in E_{k}^{h}:\exists v\in E_{k}:
(b_{2}, \ldots,b_{h},v)\in\rho \wedge\forall i_{1},
\ldots,i_{s-1}\in \underline{h}\,
(b_{i_{1}},\ldots,b_{i_{s-1}},v)\in\sigma\}$.

 Since $(\omega,u,a_{3},\ldots,a_{h})\in\gamma'_{h}\setminus\rho$ (take $v=u$), we have the following two cases: (1) $\gamma'_{h}\neq E_{k}^{h}$
and (2) $\gamma'_{h}=E_{k}^{h}$.

First, suppose that $\gamma'_{h}\neq E_{k}^{h}$. It is easy to check
that
 $\Pol\{\rho,\sigma\}\subseteq \Pol\{\rho,\gamma'_{h}\}\subseteq
 \Pol\rho$. Clearly,
 $f_{2}\not\in \Pol\sigma$ and $f_{2}\in \Pol\{\rho,\gamma'_{h}\}$ where $f_{2}$ is the function defined above.
So, $\Pol\{\rho,\sigma\}\varsubsetneq \Pol\{\rho,\gamma'_{s}\}$.
 Let $(v_{1},\ldots,v_{h})\in E_{k}^{h}\setminus\gamma'_{h}$,
 $c\in C_{\rho}$; we define the unary operation $g$ by
\[
g(x)=
\begin{cases}
v_{1} & \text{if }x=\omega, \\
v_{2} & \text{if }x=u, \\
v_{i} & \text{if }x=a_{i}\text{ for some }3\leq i\leq h, \\
c & \text{otherwise.}
\end{cases}
\]
 $(\omega,u,a_{3},\ldots,a_{h})\in\gamma'_{h}$, but
$(g(\omega),g(u),g(a_{3}),\ldots,g(a_{h}))=(v_{1},\ldots,v_{h})\not\in\gamma'_{h}$,
so $g\not\in \Pol\gamma'_{h}$. Since $c\in C_{\rho},
(\omega,u,a_{3},\ldots,a_{s})\not\in\rho$ and $\rho$ is totally
reflexive, we have $g\in \Pol\rho$. Therefore,
  $\Pol\{\rho,\gamma'_{h}\}\varsubsetneq \Pol\rho$.
 Thus, $\Pol\{\rho,\sigma\}\varsubsetneq \Pol\{\rho,\gamma'_{h}\}\varsubsetneq \Pol\rho$, contradicting the maximality of
$\Pol\{\rho, \sigma\}$ in $\Pol\rho$. So, $\gamma'_{h}=E_{k}^{h}$.

Let $F=\{\{b_{1},\ldots,b_{h-1}\}\subseteq E_{k}:
\Card(\{b_{1},\ldots,b_{h-1}\})=h-1~\mbox{and}~\{b_{1},\ldots,b_{h-1}\}\cap
C_{\rho}=\emptyset\}$. Suppose that $m=\Card(F)$,
 for all $n\in\{1,\ldots,m\}$ we set

$\gamma'_{n(h-1)+1}=\{(b_{1},\ldots,b_{n(h-1)+1})\in
E_{k}^{n(h-1)+1}: \exists v\in E_{k}: \{(b_{2},\ldots,b_{h},v),$\\
$(b_{h+1},\ldots,b_{2h-1},v),\ldots,
(b_{(n-1)(h-1)+2},\ldots,b_{n(h-1)+1},v)\}\subseteq\rho\wedge
\{(b_{i_{1}},\ldots,b_{i_{s-1}},v):
i_{1},\ldots,i_{s-1}\in\{1,\ldots,n(h-1)+1\}\subseteq\sigma\}$.

Since $C_{\rho}\cap C_{\sigma}=\emptyset$, $\gamma'_{m(h-1)+1}\neq
E_{k}^{m(h-1)+1}$. Let $n_{0}=\min\{j\geq h: \gamma'_{j(h-1)+1}\neq
E_{k}^{j(h-1)+1}\}$; then $n_{0}(h-1)+1>h$. We will show that
$\Pol\{\rho,\sigma\}\varsubsetneq
\Pol\{\rho,\gamma'_{n_{0}(h-1)+1}\}\varsubsetneq \Pol\rho$. We have
$\Pol\{\rho,\sigma\}\subseteq
\Pol\{\rho,\gamma'_{n_{0}(h-1)+1}\}\subseteq \Pol\rho$. The
operation $f_{2}$ above, preserves $\rho$ and
$\gamma'_{n_{0}(h-1)+1}$, but $f_{2}\notin \Pol\sigma$. So
$\Pol\{\rho,\sigma\}\varsubsetneq
\Pol\{\rho,\gamma'_{n_{0}(h-1)+1}\}$.

Let $(v_{1},\ldots,v_{n_{0}(h-1)+1})\in
E_{k}^{n_{0}(h-1)+1}\setminus\gamma'_{n_{0}(h-1)+1}$ and
 $W=\{(i_{1},\ldots,i_{h}): 1\leq i_{1}<\ldots<i_{h}\leq n_{0}(h-1)+1\}$ denoted for the reason of notation by
$W=\{(i_{1}^{j},\ldots,i_{h}^{j}):1\leq j\leq q\}$. For all $1\leq
j\leq q$, set
$\mbox{\boldmath{$y$}}_{j}=(x_{j,1},\ldots,x_{j,n_{0}(h-1)+1})$ with
\[
x_{j,p}=
\begin{cases}
\omega & \text{if }p=i_{1}^{j}, \\
a_{l} & \text{if }p=i_{l}^{j}\text{ for some }3\leq l\leq h, \\
u & \text{otherwise.}
\end{cases}
\]
 where $u$ is a fixed element such that
$(a_{2},\ldots,a_{h},u)\in\rho$,
$(a_{i_{1}},\ldots,a_{i_{s-1}},u)\in\sigma$ for
$i_{1},\ldots,i_{s-1}\in\{2,\ldots,h\}$ and $\omega\in C_{\sigma}$.
From $(u,a_{3},\ldots,a_{h},\omega)\not\in\rho$, we have
$\Card(\{\omega,u,a_{3},\ldots,a_{h}\})=~h$. Let
$\mbox{\boldmath{$x$}}_{i}=(x_{1,i},\ldots,x_{q,i}),1 \leq i\leq
n_{0}(h-1)+1$. We can see that
 $(\mbox{\boldmath{$x$}}_{i_{1}},\ldots,\mbox{\boldmath{$x$}}_{i_{h}})\not\in\rho$ for all
$1\leq i_{1}<\ldots<i_{h}\leq n_{0}(h-1)+1$. Let
$(v_{1},\ldots,v_{n_{0}(h-1)+1})\in
E_{k}^{n_{0}(h-1)+1}\setminus~\gamma'_{n_{0}(h-1)+1}$; consider the
$q$-ary operation $f$ defined on $E_{k}$ by
\[
f(\mbox{\boldmath{$x$}})=
\begin{cases}
v_{i} & \text{if }\mbox{\boldmath{$x$}}=\mbox{\boldmath{$x$}}_{i}\text{ for some }1 \leq i\leq n_{0}(h-1)+1,\\
c & \text{otherwise.}
\end{cases}
\]
By construction,
$\{\mbox{\boldmath{$y$}}_{1},\ldots,\mbox{\boldmath{$y$}}_{q}\}\subseteq\gamma'_{n_{0}(h-1)+1}$;
furthermore,
 $f(\mbox{\boldmath{$y$}}_{1},\ldots,\mbox{\boldmath{$y$}}_{q})=(f(\mbox{\boldmath{$x$}}_{1}),$\\
 $\ldots,
 f(\mbox{\boldmath{$x$}}_{n_{0}(h-1)+1}))=(v_{1},\ldots,v_{n_{0}(h-1)+1})\not\in\gamma'_{n_{0}(h-1)+1}$;
  so, $f\not\in \Pol\gamma'_{n_{0}(h-1)+1}$.
Using total reflexivity and total symmetry of $\rho$, and the fact
that
$(\mbox{\boldmath{$x$}}_{i_{1}},\ldots,\mbox{\boldmath{$x$}}_{i_{h}})\not\in\rho$
for $1\leq i_{1}<,\cdots,<i_{h}\leq n_{0}(h-1)+1$,
 we can show that $f\in \Pol\rho$.
 So, $\Pol\{\rho,\gamma'_{n_{0}(h-1)+1}\}\varsubsetneq \Pol\rho$.
Thus,
$\Pol\{\rho,\sigma\}\varsubsetneq\Pol\{\rho,\gamma'_{n_{0}(h-1)+1}\}\varsubsetneq
\Pol\rho$, contradicting the maximality of $\Pol\{\rho, \sigma\}$ in
$\Pol\rho$.
\end{proof}

\begin{proof}[Proof (of Proposition \ref{prop 5})]
Combining the Lemmas \ref{lem 31}--\ref{lem 34} we have the proof of
Proposition~\ref{prop 5}.
\end{proof}

\subsection*{Acknowledgements}: The authors would like to express
their thanks to the referees for their comments and suggestions
which improved the paper.


\begin{thebibliography}{99}

\bibitem{Lau 1}
Lau, D.: Bestimmung der Ordnung maximaler Klassen von Funktionen der
$k$-wertigen Logik. Z. Math. Logik u. Grundl. Math. 24, 79-96
(1978).

\bibitem{Lau 2}
Lau, D.: Submaximale Klassen von $P_{3}$. J. Inf. Process. Cybern.
EIK 18, 4/5, 227-243 (1982).

\bibitem{Lau 3}
Lau, D.: Function Algebras on finite sets. A basic course on
many-valued logic and clone theory. Springer-Verlag, New York,
Heidelberg, Berlin (2006).

\bibitem{Post1}
Post, E.L.: Introduction to a general theory of elementary
propositions. Amer. J. Math. 43, 163-185 (1921).

\bibitem{Post2}
Post, E.L.: The two-valued iterative systems of mathematical logic.
Ann. Math. Studies, Vol.\,5. Princeton University Press (1941).

\bibitem{Rosenb 2}
 Rosenberg, I.G.: La structure des fonctions de plusieurs
variables sur un ensemble fini. C. R. Acad. Sci. Paris,
Ser.A--B~260, 3817-3819(1965).

\bibitem{Rosenb 1}
Rosenberg, I.G., Szendrei \'A.: Submaximal clones with a prime order
automorphism. Acta Sci. Math. 49, 29-48 (1985).

\bibitem{A. Szendrei}
Szendrei \'A.: Clones in universal Algebra. S\'eminaire de
math\'ematiques sup\'erieures,Vol.\,99. Les presses de
l'universit\'e de Montr\'eal, (1986).

\bibitem{temg 1}
Temgoua, E.R.A., Rosenberg I.G.: Binary central relations and
submaximal clones determined by nontrivial equivalence relations.
 Algebra universalis~67, 299-311(2012).

\bibitem{Temg 2}
Temgoua, E.R.A.: Meet-irreducible submaximal clones determined by
nontrivial equivalence relations. Algebra universalis~70, 175-196
(2013).

\end{thebibliography}
\end{document}